\newcommand{\mcp}{\mathcal{P}}
\newcommand{\C}{\mathcal{C}}
\newcommand{\E}{\mathcal{E}}
\newcommand{\sO}{\mathcal{O}}
\newcommand{\F}{\mathcal{F}}
\newcommand{\G}{\mathcal{G}}
\newcommand{\HH}{\mathcal{H}}
\newcommand{\rp}{\mathbb{R}}
\newcommand{\fp}{\mathbb{F}}
\newcommand{\cp}{\mathbb{C}}
\newcommand{\qp}{\mathbb{Q}}
\newcommand{\mcpp}{\mathbb{P}}
\newcommand{\np}{\mathbb{N}}
\newcommand{\lp}{\mathbb{L}}
\newcommand{\zp}{\mathbb{Z}}
\newcommand{\kp}{\mathbb{K}}
\newcommand{\ap}{\mathbb{A}}
\newcommand{\wt}{\widetilde}
\DeclareMathOperator{\prim}{prim}
\DeclareMathOperator{\ev}{ev}
\DeclareMathOperator{\lc}{lc}
\DeclareMathOperator{\Char}{Char}
\DeclareMathOperator{\coef}{coeff}
\DeclareMathOperator{\Res}{Res}
\DeclareMathOperator{\Disc}{Disc}
\DeclareMathOperator{\val}{val}
\DeclareMathOperator{\Aut}{Aut}
\DeclareMathOperator{\Card}{Card}
\DeclareMathOperator{\Vol}{Vol}
\DeclareMathOperator{\Conv}{Conv}
\DeclareMathOperator{\N}{N}
\DeclareMathOperator{\D}{D}
\DeclareMathOperator{\reb}{reb}
\DeclareMathOperator{\Supp}{Supp}
\DeclareMathOperator{\ff}{f}
\theoremstyle{plain}
\newtheorem{theorem}{Theorem}[section]
\newtheorem{lemma}[theorem]{Lemma}
\newtheorem{proposition}[theorem]{Proposition}
\newtheorem{corollary}[theorem]{Corollary}
\theoremstyle{definition}
\newtheorem{remark}[theorem]{Remark}
\newtheorem{definition}[theorem]{Definition}
\newtheorem{example}[theorem]{Example}
\begin{document}
\begin{frontmatter}

\title{Bivariate factorization using a critical fiber}

\author{Martin Weimann} 
\ead{weimann@unicaen.fr}

\address{LMNO, Universit\'e de Caen BP 5186, F 14032 Caen Cedex}


\begin{abstract}
We generalize the classical lifting and recombination scheme for rational and absolute factorization of bivariate polynomials to the case of a critical fiber. We explore different strategies for recombinations of the analytic factors, depending on the complexity of the ramification. We show that working along a critical fiber leads in some cases to a good theoretical complexity, due to the smaller number of analytic factors to recombine. We pay a particular attention to the case of polynomials that are non degenerate with respect to their $P$-adic Newton polytopes. 
\end{abstract}

\begin{keyword}
Bivariate polynomial, Factorization, Residues, Resultant, Valuation, Newton polytope, Algorithm, Complexity.
\end{keyword}

\end{frontmatter}

\section{Introduction}

Let $F\in \kp[x,y]$ be a square-free bivariate polynomial of bidegree $(d_x,d_y)$ defined over a field $\kp$. The lifting and recombination scheme for bivariate factorization consists to recombine the analytic factors in $\kp[[x]][y]$ of $F$ into the rational factors of $F$ over $\kp$. Up to our knowledge, this approach led to the best theoretical complexity for factoring dense bivariate polynomials, see \citep{L}. However, it has only been developed in the case when the fiber $x=0$ is regular, that is when $F(0,y)$ is separable of degree $d_y$. In this article, we generalize it  to the case of a critical (non regular) fiber, both for rational and absolute factorization issues. A first motivation for this work is that for fields with few elements, a regular fiber might not exist. Although working in a well chosen field extension can solve this problem \citep{gat}, this might have a prohibitive cost \citep{BM}. A second motivation is that a critical fiber brings new combinatorial constraints that might speed up the recombination process. In particular, the number of absolute analytic factors to recombine necessarily decreazes along a critical fiber, due do the presence of ramification. Our main result is the existence of a deterministic algorithm that, given the analytic factors of $F$ up to a certain precision $m$, returns the rational factors of $F$ in small polynomial time in the total degree. While the regular case requires a precision $m=d_x+1$ \citep{L}, polynomials with highly $x$-valuated discriminants might need a higher precision for solving recombinations with linear algebra. However, we show that in positive characteristic, the precision $d_x+1$ is always enough to compute the numbers of rational factors and that in zero characteristic the precision $2d_x$ is enough to test irreducibility. Moreover, we exhibate different combinatorial tricks that allow to solve recombinations with precision $d_x+1$ in many reasonnable situations (Subsection \ref{ss4.4}). The algorithms we develop here are  not intended to compete in general with actual implementations, but we illustrate on some examples that working along a critical fiber improves the complexity at least in some particular cases, especially for polynomials that are non degenerate or locally irreducible along the fiber. The strength of our results depends strongly on the complexity of analytic factorization, an issue we have not studied here. 

\paragraph*{Main result.}

The prime divisors of $F$ in the rings $\kp[[x]][y]$ and $\kp[x,y]$ are respectively called analytic and rational factors. The $n$-truncated analytic factorization of $F$ is the data of the residues modulo $x^{n+1}$ of the irreducible analytic factors of $F$. Although it is a fundamental step of our algorithm, we do not pay attention here to the analytic factorization and we introduce the notation $\C(n)=\C(n,F)$ for the number of arithmetic operations over $\kp$ required for computing the $n$-truncated analytic factorization of a polynomial $F\in \kp[x,y]$. When $x=0$ is a regular fiber, it's well known that $\C(n)\subset \wt{\mathcal{O}}(nd_y)$ thanks to the multifactor Hensel lifting \citep{GG}. In general, analytic factorization is more tricky and $\C(n)$ is expected to be closely related to the complexity of Puiseux series computation. Our main hypothesis on $F$ is the following:
$$
{\rm (H)\qquad  \textit{F \,is \,separable\, with \, respect\, to \,y.}}
$$
We can always reduce to hypothesis $(H)$ after applying a separable factorization algorithm. For fields with at least $d_x(2d_y+1)$ elements, the cost of computing separable factorization is  $\wt{\sO}(d_x d_y^2)$ by Proposition 8 in \citep{L2}. This is negligeable when compared to all complexity results we obtain here. Hence, hypothesis $(H)$ might be restrictive for us only for fields with few elements. We denote by:
\vskip2mm
\noindent
\begin{itemize}
\item $p$ the characteristic of $\kp$.
\item $s$ the number of irreducible analytic factors of $F$ in $\kp(x)[y]$. We thus have $s\le d_y$.
\item $q=\lfloor v/d \rfloor $ the integer part of the quotient of the $x$-adic valuation $v$ of the $y$-discriminant of $F$ with the minimal degree $d$ of the analytic factors. This complexity indicator $q$ will be refined in terms of the resultants and the discriminants of the analytic factors (see Section \ref{S3}). 
\item $\omega$ the universal matrix multiplication exponent ($2\le \omega\le 2.5$).
\end{itemize}
\vskip2mm
\noindent

In all of the sequel, we assume that fast mulplication of polynomials is used. Hence two polynomials in $\kp[y]$ of degrees at most $d$ can be multiplied in softly linear time $\wt{\sO}(d)$.
\vskip2mm
\noindent
\begin{thm} \label{t2} Let $m:=\max(q,d_x+1)$. There exists a deterministic algorithm that, given $F\in \kp[x,y]$ satisfying hypothesis $(H)$, returns its irreducible rational factorization with at most

- $\mathcal{O}(md_ys^{\omega-1})+\C(m)$ arithmetic operations over $\kp$ if $p=0$ or $p>d_x(2d_y-1)$; 

- $\mathcal{O}(kmd_ys^{\omega-1})+\sO(k)\C(m)$ arithmetic operations over $\fp_p$ if $\kp=\fp_{p^k}$. 

\end{thm}

We have  $q\in\mathcal{O}(d_xd_y)$ under hypothesis $(H)$ and $q$ can reach this order of magnitude (Section \ref{S3}, Example \ref{ex1}). If the fiber is regular, then $q=0$ in which case our algorithm specializes to that of Lecerf \citep{L}, with a complexity $\mathcal{O}(d_x d_y^{\omega})$. For fields with at least $2d_y-3$ elements, we can always find a fiber over which $q\le d_x+1$, see Remark \ref{remfiber}. Over such a fiber, we get a complexity $\mathcal{O}(d_x d_y^{\omega})+\C(d_x)$. The only difference with Lecerf's algorithm is that we need to compute the truncated analytic factorization along a critical fiber, a difficulty that is compensed by an expected smaller number $s$ of analytic factors to recombine. It's an open question to know if $\C(d_x)\subset \mathcal{O}(d_x d_y^{\omega})$. 
An important case is that of \textit{non degenerate} polynomials, for which all edge polynomials of $F(x,y-\alpha)$ have simple roots for all $\alpha\in \mcpp^1_{\bar{\kp}}$. In that case, $q\le d_x$  and $s$ is strictly smaller to the total number of lattice points of all edges (see Section \ref{Spolytope} for details). In such a case, the analytic factorization reduces after some well chosen monomial change of variables to the classical Hensel lifting or Newton iteration strategies. A brute force complexity analysis leads in that case to $\C(d_x)\subset \sO(sd_xd_y^2)$ but we strongly believe that this result is not optimal.

\paragraph*{Example.} Suppose given two coprime positive numbers $a$ and $b$ and a field $\kp$ of characteristic zero or greater or equal to $2a+b$. Let 
$$
F(x,y)=(y^a+x^b+y^a x^b)(x^a y^b+1)((y-1)^a+x^b+x^b(y-1)^a)\in \kp[x,y].
$$
Then the curve $C\subset \mcpp^1\times \mcpp^1$ defined by $F$ intersects the line $x=0$ exactly at the points $(0,0), (0,1), (0,\infty)$. The Newton diagrams of $F$ at each of the three points are constituted of a unique segment with only two lattice points. Hence, $F$ is necessarily non degenerate and locally irreducible at each point.  In particular, the analytic factors are coprime modulo $x$, and the Hensel lifting strategy leads to $\C(d_x)=\sO(d_xd_y)$. Hence, the all rational factorization requires $\sO(ab)=\sO(d_xd_y)$ operations over $\kp$ which has to be compared to the classical complexity bounds inherent to the choice of a regular fiber, namely $\sO(d_xd_ys^{\omega-1})$ with $s$ the number of rational places over a regular fiber \cite{L}. Of course, the two complexities will be close as soon as $s$ is small. The difference will be more remarkable in the absolute case for which a regular fiber imposes $s=d_y$ (see here after). In that example, we solve recombinations in the absolute case within $\sO(d_xd_y)$ arithmetic operations over $\kp$, while working over a regular fiber would lead to $\sO(d_xd_y^{\omega})$ operations over $\kp$ \citep{CL}.

\paragraph*{Locally irreducible polynomials.} This example motivates to introduce an important class of polynomials for which our approach leads to a good complexity.  We say that $F$ is locally irreducible along the line $x=0$ (resp. absolutely locally irreducible) if the germs of curves $(C,P)\subset (\mcpp_{\kp}^2,P)$ defined by $F$ are irreducible over $\kp$ (resp. over $\bar{\kp}$) at each rational place $P$ of the line $x=0$, including the place at infinity. For example, $F$ is always locally irreducible along a regular fiber. The previous example is also such a polynomial.

\begin{thm} \label{tirr} 
There exists a deterministic algorithm that, given $F\in \kp[x,y]$ absolutely locally irreducible along $x=0$, returns its irreducible rational factorization with one factorization in $\kp[y]$ of degree at most $d_y$ plus

- $\mathcal{O}(d_xd_ys^{\omega-1})$ arithmetic operations over $\kp$ if $p=0$ or $p>d_x(2d_y-1)$.

- $\mathcal{O}(kd_xd_ys^{\omega-1})$
arithmetic operations over $\fp_p$ if $\kp=\fp_{p^k}$ and $p>d_y$. 

\noindent
In the second case, it's enough to suppose that $F$ is locally irreducible over $\kp$.
\end{thm}

Theorem \ref{tirr} is not a direct application of Theorem \ref{t2} since we can have $F$ locally irreducible with $q\approx d_x d_y$. This is for instance the case when the projective curve defined by $F$ is a rational curve with a unique place along $x=0$ and smooth outside this place. If $F$ is non degenerate, checking local irreducibility has a negligeable cost (Section \ref{Spolytope}). In general, this is more tricky. However, it has to be noticed that Abbhyankar developed in \cite{Ab} an algorithm for testing local irreducibility of a germ of curve that do not require blowing-ups or fractional power series (see also \cite{Cos} for a generalization to positive characteristic). The main ingredient is that of \textit{approximate roots} and the algorithm uses almost only resultant computations. Up to our knowledge, no complexity analysis have been done yet.

\paragraph*{Counting factors and testing irreducibility.}

If we rather pay attention to the number of factors, it turns out that we need a lower truncation order ($\sO(d_x)$ for fields of positive characteristic), leading to a better complexity. We say that $\kp$ is an \textit{absolute field} of $F$ if it contains the field of definition of the irreducible absolute factors of $F$, that is, if rational and absolute factorizations coincide (as in the previous example). 

\begin{thm} \label{t1} \begin{enumerate}
\item Suppose that $p=0$. Then we can test irreducibility of a polynomial $F\in \kp[x,y]$ satisfying hypothesis $(H)$ with one factorization in $\kp[y]$ of degree at most $d_y$ plus 
$$
\mathcal{O}(d_xd_ys^{\omega-1})+\C(2d_x) 
$$
arithmetic operations over $\kp$.
\item Suppose that $p>0$ or that $\kp$ is an absolute field of $F$. We can compute the number of rational factors of $F\in \kp[x,y]$ satisfying hypothesis $(H)$ with one factorization in $\kp[y]$ of degree at most $d_y$ plus 

- $\mathcal{O}(d_xd_ys^{\omega-1})+\C(d_x)$ operations over $\kp$ if $p=0$  or $p>d_x(2d_y-1)$.

- $\mathcal{O}(kd_xd_ys^{\omega-1})+\sO(k)\C(d_x)$ operations over $\fp_p$ if $\kp=\fp_{p^k}$. 
\end{enumerate}
\end{thm}

Note that we need not to suppose that $\kp$ is an absolute field of $F$ in the case of positive characteristic, leading in that case to a much stronger result. Roughly speaking, the underlying reason is that the Frobenius gives an efficient test for that an algebraic number in $\bar{\kp}$ lie in the subfield $\kp$ (Section \ref{S5}).

\paragraph*{Absolute factorization.} Finally, we apply our results to the problem of absolute factorization, that is factorization over $\bar{\kp}$. Note that rational factorization can be seen as a subroutine of absolute factorization. Given $F\in \kp[x,y]$ separable with respect to $y$, we represent the absolute factorization of $F$ as a family of pairs
$$
\{(P_1,q_1),\ldots,(P_t,q_t)\}
$$
where $q_j\in \kp[z]$ is separable, $P_j\in \kp[x,y,z]$ satisfies $\deg_z P_j < \deg q_j$, the bidegree of $P_j(x,y,\phi)$ is constant when $\phi$ runs over the roots of $q_j$  and 
$$
F(x,y)=\prod_{j=1}^t \prod_{q_j(\phi)=0} P_j(x,y,\phi) \in \bar{\kp}[x,y]
$$
is the irreducible factorization of $F$ in $\bar{\kp}[x,y]$. This representation is not unique. We have that $t$ is smaller or equal to the number $r$ of irreducible rational factors, with equality if and only if the $q_j$'s are irreducible.  In analogy to the rational case, we denote by $\bar{r}=\sum \deg q_j$ the number of irreducible absolute factors of $F$. 
We represent the absolute analytic factorization of $F$ in $\bar{\kp}[[x]][y]$ exactly in the same way, the ring $\kp[x]$ being replaced by $\kp[[x]]$ (Section \ref{Sabs}). We denote by $\bar{s}$ the number of irreducible analytic absolute factors of $F$, and we introduce $\bar{\C}(n)$ for the complexity of computing the $n$-truncated absolute analytic factorization of $F$.

\begin{thm}\label{abs1}
Suppose that $p=0$ or $p>d_x(2d_y-1)$ and let $m:=\max(q,d_x+1)$. There exists a deterministic algorithm that, given $F\in \kp[x,y]$ satisfying hypothesis $(H)$, returns its absolute factorization with at most
$$
\mathcal{O}(md_y\bar{s}^{\omega-1}+\bar{r}d_xd_y^2)+\bar{\C}(m)\subset \sO(d_xd_y^{\omega+1})+\bar{\C}(d_x d_y)
$$
arithmetic operations over $\kp$. We can take $m=d_x$ if $F$ is locally absolutely irreducible along the fiber $x=0$. 
\end{thm}

This result has to be compared to \citep{CL}, Proposition 12, where the authors get complexity $\mathcal{O}(d^{\omega+1}+\bar{r}d_x d_y^2)$ for absolute factorization, where $d$ is the total degree of $F$. Note that in contrast to Theorem \ref{t2}, we assume here that $\kp$ has cardinality greater or equal to $d_x(2d_y-1)$. If we only pay attention to the number of absolutely irreducible factors, we can avoid this hypothesis and we can deal with the only $(d_x+1)$-truncation order.
\vskip2mm
\noindent

\begin{thm}\label{abs2}
There exists a deterministic algorithm that, given $F\in \kp[x,y]$ satisfying hypothesis $(H)$, returns the number $\bar{r}$ of irreducible absolute factors of $F$ with at most 

- $\mathcal{O}(d_xd_y\bar{s}^{\omega-1})+\C(d_x)$ operations over $\kp$ if $p=0$  or $p>d_x(2d_y-1)$.

- $\mathcal{O}(kd_xd_y\bar{s}^{\omega-1})+\sO(k)\C(d_x)$ operations over $\fp_p$ if $\kp=\fp_{p^k}$.

\end{thm}

This result has to be compared to \citep{CL}, Proposition 12, where the authors get complexity $\mathcal{O}(d^{\omega+1})$ for computing the number of irreducible absolute factors. As mentionned already, the great advantage of our algorithm is that, when working over a regular fiber, the number of absolute analytic factors to recombine is always $d_y$, while working over critical fibers reduces this number to $\bar{s}\le d_y$. More precisely, if $e_i$ and $f_i$ stand respectively for the ramification indices and residue degrees of the $s$ rational places of $C$ over $\kp$, the difference beewteen $\bar{s}$ and $d_y$ is measured by the formulas 
$$
\bar{s}=\sum_{i=1}^r f_i \quad \le \quad d_y=\sum_{i=1}^r e_i f_i.
$$
Hence, the more ramified the fiber is, the more we gain during the recombination step. Of course, in counterpart, we have to perform analytic factorization along a critical fiber. 

\paragraph*{Example.}
Here is a very simple illustrating example. Suppose for instance that 
$$
F=(y^{a}+\sqrt 2 x^b +x^b y^a)(y^{a}-\sqrt 2 x^b +x^b y^a)
$$
for some coprime integers $a,b$. Then, the curve $F=0$ has only one rational place over $x=0$, with ramification index $e=a$ and residual degree $f=2$. Moreover, $F$ is non degenerate with respect to its Newton polytope. It follows in particular that $m=d_x+1$. After some monomial change of coordinates, we can apply an absolute Hensel lifting strategy which leads to $\bar{\mathcal{C}}(d_x)\subset \sO(d_x d_y)$.
Since both $\bar{s}$ and $\bar{r}$ are constant, it follows from Theorem \ref{abs1} and \ref{abs2} that we compute the number of absolute factors and the absolute factorization of $F$ with respective complexities $\sO(d_xd_y)$ and $\sO(d_xd_y^2)$, which have to be compared to the complexities $\sO(d_x d_y^{\omega})$ inherent to the choice of a regular fiber \citep{CL}. Of course, this is a very special example. In general, it would be really interesting to know both in practice and in theory when one approach is better than an other.

\paragraph*{Main line of the proofs.} 
The approach we propose to solve the problem of recombinations of analytic factors follows closely that of Lecerf \citep{L}. Namely, we use logarithmic derivatives in order to reduce a multiplicative recombination problem to an additive recombination problem. Then, a simple observation shows that we need to test if some rational function $G/F$ has all its residues $\rho_k$'s in the subfield $\bar{\kp}\subset \overline{\kp(x)}$, a problem that can be reduced to a divisibility test by $F$ for zero or big enough characteristic. Note that in contrast to \citep{L}, we do not make any assumption on the cardinality of the field so that we need to take care to the case when the leading coefficient of $F$ is not invertible in $\kp[[x]]$. For small positive characteristic, we test $x$-independance of the residues thanks to an $\fp_p$-linear operator introduced by Niederreiter for univariate factorization \citep{Nied} and extended to the bivariate case by Lecerf \citep{L}. Hence, linear algebra over $\fp_p$ appears, explaining that our complexity results are expressed only for finite fields when the characteristic is small.  When the fiber is regular, residues in $\bar{\kp}$ turn out to be  a sufficient condition for solving recombinations. Along a critical  fiber, this is not the case anymore. The basic idea is to introduce extra linear equations that depend on the higher truncated analytic factors. To this aim, we introduce the \textit{separability order} of $F$ which in the monic case,  coincides with the maximal $x$-valuation of $\partial_y F(x,\phi)$ when $\phi$ runs over the roots of $F$. We show that this integer gives an upper bound for the required precision. If we know moreover that the residues of $G/F$ lie in the subfield $\kp\subset \bar{\kp}$, we show that we can improve this upper bound. This is the kind of arguments that allows us to prove $(2)$ in Theorem \ref{t1}. Finally, we extend our results to the absolute case by using a Vandermonde matrix that allows to reduce  $\bar{\kp}$-linear algebra to $\kp$-linear algebra.

\paragraph*{Organization of the paper.}
In Section \ref{S2}, we introduce our main notations and we explain the recombination problem. In Section \ref{S3}, we solve the recombination problem along a critical fiber. In Section \ref{S4}, we pay attention to the subproblem of counting the number of factors and we give in particular an irreducibility test. We discuss moreover some combinatorial approaches for solving recombinations of some so-called reasonnably ramified polynomials. In Section \ref{S5}, we give explicit equations for constant residues, mainly following \citep{L}. In Section \ref{S6}, we develop the algorithms underlying Theorem \ref{t2} and \ref{t1} and we study their complexities. We consider the case of locally irreducible polynomials and we prove Theorem \ref{tirr} in Subsection \ref{sslocirr}. In Section \ref{Sabs}, we pay attention to absolute factorization and we prove theorems \ref{abs1} and \ref{abs2}. In Section \ref{Spolytope}, we consider the case of non degenerate polynomials with respect to their $P$-adic Newton polytopes. We conclude in Section \ref{S8}.

\section{Factorization, recombinations, residues.}\label{S2}

We explain here the strategy developed by Lecerf in \citep{L} for solving recombinations in the regular case, and we show that some problems occur when working along a critical fiber. For convenience to the reader, we tried to follow the notations of \citep{L}. In all of this section, we suppose that $F$ is primitive with respect to $y$, a situation that can be reached with a negligeable cost for our purpose. For convenience, we only pay attention to rational factorization, the absolute case being treated separately in Section \ref{Sabs}.

\subsection{The recombination problem}\label{ss2.1}

We normalize $F$ by requiring that its leading coefficient with respect to $y$ has its first non zero coefficient equal to $1$. The polynomial $F$ thus admits a unique rational factorization
\begin{equation}\label{factorat}
F=F_1\cdots F_r \in \kp[x,y],
\end{equation}
where each $F_j\in \kp[x,y]$ is irreducible, with leading coefficient with first non zero coefficient equal to $1$. On another hand, $F$ admits a unique analytic factorization of the form
\begin{equation}\label{factoan}
F=u\F_1\cdots \F_s \in \kp[[x]][y]
\end{equation}
where the $\F_i\in \kp[[x]][y]$ are irreducible with leading coefficient $x^{n_i}$, $n_i\in \np$ and $u\in \kp[x]$, $u(0)\ne 0$. Hence, each rational factor $F_j$ has a unique normalized factorization
\begin{equation}\label{prod}
F_j=c_j \F_1^{v_{j1}}\cdots \F_r^{v_{jr}}, \,\quad j=1,\ldots,r.
\end{equation}
for some polynomial $c_j\in \kp[x]$, $c_j(0)=1$.
The recombination problem consists to compute the exponent vectors 
$$
v_j=(v_{j1},\ldots,v_{jr})\in \np^{r}
$$
for all $j=1,\ldots,r$. Then, the computation of the $F_j$'s follows easily. Since $F$ is squarefree by hypothesis, the vectors $v_j$ form a partition of $(1,\ldots,1)$ of length $r$. In particular, they form up to reordering the reduced echelon basis of the vector subspace they generate over any given field $\fp$. In positive characteristic, our algorithm will have to solve linear equations both over $\kp$ and over $\fp_{p}$. Hence, in order to unify our notations, we consider for a while the recombination problem over $\fp$ a fixed given subfield of $\kp$. Namely, we want to compute a basis of the following $\fp$-vector space
$$
S:=\left\langle v_1,\ldots,v_r\right\rangle_{\fp}\subset \fp^s.
$$
Hence, solving recombinations essentially reduces to find a system of $\fp$-linear equations that determine $S\subset \fp^s$. If not specified, all vector spaces we introduce from now are defined over $\fp$, keeping in mind that $\fp$ will have to play the role of $\kp$ or $\fp_p$.

\paragraph*{Truncated functions.} Given $\G \in \kp[[x]][y]$, we denote by
$
[\G]^n \in \kp[x,y]
$
the canonical representant of $\G$ modulo $(x^{n})$. We call it the $n$-truncation of $\G$. We will use also the notation
$$
[\G]_n:=\G - [\G]^n\qquad {\rm and }\qquad [\G]_n^m:=[\G]^m-[\G]^n
$$
for lower truncation of functions, with convention that $[\G]_n^m=0$ for $n\ge m$. In other words, we put to zero all coefficients of monomials with $x$-degree $< n$.

\subsection{Recombination and residues.}\label{ss2.2}
The key point to solve recombinations is to reduce a multiplicative problem to a linear algebra problem thanks to the logarithmic derivative operator. Let $\hat{\F_i}$ stands for the quotient of $F$ by $\F_i$. Let $\mu=(\mu_1,\ldots,\mu_s)\in \fp^s$. Applying logarithmic derivative with respect to $y$ to (\ref{prod}) and multiplying by $F$ we get the key characterization
\begin{equation}\label{maineq}
\mu\in S \iff \exists \,\alpha_1,\ldots,\alpha_r\in \fp\quad |\quad \sum_{i=1}^s \mu_i \hat{\F_i}\partial_y\F_i=\sum_{j=1}^r \alpha_j \hat{F_j}\partial_y F_j.
\end{equation}
The reverse implication holds thanks to the separability assumption on $F$ (\citep{L}, Lemma 1). The key idea is to derive from (\ref{maineq}) a system of linear equations for $S$ that depends only on the $(d_x+1)$-truncated polynomial
$$
G_{\mu}:=\sum_{i=1}^s \mu_i \big[\hat{\F_i}\partial_y\F_i\big]^{d_x+1}\,\, \in \,\,\kp[x,y].
$$
Let $G=G_{\mu}$ and let us denote by $\rho_k=\rho_k(\mu)$ the residues
$$
\qquad \qquad \rho_{k}:=\frac{G(x,y_k)}{\partial_y F(x,y_k)}\,\, \in \,\,\overline{\kp(x)},\qquad k=1,\ldots,d_y
$$
of $G/F$ at the roots $y_k\in \overline{\kp(x)}$ of $F$. These residues are well defined thanks to the separability assumption on $F$. 
We get from (\ref{maineq}) that 
\begin{equation}\label{residue}
\quad\mu\in S \,\,\Longrightarrow \,\, \rho_k\in \fp\quad\forall\, k=1,\ldots, d_y.
\end{equation}
In particular, we have an inclusion of $\fp$-vector spaces
$$
S\subset V(\lp):=\Big\{\mu\in \fp^s\,|\,\, \rho_k \in \lp,\,\, k=1,\ldots,d_y\Big\}
$$
for any subfield $\lp\subset \overline{{\kp}(x)}$. In the regular case, the reverse inclusion holds as soon as $\lp\subset \bar{\kp}$, thanks to the following proposition (\citep{L}, Lemma 2).
\begin{proposition}\label{regular}
Suppose that $F(0,y)$ is separable of degree $d_y$. Then $S=V(\bar{\kp})$.
\end{proposition}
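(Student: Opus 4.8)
The inclusion $S\subset V(\bar{\kp})$ is exactly \eqref{residue} (recall $\fp\subset\kp\subset\bar{\kp}$), so everything is in the reverse inclusion $V(\bar{\kp})\subset S$, which is what I would aim at. The starting point is the hypothesis that $F(0,y)$ is separable of degree $d_y$: this forces the leading coefficient of $F$ in $y$ to be a unit of $\kp[[x]]$, so in \eqref{factoan} each $\F_i$ is monic, and Hensel's lemma makes every root $y_k$ of $F$ a genuine power series $y_k\in\bar{\kp}[[x]]$ with $\partial_y F(0,y_k(0))\neq 0$. Consequently $\partial_y F(x,y_k)$ is a \emph{unit} of $\bar{\kp}[[x]]$, each residue $\rho_k=G_\mu(x,y_k)/\partial_y F(x,y_k)$ automatically lies in $\bar{\kp}[[x]]$, and the condition $\mu\in V(\bar{\kp})$ says precisely that every $\rho_k$ is a constant.

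Next I would compare $G_\mu$ with its untruncated analogue $G_\mu^{\mathrm{ex}}:=\sum_{i=1}^s\mu_i\,\hat{\F_i}\,\partial_y\F_i\in\kp[[x]][y]$. Truncation being $\kp$-linear, $G_\mu=[G_\mu^{\mathrm{ex}}]^{d_x+1}$, so all coefficients of $G_\mu-G_\mu^{\mathrm{ex}}$ are divisible by $x^{d_x+1}$. Dividing by $F$ gives $G_\mu^{\mathrm{ex}}/F=\sum_i\mu_i\,\partial_y\F_i/\F_i$, whose residue at $y_k$ is $\mu_{\sigma(k)}$, where $\sigma(k)$ denotes the index of the unique analytic factor vanishing at $y_k$. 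Subtracting residues, $\rho_k-\mu_{\sigma(k)}=(G_\mu-G_\mu^{\mathrm{ex}})(x,y_k)/\partial_y F(x,y_k)\in x^{d_x+1}\bar{\kp}[[x]]$, the numerator being divisible by $x^{d_x+1}$ and the denominator a unit. Hence, as soon as $\mu\in V(\bar{\kp})$, the constant $\rho_k$ is congruent to the constant $\mu_{\sigma(k)}$ modulo $x^{d_x+1}$, which forces $\rho_k=\mu_{\sigma(k)}$ for every $k$; in particular all the $\rho_k$ lie in $\fp$.

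The last step would be a Galois descent along each rational factor. Here $(H)$ is automatic, since $\Disc_y F(x,y)$ specialises at $x=0$ to the nonzero $\Disc_y F(0,y)$; so $F$ is separable over $\kp(x)$, and we may take $G:=\Gal(L/\kp(x))$ with $L$ the splitting field of $F$ over $\kp(x)$. As $\partial_y F$ is invertible modulo $F$ in $\kp(x)[y]$, there is $R\in\kp(x)[y]$ with $R\equiv G_\mu\,(\partial_y F)^{-1}\pmod{F}$, hence $R(y_k)=\rho_k$ for all $k$; since $G$ fixes the coefficients of $R$, every $g\in G$ satisfies $g(\rho_k)=R(g(y_k))=\rho_{k'}$ where $g(y_k)=y_{k'}$. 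Fix now a rational factor $F_j$: it is irreducible over $\kp(x)$, so $G$ permutes its roots transitively, whereas each $\rho_k=\mu_{\sigma(k)}\in\fp\subset\kp(x)$ is $G$-fixed; transitivity then makes $k\mapsto\rho_k$ constant on the roots of $F_j$, i.e. $\mu_i=\mu_{i'}$ whenever $\F_i$ and $\F_{i'}$ both divide $F_j$. Thus $\mu$ is constant on each block $J_j:=\{i:\F_i\mid F_j\}$, so $\mu=\sum_j\lambda_j v_j$ with $\lambda_j\in\fp$ the common value on $J_j$, i.e. $\mu\in S$; together with \eqref{residue} this yields $S=V(\bar{\kp})$.

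The one place where regularity is genuinely used is the assertion that $\partial_y F(x,y_k)$ is a unit of $\bar{\kp}[[x]]$, and this is exactly what I expect to be the obstacle when passing to a critical fiber: there $\partial_y F(x,y_k)$ has positive $x$-valuation $\delta_k$, the $y_k$ become Puiseux series, and $\rho_k$ being a constant only yields $\rho_k=\mu_{\sigma(k)}$ when $\delta_k<d_x+1$. Controlling these valuations is what will force a truncation order larger than $d_x+1$ and is the substance of Section~\ref{S3}.
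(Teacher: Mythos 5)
Your argument is correct and complete. The paper does not actually reprove this statement (it simply cites \citep{L}, Lemma 2), but your proof is precisely the regular-fiber specialization of the method the paper uses later for Theorem \ref{critic}: an $x$-adic valuation comparison showing $\rho_k=\mu_{\sigma(k)}$ (trivialized here because $y_k\in\bar{\kp}[[x]]$ and $\partial_y F(x,y_k)$ is a unit of $\bar{\kp}[[x]]$), followed by the Galois descent on residues that the paper isolates as Lemma \ref{gao}; your closing remark correctly identifies the positive valuation of $\partial_y F(x,y_k)$ as exactly what produces the separability order $N$ and the higher truncation requirement in the critical case.
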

In characteristic zero or high enough, we get that  $\mu\in S$ if and only if $\rho'_k=0$ for all $k$, a condition that can be traduced into a finite number of linear equations over $\kp$. In small positive characteristic $p$, we have that $\rho_k'=0$ implies that $\rho_k\in \overline{{\kp}(x^p)}$ and we use then the Niederreiter operator in order to get some extra $\fp_{p}$-linear equations that allow to test $\rho_k\in \bar{\kp}$ (see Section 3). 

\vskip2mm
\noindent

Unfortunately, the equality $S=V(\bar{\kp})$ in Proposition \ref{regular} no longer holds along a critical fiber, as illustrated by the following example.

\begin{example}\label{ex1}
Let $F=y^6-(y-x)^2\in \qp[x,y]$. We see that $F(0,y)$ has a double root so that the fiber $x=0$ is critical. We compute that $F$ has $s=5$ irreducible analytic factors over $\qp$ and $r=2$ rational factors.
Two of the analytic factors of $F$ have $x$-adic expansions
$$\F_1=y-x-x^3+\cdots,\qquad \F_2=y-x+x^3+\cdots$$
Since $d_x=2$, it follows that 
$$
[\hat{\F_1}\partial_y\F_1]^{d_x+1}=[\hat{\F_2}\partial_y\F_2]^{d_x+1}.
$$ 
In particular, the vector $\mu=(1,-1,0,0,0)\in \fp^5$ gives the null polynomial $G_{\mu}=0$, hence the trivial relation $\mu\in V(\bar{\qp})$. On another hand, we can check that $\mu\notin S$. Hence,  Proposition \ref{regular} doesn't hold in that case. 
\end{example}

\vskip2mm
\noindent
This example suggests to quotient $V(\bar{\kp})$ by the vector subspace $Z$ of relations $G_{\mu}=0$.
Unfortunately, we could not prove that the isomorphism $S\simeq V(\bar{\kp})/Z$ always hold, although this is the kind of approach we will follow in order to compute the number of irreducible factors (Subsection \ref{ssNumFac}). Moreover, even if such an isomorphism holds, it does not allow in general to compute the reduced  echelon basis of $S$ thanks to linear algebra (see Subsection \ref{ss4.4}).  Hence, we rather privilegiate to reduce recombinations to linear algebra. To do so, we need  to introduce extra equations for $S$. Not surprisingly, these equations will depend  now on the analytic factors of $F$ truncated up to some higher precision, this precision being closely related to the valuation of the discriminant.

\section{Recombinations along a critical fiber}\label{S3}

In Subsection \ref{ssorder}, we introduce the notion of separability order of $F$. This integer will measure how much the fiber $x=0$ is critical for $F$ and will play the role of an upper bound for the truncation order of the analytic factors. In Subsection \ref{sssolve}, we solve the recombination problem along a critical fiber. 
We keep the same notations and hypothesis as in the previous section. In particular, $F$ is primitive with respect to $y$.

\subsection{The separability order}\label{ssorder}

To each analytic factor $\mathcal{F}_i$ of $F$, we associate the integers 
$$
r_i:=\val_x \Res_y(\F_i,\hat{\F_i}),\qquad \delta_i:=\val_x \Disc_y(\F_i),\qquad d_i:=\deg_y(\F_i).
$$
Here, $\Res_y$ and $\Disc_y$ stands for the usual resultants and discriminants with respect to $y$, and $\val_x$ stands for the $x$-adic valuation of $\kp[[x]]$. 
We introduce the rational number
$$
q_i:=\frac{r_i+\delta_i}{d_i}
$$
and we denote by $N=N(F)$ the integer:
$$
N:=\max\big\{\lfloor q_{1} \rfloor,\ldots,\lfloor q_{r} \rfloor\big\}.
$$
The integer $N$ measures in some sense how critical the fiber $x=0$ is for the curve $F=0$.  We call it the \textit{separability order} of $F$ along the fiber $x=0$. In particular, we have $N=0$ if $F(0,y)$ is separable of degree $d_y$ (the converse is false, take for instance $F=y^2-x$). The integer $N$ will play the role of an upper bound for the truncation order that allows to solve recombinations. The following lemma summarizes its main properties.  We recall that the standard $x$-adic valuation $\val_x$ of the complete field $\kp((x))$ uniquely extends to a valuation on its algebraic closure $\overline{\kp((x))}$, that we still denote by $\val_{x}$. 

\begin{lemma}\label{order}
\begin{enumerate}
\item We have equality 
\begin{equation}
q_1d_1+\cdots +q_sd_s=\val_x \Disc_y(F).
\end{equation}
\item Let $\phi$ be a root of $\F_i$, and denote by $n_i$ the $x$-valuation of the leading coefficient of $\F_i$. We have the relation
\begin{equation}\label{val}
q_i=\val_x\partial_y F(\phi)+\frac{(d_y-2)n_i}{d_i}.
\end{equation}
In particular, if the leading coefficient of $F$ is invertible in $\kp[[x]]$, we have that
$$
\{q_1,\ldots,q_s\}=\big\{\val_x \partial_y F(\phi),\,\,\phi\,\,{\rm roots\,\,of}\,\, F\}.
$$
\end{enumerate}
\end{lemma}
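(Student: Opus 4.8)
The plan is to unwind the definitions of $r_i$, $\delta_i$, $d_i$ and relate everything to the factorization $F = u\,\F_1\cdots\F_s$ in $\kp[[x]][y]$. For part (1), I would start from the multiplicativity of the discriminant: for a product $G = G_1 G_2$ one has $\Disc_y(G) = \Disc_y(G_1)\Disc_y(G_2)\Res_y(G_1,G_2)^2$ up to a unit coming from leading coefficients. Iterating this over $F = u\,\F_1\cdots\F_s$ gives
\begin{equation*}
\val_x\Disc_y(F) = \sum_{i=1}^s \val_x\Disc_y(\F_i) + \sum_{i<j}2\,\val_x\Res_y(\F_i,\F_j) + (\text{leading-coefficient contributions}).
\end{equation*}
Now $\val_x\Res_y(\F_i,\hat{\F_i}) = \sum_{j\ne i}\val_x\Res_y(\F_i,\F_j)$ by multiplicativity of the resultant in the second argument, so $\sum_i r_i = \sum_i\sum_{j\ne i}\val_x\Res_y(\F_i,\F_j) = 2\sum_{i<j}\val_x\Res_y(\F_i,\F_j)$. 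Hence $\sum_i(r_i+\delta_i) = \val_x\Disc_y(F)$, modulo checking that the contributions of $u$ and of the leading coefficients $x^{n_i}$ cancel correctly; since $q_id_i = r_i+\delta_i$ by definition, this is exactly the claimed identity $\sum q_id_i = \val_x\Disc_y(F)$. The normalization that the leading coefficient of $F$ has first nonzero coefficient $1$ should be what makes the unit bookkeeping come out cleanly; I expect the careful tracking of these leading-coefficient terms (especially when $\mathrm{lc}_y(F)$ is not invertible in $\kp[[x]]$) to be the fussiest part.

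For part (2), I would work over $\overline{\kp((x))}$ and use the standard formulas $\Disc_y(\F_i) = (-1)^{d_i(d_i-1)/2}\,\mathrm{lc}_y(\F_i)^{-1}\Res_y(\F_i,\partial_y\F_i)$ and $\Res_y(\F_i,G) = \mathrm{lc}_y(\F_i)^{\deg_y G}\prod_{\F_i(\phi)=0}G(\phi)$. Applying the latter to $G = \hat{\F_i}$ and to $G = \partial_y\F_i$, and combining, one gets, for a fixed root $\phi$ of $\F_i$ (all roots of a given irreducible analytic factor being Galois-conjugate over $\kp((x))$, hence having the same $\val_x$ of $\partial_y F(\phi)$):
\begin{equation*}
r_i + \delta_i = \val_x\!\!\prod_{\F_i(\phi)=0}\!\!\hat{\F_i}(\phi)\,\partial_y\F_i(\phi) + (\text{powers of }\,n_i) = d_i\,\val_x\bigl(\partial_yF(\phi)\bigr) + (\text{correction in }n_i),
\end{equation*}
using $\partial_yF(\phi) = \hat{\F_i}(\phi)\,\partial_y\F_i(\phi)$ at a root $\phi$ of $\F_i$ together with $\val_x\hat{\F_i}(\phi) = \sum_{j\ne i}\val_x\Res_y(\F_j,\F_i)/d_i$ suitably interpreted. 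Dividing by $d_i$ yields $q_i = \val_x\partial_yF(\phi) + (d_y-2)n_i/d_i$ once the exponents are counted: the $\mathrm{lc}_y(\F_i)^{-1} = x^{-n_i}$ from the discriminant contributes $-n_i$, the $\mathrm{lc}_y(\F_i)^{d_y - d_i}$-type factors from rewriting $\prod\hat{\F_i}(\phi)$ in terms of resultants contribute the rest, and the total comes out to the coefficient $(d_y-2)$ after collecting. The last assertion is then immediate: if $\mathrm{lc}_y(F)$ is a unit in $\kp[[x]]$ then every $n_i = 0$, so $q_i = \val_x\partial_yF(\phi)$, and as $\phi$ ranges over all roots of $F$ it ranges over the roots of each $\F_i$ in turn, giving the displayed set equality.

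The main obstacle I anticipate is not conceptual but bookkeeping: getting the exponents of the leading coefficients exactly right in the resultant/discriminant product formulas so that the coefficient $(d_y-2)$ (rather than $d_y$ or $d_y-1$ or $d_i-2$) emerges, and simultaneously confirming that these leading-coefficient powers telescope in part (1) to leave precisely $\val_x\Disc_y(F)$ with no leftover unit valuation. A clean way to organize this is to first prove (2) and then derive (1) by summing $q_id_i = r_i+\delta_i$ over $i$ and invoking the discriminant product expansion only to identify $\sum(r_i+\delta_i)$ with $\val_x\Disc_y(F)$; alternatively both parts can be checked by passing to the splitting field and comparing $\val_x$ of the two sides directly on roots, which sidesteps some of the resultant-identity juggling.
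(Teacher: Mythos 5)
Your plan is correct and follows essentially the same route as the paper: multiplicativity of the discriminant and resultant for part (1), and for part (2) the evaluation $\partial_yF(\phi)=\hat{\F_i}\partial_y\F_i(\phi)$ combined with the product formula $\prod_{\F_i(\phi)=0}\hat{\F_i}\partial_y\F_i(\phi)=\Res_y(\F_i,\hat{\F_i}\partial_y\F_i)\lc(\F_i)^{1-d_y}$ and $\Res_y(\F_i,\partial_y\F_i)=\lc(\F_i)\Disc_y(\F_i)$, using Galois-invariance of $\val_x$ to divide by $d_i$. The exponent bookkeeping you flag does come out exactly as you anticipate ($-(d_y-1)n_i+n_i=(2-d_y)n_i$), and the unit $u$ contributes nothing since $\val_x u=0$.
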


\begin{proof}
By the multiplicative properties of the discriminant and the resultant, we get that
\begin{eqnarray*}
\qquad\qquad\Disc_y (F)&=&\prod_{i=1}^s \Disc_y(\F_i) \prod_{1\le i<j\le s} \Res_y(\F_i,\F_j)^2\\
&=&\prod_{i=1}^s \Disc_y(\F_i)\prod_{i=1}^s \Res_y(\F_i,\hat{\F_i}),
\end{eqnarray*}
and $(1)$ follows directly by applying $\val_x$ to this equality.
Let now $\phi$ be a root of $\F_i$. We thus have
$$
\partial_y F(\phi)=\hat{\F_i}\partial_y \F_i(\phi).
$$
On another hand, we have the product formula
$$
\prod_{\F_i(\phi)=0}\hat{\F_i} \partial_y \F_i(\phi)=\Res_y(\F_i,\hat{\F_i} \partial_y \F_i)\lc(\F_i)^{1-d_y},
$$
where $\lc(\F_i)$ stands for the leading coefficient of $\F_i$ and where the left hand side product runs over all roots of $\F_i$. Combined with the multplicative property of the resultant 
$$
\Res_y(\F_i,\hat{\F_i} \partial_y \F_i)=\Res_y(\F_i,\hat{\F_i})\Res_y(\F_i,\partial_y\F_i)
$$
and with its relation to the discriminant
$$  
\Res_y(\F_i,\partial_y \F_i)=\lc(\F_i) \Disc_y(\F_i),
$$
we get the formula
$$
\prod_{\F_i(\phi)=0}\hat{\F_i} \partial_y \F_i(\phi)=\lc(\F_i)^{2-d_y}\Res_y(\F_i,\hat{\F_i})\Disc_y(\F_i).
$$
Since $\val_x$ is invariant under the $\kp((x))$-automorphisms of the algebraic closure of $\kp((x))$, point $(2)$ follows by applying $\val_x$ to the previous equality and by dividing by the degree $d_i$ of $\F_i$.
\end{proof}

\begin{remark}\label{remtrunc}
Lemma \ref{order} implies in particular that 
$$
N\le \frac{\val_x \Disc_y(F)}{d}\le \frac{d_x(2d_y-1)}{d},
$$
where 
$d:=\min\{d_i,\,i=1,\ldots,s\}$ stands for the minimal degree of the $\mathcal{F}_i$'s.
In particular, $N\in \mathcal{O}(d_xd_y)$. The following generalization of Example \ref{ex1}, suggested to us by Eduardo Casas-Alvero, shows that $N$ may reach this order of magnitude.
\end{remark}

\begin{example}\label{ex2}
Let $F(x,y):=(y-x^m)^2+y^n\in \qp[x,y]$, with $n\ge 3$ odd. Then $(0,0)$ is the unique point of the curve $F=0$ that is ramified over $x=0$. We can show that $F$ admits a unique irreducible analytic factor $\F_1$ vanishing at $(0,0)$, with degree $d_1=2$. It follows that
$$
\delta_1=\val_x \Disc_y(F)\quad {\rm and} \quad r_1=0,
$$
while $\delta_i=r_i=0$ for all $i> 1$. We compute here that $\val_x \Disc_y(F)=mn$. It follows that
$$N=\Big\lfloor\frac{r_1+\delta_1}{d_1}\Big\rfloor=\Big\lfloor\frac{mn}{2}\Big\rfloor=\Big\lfloor\frac{d_x d_y}{4}\Big\rfloor,$$ 
which is of the order of magnitude of $d_xd_y$.
\end{example}

\subsection{Solving recombinations along a critical fiber.}\label{sssolve}

We can derive from (\ref{maineq}) an other obvious source of equations for $S$. Namely, let us introduce for $n\in \np$ the $\fp$-vector subspace
$$
W^n:=\Big\{\mu\in \fp^s\,\,|\,\,\, \sum_{i=1}^s \mu_i \big[\hat{\F_i}\partial_y\F_i\big]^n_{d_x+1}=0\Big\},
$$
with convention $W^n=\fp^s$ when $n\le d_x+1$. For a question of degree, (\ref{maineq}) implies that we have the inclusions
$$
S\subset W^n\quad \forall n \in \np.
$$
Our next result ensures that the separability order gives an {\it a priori} upper bound for $n$ for which $W^n$ provides enough extra equations to solve the recombination problem. 

\begin{theorem}\label{critic} 
We have $S=V(\bar{\kp})\cap W^{n}$ for all $n>N$. 
\end{theorem}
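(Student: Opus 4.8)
\emph{Plan.} The inclusions $S\subseteq V(\bar{\kp})$ and $S\subseteq W^n$ were already recorded (from $(\ref{residue})$ and from the degree argument following $(\ref{maineq})$), so only the reverse inclusion $V(\bar{\kp})\cap W^n\subseteq S$ remains, and only for $n>N$. Fix such an $n$ and $\mu\in V(\bar{\kp})\cap W^n$, and set
$$
H_\mu:=\sum_{i=1}^s\mu_i\,\hat{\F_i}\,\partial_y\F_i\ \in\ \kp[[x]][y],\qquad \deg_yH_\mu\le d_y-1.
$$
By $(\ref{maineq})$ it is enough to exhibit scalars $\alpha_1,\dots,\alpha_r\in\fp$ with $H_\mu=\sum_{j=1}^r\alpha_j\,\hat{F_j}\,\partial_yF_j$. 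Put $\nu:=\max(n,d_x+1)$. Since $G_\mu=[H_\mu]^{d_x+1}$ and, when $n>d_x+1$, membership of $\mu$ in $W^n$ forces $[H_\mu]^n=[H_\mu]^{d_x+1}$, in all cases $H_\mu-G_\mu=[H_\mu]_\nu$; thus every coefficient of $H_\mu-G_\mu$ has $x$-adic valuation $\ge\nu$, and in fact the coefficient of $y^{d_y-1}$ (which in $H_\mu$ is $(\sum_\ell\mu_\ell d_\ell)\lc(F)$, of $x$-degree $\le d_x<\nu$) even disappears, so $\deg_y(H_\mu-G_\mu)\le d_y-2$.

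\emph{Residue comparison.} Let $y_k$ be a root of $F$ lying on the analytic factor $\F_i$. From $\partial_yF=\sum_\ell\hat{\F_\ell}\partial_y\F_\ell$ and the fact that $\F_i$ divides $\hat{\F_\ell}$ for every $\ell\ne i$, the \emph{exact} residue of $H_\mu/F$ at $y_k$ is $H_\mu(x,y_k)/\partial_yF(x,y_k)=\mu_i$, while the residue of $G_\mu/F$ at $y_k$ is by definition $\rho_k$; hence
$$
\rho_k-\mu_i\;=\;-\,\frac{(H_\mu-G_\mu)(x,y_k)}{\partial_yF(x,y_k)}.
$$
The key claim is that $\val_x(\rho_k-\mu_i)>0$. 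When $\lc(F)$ is a unit of $\kp[[x]]$, all $n_i$ vanish and every $y_k$ has $\val_x(y_k)\ge0$, so the numerator has valuation $\ge\nu$; by Lemma \ref{order}(2) the denominator has valuation $q_i$; and since $n>N\ge\lfloor q_i\rfloor$ we get $\nu\ge n>q_i$, whence $\val_x(\rho_k-\mu_i)\ge\nu-q_i>0$. In the general case one has to control the branches at infinity: all roots of the irreducible factor $\F_i$ share the valuation $-a_i$ with $0\le a_i\le n_i/d_i$ (its Newton polygon being a single segment), Lemma \ref{order}(2) gives $\val_x\partial_yF(\phi)=q_i-(d_y-2)n_i/d_i$ for $\phi$ a root of $\F_i$, and one must check that the extra $x$-divisibility imposed on the high $y$-degree coefficients of $H_\mu-G_\mu$ by the upper Newton polygon of $F$ (of which the vanishing of the $y^{d_y-1}$-coefficient above is the first instance) compensates for the negative valuation of $\phi$ well enough to keep $\val_x(\rho_k-\mu_i)>0$ for every $n>N$. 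This valuation estimate with non-unit leading coefficient is the main obstacle; the rest of the argument is formal.

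\emph{Conclusion from the claim.} Since $\mu\in V(\bar{\kp})$ we have $\rho_k\in\bar{\kp}$, and $\mu_i\in\fp\subseteq\bar{\kp}$, so $\rho_k-\mu_i$ is a constant; a constant of positive $x$-adic valuation is $0$, hence $\rho_k=\mu_i$ for every $k$. Each rational factor $F_j$ is irreducible and separable over $\kp(x)$ (it divides $F$, which is primitive in $y$ and separable by $(H)$), so any two of its roots are interchanged by some $\kp(x)$-automorphism $\tau$ of $\overline{\kp(x)}$; since $G_\mu,\partial_yF\in\kp[x,y]$, such a $\tau$ sends $\rho_k$ to the residue $\rho_{k'}$ at $y_{k'}:=\tau(y_k)$. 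Because every such $\rho_k$ is a scalar in $\fp\subseteq\kp(x)$, fixed by $\tau$, the residues at all roots of $F_j$ agree; therefore $\mu_i=\mu_{i'}$ whenever $\F_i$ and $\F_{i'}$ both divide $F_j$ in $(\ref{prod})$. Thus $\mu$ takes a common value $\alpha_j$ on each block $\{i:\F_i\mid F_j\}$, and grouping the sum $H_\mu=\sum_i\mu_i\hat{\F_i}\partial_y\F_i$ by blocks, using $\sum_{\F_i\mid F_j}(F_j/\F_i)\partial_y\F_i=\partial_yF_j$ and $\hat{F_j}(F_j/\F_i)=F/\F_i=\hat{\F_i}$, yields $H_\mu=\sum_{j=1}^r\alpha_j\hat{F_j}\partial_yF_j$. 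By $(\ref{maineq})$, $\mu\in S$, which completes the proof.
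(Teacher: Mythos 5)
Your argument follows the paper's own route: compare the truncated combination $G_\mu$ with the untruncated $H_\mu$, evaluate at a root $\phi=y_k$ of $\F_i$, and turn the hypothesis $n>N$ into a valuation estimate forcing $\rho_k=\mu_i$; the only cosmetic difference is that you phrase the comparison in terms of residues, where the paper first invokes Lemma \ref{gao} to write $G_\mu=\sum\alpha_j\hat{E_j}\partial_yE_j$ over the absolute factors and compares $\mu_i$ with $\alpha_j$ (these are the same numbers, since the residue at any root of $E_j$ is $\alpha_j$). The closing Galois argument and the regrouping into $\sum\alpha_j\hat{F_j}\partial_yF_j$ are fine.

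However, you leave a genuine gap exactly where the theorem lives: you prove the key inequality $\val_x(\rho_k-\mu_i)>0$ only when $\lc_y(F)$ is a unit of $\kp[[x]]$, and for the general case you merely assert that ``one must check'' that some extra $x$-divisibility of the high $y$-degree coefficients of $H_\mu-G_\mu$ compensates the negative valuation of $\phi$. No such extra divisibility is needed, and the estimate closes with exactly the facts you already quote. Since the $y^{d_y-1}$-coefficient of $H_\mu$ is $\bigl(\sum_\ell\mu_\ell d_\ell\bigr)\lc_y(F)$, of $x$-degree $\le d_x<\nu$, the difference $H_\mu-G_\mu=[H_\mu]_\nu$ has $y$-degree $\le d_y-2$ and all coefficients of valuation $\ge\nu$. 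By Lemma \ref{lval} every root $\phi$ of $\F_i$ satisfies $\val_x\phi=(b_i-n_i)/d_i\ge-n_i/d_i$, hence $\val_x\phi^j\ge-(d_y-2)n_i/d_i$ for $0\le j\le d_y-2$, and the ultrametric inequality gives
$$
\val_x\bigl((H_\mu-G_\mu)(\phi)\bigr)\;\ge\;\nu-\frac{(d_y-2)n_i}{d_i}.
$$
Dividing by $\partial_yF(\phi)$, whose valuation is $q_i-(d_y-2)n_i/d_i$ by Lemma \ref{order}(2), the two correction terms cancel and $\val_x(\rho_k-\mu_i)\ge\nu-q_i$; since $\nu\ge n>N\ge\lfloor q_i\rfloor$ and $\nu$ is an integer, $\nu>q_i$ and the claim follows. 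In other words, the $(d_y-2)n_i/d_i$ term built into the definition of $q_i$ is precisely the compensation you were looking for; this is the point of Lemma \ref{order}(2), and it is how the paper's proof handles the non-monic case.
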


In particular, if $N\le d_x$, then the recombinations are solved by the same system of linear equations as in the regular case:

\begin{corollary}\label{cor1}
Suppose that $N\le d_x$. Then $S=V(\bar{\kp})$.
\end{corollary}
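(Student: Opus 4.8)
The plan is to obtain this as an immediate consequence of Theorem~\ref{critic}, by choosing the truncation order $n=d_x+1$ and exploiting the convention that $W^n=\fp^s$ whenever $n\le d_x+1$. So there is no real work to do beyond the right bookkeeping.

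First I would note that the hypothesis $N\le d_x$ gives $d_x+1\ge N+1>N$, so the integer $n:=d_x+1$ satisfies $n>N$. Applying Theorem~\ref{critic} with this value of $n$ yields
\[
S=V(\bar{\kp})\cap W^{d_x+1}.
\]
Next I would invoke the definition of $W^n$ at $n=d_x+1$: for every $i$ one has $[\hat{\F_i}\partial_y\F_i]^{d_x+1}_{d_x+1}=[\hat{\F_i}\partial_y\F_i]^{d_x+1}-[\hat{\F_i}\partial_y\F_i]^{d_x+1}=0$, i.e.\ (by the stated convention $W^n=\fp^s$ for $n\le d_x+1$) we have $W^{d_x+1}=\fp^s$. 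Substituting this into the displayed equality gives $S=V(\bar{\kp})\cap\fp^s=V(\bar{\kp})$, which is the assertion.

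I do not expect any obstacle here: the corollary is purely a specialization of Theorem~\ref{critic} to the regime where the extra linear constraints cutting out $W^n$ are vacuous, and all the mathematical content sits in that theorem (together with Proposition~\ref{regular}, which it generalizes). The one point to be careful about is the off-by-one in the convention — $W^n=\fp^s$ holds for $n\le d_x+1$, not merely for $n\le d_x$ — which is precisely why the correct threshold in the hypothesis is $N\le d_x$ rather than $N\le d_x-1$.
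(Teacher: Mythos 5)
Your proof is correct and follows exactly the route the paper intends: apply Theorem~\ref{critic} with $n=d_x+1>N$ and observe that $W^{d_x+1}=\fp^s$ by the stated convention, so the intersection with $W^{d_x+1}$ is vacuous. Nothing is missing.
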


\begin{remark}
This corollary implies in particular that all polynomials that are non degenerate with respect to their Newton polytope satisfy $V(\bar{\kp})=S$ (see Section \ref{Spolytope}). 
\end{remark}

\begin{remark}\label{remfiber}
Suppose that $F$ is separable with respect to $y$. For $\alpha\in \mathbb{P}^1_{\kp}$, let us denote by $N_{\alpha}$ the separability order of $F$ over the fiber $x=\alpha$. From inequalities, 
$$
\sum_{\alpha\in \mathbb{P}^1_{\kp}} N_{\alpha}\le \sum_{\alpha\in \mathbb{P}^1_{\kp}} \val_{x-\alpha} \Disc_y(F) =\deg_x \Disc_y(F) \le d_x(2d_y-1),
$$
we deduce that there always exist a fiber for which $N_{\alpha}\le d_x$ as soon as $\kp$ has cardinality $\ge 2d_y-3$.  
\end{remark}

In order to prove Theorem \ref{critic}, we need to prove two preliminary lemmas. The first lemma is a key lemma that will be used many times in the paper.

\begin{lemma}\label{gao}
Let $\kp\subset \lp \subset \overline{\kp((x))}$ be a field and let $G\in \kp[x,y]$, with $\deg_y \, G < d_y$. The residues $\rho_k$ of $G/F$ all lie in $\lp$ if and only if $G$ is $\lp$-linear combination of the $\hat{E_j}\partial_y E_j$'s, where the $E_j$'s stand for the irreducible factors of $F$ over $\lp$.  
\end{lemma}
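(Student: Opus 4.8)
The statement is an "if and only if" characterizing when all residues of $G/F$ lie in an intermediate field $\lp$ between $\kp$ and $\overline{\kp((x))}$. The natural strategy is to work over $\lp$ from the start: factor $F = c\,E_1\cdots E_t$ into its monic-up-to-normalization irreducible factors in $\lp[x][y]$ (or more precisely in the relevant localization; $F$ is primitive with respect to $y$, so this is harmless), and apply partial fraction decomposition of $G/F$ over the field $\lp(x)$. Since $\deg_y G < d_y = \deg_y F$ and $F$ is separable with respect to $y$, we have a unique decomposition
$$
\frac{G}{F} = \sum_{j=1}^{t} \frac{A_j}{E_j}, \qquad A_j \in \lp(x)[y], \quad \deg_y A_j < \deg_y E_j.
$$

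**Key steps.** First I would record the standard residue formula: for a root $y_k$ of $E_j$ (hence of $F$), the residue $\rho_k = G(x,y_k)/\partial_y F(x,y_k)$ equals $A_j(x,y_k)/\partial_y E_j(x,y_k)$, because $\partial_y F(x,y_k) = \hat{E_j}(x,y_k)\,\partial_y E_j(x,y_k)$ and $G(x,y_k) = A_j(x,y_k)\,\hat{E_j}(x,y_k)$ (all other summands in the partial fraction decomposition vanish at $y_k$ after clearing denominators). So the residues of $G/F$ at the roots of $E_j$ coincide with the residues of $A_j/E_j$. Next, the crucial observation: the residues of $A_j/E_j$ at the roots of the \emph{irreducible} polynomial $E_j \in \lp[x][y]$ form a full Galois orbit over $\lp(x)$; they all lie in $\lp$ (equivalently, in $\lp(x)$, since being algebraic over $\kp$ and in $\lp(x)$ forces being in $\lp$ when $\lp$ is, say, algebraically closed in itself — more carefully, I would phrase $\lp$ as a field with $\lp(x)/\lp$ purely transcendental, or just argue directly that a residue in $\lp$ is equivalent to $A_j = \partial_y E_j \cdot (\text{constant in }\lp)$) if and only if $A_j$ is an $\lp$-multiple of $\partial_y E_j$. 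The forward direction of that equivalence uses: if $\rho_k = \lambda \in \lp$ for all roots $y_k$ of $E_j$, then $A_j(x,y_k) = \lambda\,\partial_y E_j(x,y_k)$ for all such $y_k$, and since $\deg_y(A_j - \lambda \partial_y E_j) < \deg_y E_j$ while $E_j$ is separable with $\deg_y E_j$ roots, we get $A_j = \lambda\,\partial_y E_j$. Summing, $G/F = \sum_j \lambda_j \partial_y E_j / E_j = \sum_j \lambda_j \hat{E_j}\partial_y E_j / F$, i.e. $G = \sum_j \lambda_j \hat{E_j}\partial_y E_j$, which is one direction. The converse is the easy direction: if $G = \sum_j \lambda_j \hat{E_j}\partial_y E_j$ with $\lambda_j \in \lp$, then evaluating at a root $y_k$ of $E_j$ gives $\rho_k = \lambda_j \in \lp$ directly.

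**Main obstacle.** The routine calculations are the partial-fraction bookkeeping and the degree count; those are standard. The step that needs a little care is handling the \emph{leading coefficient} of $F$ and of the $E_j$, since $F$ need not be monic in $y$ (only normalized so that the leading coefficient has first nonzero coefficient $1$, and this leading coefficient may be a non-unit $x^n$ in $\kp[[x]]$). One must make sure the partial fraction decomposition and the residue formula survive this: the cleanest fix is to note that $\Res_y$ arguments and the identity $\partial_y F(x,y_k) = \hat{E_j}(x,y_k)\partial_y E_j(x,y_k)$ hold regardless of normalization (they are identities of evaluation at roots), and that $\deg_y G < d_y$ is exactly what guarantees the partial fraction numerators have $\deg_y A_j < \deg_y E_j$ with no polynomial part. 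A secondary subtlety is ensuring "residue lies in $\lp$" is genuinely equivalent to "$A_j$ is an $\lp$-multiple of $\partial_y E_j$" rather than an $\lp(x)$-multiple — but since $A_j/\partial_y E_j$ would then be a polynomial in $\lp(x)$ that is simultaneously a single algebraic number over $\kp$ (being equal to each $\rho_k$), it is forced to be a constant, and a constant in $\lp(x)$ algebraic over $\kp$ lies in $\lp$. I would state this last point as a one-line remark rather than belabor it.
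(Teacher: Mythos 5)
Your proposal is correct and is essentially the paper's own argument: both rest on the partial fraction decomposition of $G/F$ (valid because $\deg_y G < d_y$ and $F$ is separable in $y$) together with the action of $\Aut\big(\overline{\lp(x)}/\lp(x)\big)$, which permutes transitively the roots of each irreducible factor $E_j$ and fixes residues lying in $\lp$, thereby forcing all residues attached to a given $E_j$ to coincide with a single $\lambda_j\in\lp$. The only cosmetic difference is that you first decompose over $\lp(x)$ as $\sum_j A_j/E_j$ and conjugate the residue formula $\rho_k=A_j(x,y_k)/\partial_y E_j(x,y_k)$, whereas the paper decomposes over the splitting field as $\sum_k \rho_k/(y-y_k)$ and invokes the uniqueness of that decomposition under the Galois action.
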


\begin{proof}
One direction is clear: if $G=\sum \alpha_j\hat{E_j}\partial_y E_j$, then $\rho_k=\alpha_j\in \lp$ where $j$ is determined by condition $E_j(x,y_k)=0$. Suppose now that $\rho_k\in \lp$. Thanks to the degree assumption on $G$ and the separability assumption on $F$, we have the partial fraction decomposition
$$
\frac{G}{F}=\sum_{k=1}^{d_y} \frac{\rho_k}{y-y_k}.
$$ 
Let $\tau\in \Gamma:=\Aut\Big(\overline{\lp(x)}/\lp(x)\Big)$ acts on this equality. By assumption, $\tau$ leaves both $G/F$ and $\rho_k$ fixed. Hence, we get 
$$
\sum_{k=1}^{d_y} \frac{\rho_k}{y-\tau(y_k)}=\sum_{k=1}^{d_y} \frac{\rho_k}{y-y_k}=\sum_{k=1}^{d_y} \frac{\rho_{k_{\tau}}}{y-\tau(y_k)},
$$
the second equality using that $\tau$ permutes the roots of $F$. Here, the notation $k_{\tau}$ stands for the unique index such that $\phi_{k_{\tau}}=\tau(y_k)$. The partial fraction decomposition being unique, previous equality implies that
$$
\rho_k=\rho_{k_{\tau}} \quad \forall\, \tau \in \Gamma.
$$
Since $\Gamma$ acts transitively on the set of roots of each $\lp$-irreducible factor $E_{j}$, it follows that $\rho_k=\rho_{k'}$ whenever $E_j(x,y_k)=E_j(x,y_{k'})=0$. Hence, there exist constants $\alpha_1,\ldots, \alpha_{\ell}\in \lp$ such that
$$
\frac{G}{F}=\sum_{j=1}^{\ell} \alpha_j \Big(\sum_{k|E_j(y_k)=0} \frac{1}{y-y_k}\Big)=\sum_{j=1}^{\ell} \alpha_j \frac{\partial_y E_j}{E_j}.
$$
The result follows from multiplication by $F$.
\end{proof}

The next lemma computes the valuations of the roots of $F$. 

\begin{lemma}\label{lval}
Let $\F\in \kp[[x]][y]$ be an irreducible polynomial of degree $d$ in $y$. Let $a$ and $b$ stand respectively for the valuation of the leading coefficient and the constant coefficient of $\F$ seen as a polynomial in $y$. Let $\phi\in \overline{\kp((x))}$ be a root of $\F$. Then $\val_x (\phi)=(b-a)/d$ and either $a$ or $b$ is equal to $0$.
\end{lemma}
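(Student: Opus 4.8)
The plan is to reduce everything to the Newton polygon of $\F$ with respect to the $x$-adic valuation. Write $\F=\sum_{j=0}^d c_j(x)\,y^j$ with $c_j\in\kp[[x]]$, so $a=\val_x c_d$ and $b=\val_x c_0$, and consider the lower convex hull of the set of points $\{(j,\val_x c_j)\}$ in $\rp^2$ (the $x$-adic Newton polygon of $\F$). The valuations of the roots $\phi$ of $\F$ in $\overline{\kp((x))}$ are, by the standard Newton polygon theory over a complete discretely valued field, exactly the negatives of the slopes of the edges of this polygon, with multiplicities given by the horizontal lengths of the edges. Since $\F$ is irreducible over $\kp[[x]]$ — equivalently over the complete field $\kp((x))$ after clearing the content — all $d$ roots have the same valuation, which forces the Newton polygon to consist of a \emph{single} edge joining $(0,b)$ to $(d,a)$. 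Hence each root satisfies $\val_x(\phi)=-\text{slope}=(b-a)/d$.

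The first step I would carry out is therefore to recall or cite the Newton polygon lemma for $\kp((x))$ (e.g. via Hensel's lemma / the theory of Puiseux expansions), phrased as: the multiset of root valuations of $\F$ equals the multiset of slopes of the Newton polygon, read with the appropriate sign convention. The second step is the irreducibility argument: if the Newton polygon had two distinct slopes, $\F$ would factor over $\kp((x))$ into two nonconstant polynomials (the product of the linear factors corresponding to each slope, each defined over $\kp((x))$ because it is Galois-stable, the Galois group preserving valuations), contradicting irreducibility of $\F$ over $\kp[[x]]$. So the polygon is a single segment from $(0,b)$ to $(d,a)$, giving $\val_x(\phi)=(b-a)/d$ for every root.

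For the final claim that $a=0$ or $b=0$: here I would invoke the normalization convention in force in the paper. Recall from Subsection \ref{ss2.1} and Section \ref{S3} that the analytic factors $\F_i$ are taken with leading coefficient $x^{n_i}$, but more to the point, $\F$ arises as a factor of the primitive polynomial $F$ in $\kp[[x]][y]$; primitivity of $F$ with respect to $y$ means $x\nmid \gcd$ of its coefficients, and this property is inherited by each analytic factor up to the unit $u\in\kp[x]$ with $u(0)\neq 0$ in (\ref{factoan}). Concretely, after pulling out the largest power of $x$ dividing all the $c_j$'s (which we may and do assume is $x^0$, since $\F$ is a genuine analytic factor of the primitive $F$ and not a multiple of $x$), the lowest point of the Newton polygon lies on the $x$-axis; as that lowest point is an endpoint of the single edge, it is either $(0,b)$ or $(d,a)$, giving $b=0$ or $a=0$ respectively.

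The main obstacle I anticipate is bookkeeping around the normalization rather than anything deep: one must make sure that the $\F$ appearing in the lemma is content-free in $x$ (no common factor $x$ among the $c_j$), which is exactly what guarantees $\min(a,b)=0$ once we know the Newton polygon is a single edge with endpoints $(0,b)$ and $(d,a)$. Provided that convention is in place — and it is, since these $\F$'s are the irreducible analytic factors of a $y$-primitive $F$ — the rest is the classical Newton-polygon computation, and the irreducibility of $\F$ does the work of collapsing the polygon to one segment.
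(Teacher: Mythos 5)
Your proposal is correct, and its first half (collapsing the Newton polygon to a single compact edge using irreducibility) is exactly the idea the paper uses to get $\min(a,b)=0$. Where you diverge is in the derivation of the valuation formula: you invoke the slope theorem for Newton polygons over a complete valued field (root valuations $=$ negatives of the slopes, with the slope factorization providing the contradiction for two distinct slopes), whereas the paper computes $\val_x(\phi)$ via the norm $\N$ of the extension $\kp((x))\subset\kp((x))(\phi)$: since $\N(\phi)=\pm c_0/c_d$, one gets $\val_x\N(\phi)=b-a$ and then $\val_x\phi=\val_x\N(\phi)/d$ from the uniqueness of the extension of $\val_x$. The norm argument is a lighter tool --- it needs only Galois/uniqueness invariance of the valuation and no slope factorization theorem --- while your route has the advantage of delivering the single-edge statement and the valuation formula in one stroke. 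One small tightening worth making: you justify $x\nmid\F$ by appealing to the normalization of the analytic factors of a $y$-primitive $F$, but the lemma is stated for an arbitrary irreducible $\F\in\kp[[x]][y]$, and content-freeness follows directly from irreducibility itself (otherwise $\F=x\cdot(\F/x)$ is a nontrivial factorization), which is how the paper opens its proof; with that substitution your argument is self-contained and fully matches the statement as given.
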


\begin{proof}
Since $\F$ is irreducible, at least one of its coefficient has valuation $0$. Hence, if both $a$ and $b$ are non zero, then its Newton polytope would contain at least two distinct compact edges (Section \ref{Spolytope}). This is impossible since $\F$ is irreducible. Let $\N$ stands for the norm of the field extension of $\kp((x))$ defined by $\F$. Then $\N(\phi)$ is equal to the quotient of the constant coefficient of $\F$ by its leading coefficient. Hence $\val_x \N(\phi)=b-a$ and we conclude thanks to the relation $\val_x \phi= \val_x\N(\phi)/\deg(\phi)$.
\end{proof}

\paragraph*{Proof of Theorem \ref{critic}.} We already saw that $S\subset V(\bar{\kp})\cap W^n$ and we need to prove the reverse inclusion when $n> N$. Let $\mu\in V(\bar{\kp})\cap W^n$. Thanks to the previous lemma, and by definition of $W^n$, we deduce that there exists some constants $\alpha_j\in \bar{\kp}$ such that
\begin{equation}\label{eq4}
\sum_{i=1}^s\mu_i \big[\hat{\F_i} \partial_y \F_i\big]^m =\sum_{j=1}^{\ell}\alpha_j \hat{E_j} \partial_y E_j,
\end{equation}
where $m=\max(d_x+1,n)$ and where the $E_j$'s stand for the irreducible factors of $F$ over $\bar{\kp}$.  Let $\phi\in \overline{\kp((x))}$ be a root of $\F_i$ and let $j$ be the unique index such that $E_j(\phi)=0$. Using the relations
$$
\hat{\F_i}\partial_y\F_i(\phi)=\hat{E_j} \partial_y E_j(\phi)=\partial_y F(\phi),
$$
we get by evaluating $(\ref{eq4})$ at $\phi$ an equality
\begin{equation}\label{eq5}
(\mu_i-\alpha_{j})\partial_yF(\phi)= x^{m}R(\phi)
\end{equation}
for some $R\in \kp[[x]][y]$. We need a lower bound on the valuation of $R(\phi)$. We remark that the coefficient of $y^{d_y-1}$ in $\partial_y\F$ is equal to $d_y\lc_y(F)$. Since the leading coefficient of $F$ is a polynomial in $x$ of degree at most $d_x$, equation (\ref{eq5}) implies that $R$ has $y$-degree $\le d_y-2$. Hence, ultrametric inequality combined with Lemma \ref{lval} gives
$$
\val_x R(\phi)\ge \min\{\val_x \phi^i,\,\,i=0,\ldots,d_y-2\}\ge -\frac{(d_y-2)n_i}{d_i},
$$
(recall that  $x^{n_i}$ stands for the leading coefficient of $\F_i$). Suppose that $\mu_i\ne \alpha_j$. Hence, (\ref{eq5}) gives 
\begin{equation*}
\val_{x} \partial_y F(\phi)\ge m-\frac{(d_y-2)n_i}{d_i}.
\end{equation*}
By Lemma \ref{order}, this is equivalent to that $m\le q_i$, contradicting our hypothesis $m=\max(d_x+1,n)> N$. It follows that $\mu_i=\alpha_{j}$. Combined with (\ref{eq4}), we get that
$$
G:=\Big[\sum_{i=1}^s\mu_i \hat{\F_i} \partial_y \F_i\Big]^{d_x+1}=\sum_{i=1}^s\mu_i \hat{\F_i} \partial_y \F_i.
$$
In particular, the residues of $G/F$ all lie in $\fp\subset\kp$, and it follows from Lemma \ref{gao} that  
$$
G=\sum_{i=s}^r\mu_i \hat{\F_i} \partial_y \F_i=\sum_{j=1}^r c_j \hat{F_j} \partial_y F_j.
$$
for some $c_j\in \kp$. Since $\F_i$ is coprime to $\hat{\F_i}\partial_y\F_i$ by hypothesis, this relation forces equality $\mu_i=c_j$ when $\F_i$ divides $F_j$. It follows that $\mu\in S$. $\hfill\square$

\section{Counting the number of irreducible factors}\label{S4}

We show here how to bound the number of factors with the $d_x+1$-truncation order and we deduce a deterministic  irreducibility test that requires the only $2d_x$-truncation order. We still suppose that $F$ is primitive with respect to $y$.

\vskip2mm
\noindent
\subsection{An upper bound for the number of factors}\label{ssNumFac}

Example \ref{ex1} suggests to introduce the vector subspace $Z$ of vectors $\mu$ whose associated trunacted polynomial $G_{\mu}$ is null, that is
$$
Z:=\Big\{\mu\in \fp^s\,\,|\,\, \sum_{i=1}^s\mu_i \big[\hat{\F_i} \partial_y \F_i\big]^{d_x+1}=0\Big\}.
$$
We have the following result.

\begin{proposition}\label{NumFac} 
We have $V(\fp)=S\oplus Z$.
\end{proposition}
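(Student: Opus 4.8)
The plan is to prove the two inclusions $S\oplus Z\subseteq V(\fp)$ and $V(\fp)\subseteq S+Z$ separately; the second is the substantial one and will rely on Lemma \ref{gao} applied with $\lp=\kp$. The bookkeeping device for both directions is the elementary identity behind (\ref{maineq}): for $\mu=\sum_{j=1}^r\alpha_j v_j\in S$, taking logarithmic $y$-derivatives of (\ref{prod}) and multiplying by $F$ gives
$$
\sum_{i=1}^s \mu_i\,\hat{\F_i}\partial_y\F_i \;=\; \sum_{j=1}^r \alpha_j\,\hat{F_j}\partial_y F_j ,
$$
whose right-hand side has $x$-degree at most $d_x$ and $y$-degree at most $d_y-1$. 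Consequently $G_\mu=\big[\sum_i\mu_i\hat{\F_i}\partial_y\F_i\big]^{d_x+1}=\sum_j\alpha_j\hat{F_j}\partial_y F_j$, i.e. truncating at order $d_x+1$ loses nothing on $S$.

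First I would handle the easy inclusions. For $\mu\in S$ written as above, evaluating $G_\mu/F$ at a root $y_k$ of $F_j$ and using $\partial_y F(y_k)=\hat{F_j}\partial_y F_j(y_k)$ shows the residue $\rho_k=\alpha_j\in\fp$; this is exactly (\ref{residue}) and gives $S\subseteq V(\fp)$. For $\mu\in Z$ we have $G_\mu=0$, so all residues vanish and $Z\subseteq V(\fp)$ trivially. Directness: if $\mu\in S\cap Z$ then $0=G_\mu=\sum_j\alpha_j\hat{F_j}\partial_y F_j$, and evaluating at a root $y_k$ of each $F_j$ yields $\alpha_j\,\partial_y F(y_k)=0$; since $F$ is separable (hypothesis $(H)$), $\partial_y F(y_k)\neq 0$, so every $\alpha_j=0$ and $\mu=0$. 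Hence $S\cap Z=0$ and $S\oplus Z\subseteq V(\fp)$.

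For the reverse inclusion, take $\mu\in V(\fp)$. Then $G_\mu\in\kp[x,y]$ with $\deg_y G_\mu<d_y$, and all residues $\rho_k$ of $G_\mu/F$ lie in $\fp\subseteq\kp$; Lemma \ref{gao} with $\lp=\kp$ (whose $\kp$-irreducible factors of $F$ are the $F_j$) gives $G_\mu=\sum_{j=1}^r c_j\hat{F_j}\partial_y F_j$ with $c_j\in\kp$, and evaluating at the roots of $F_j$ identifies $c_j$ with a residue, so in fact $c_j\in\fp$. Setting $\nu:=\sum_j c_j v_j\in S$, the identity of the first paragraph gives $G_\nu=\sum_j c_j\hat{F_j}\partial_y F_j=G_\mu$, hence $\mu-\nu\in Z$ and $\mu=\nu+(\mu-\nu)\in S+Z$. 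Combined with the previous step this yields $V(\fp)=S\oplus Z$. The only delicate points are the degree count showing that the $(d_x+1)$-truncation is harmless on $S$, and the observation that although Lemma \ref{gao} a priori only produces a $\kp$-linear combination, its coefficients are residues and therefore automatically belong to the subfield $\fp$ — which is precisely what places $\nu$ in the $\fp$-span $S$ rather than merely in its $\kp$-span; beyond this everything is formal.
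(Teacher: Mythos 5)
Your proof is correct and follows essentially the same route as the paper: both directions rest on the identity $\sum_i\mu_i\hat{\F_i}\partial_y\F_i=\sum_j\alpha_j\hat{F_j}\partial_y F_j$ for $\mu\in S$ together with the degree bound $\deg_x\le d_x$ making the truncation harmless, and the reverse inclusion uses Lemma \ref{gao} to write $G_\mu$ as a combination of the $\hat{F_j}\partial_y F_j$ and subtract off an element of $S$. Your extra care in noting that Lemma \ref{gao} is applied with $\lp=\kp$ and that the coefficients land in $\fp$ because they are residues is a point the paper glosses over, but the argument is the same.
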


\begin{proof}
We already saw that $S\subset V(\fp)$, while the inclusion $Z\subset V(\fp)$ trivially holds. Hence, we get an inclusion $S+Z\subset V(\fp)$. Let us show the reverse inclusion. If $\mu \in V(\fp)$, it follows from Lemma \ref{gao} that $G_{\mu}$ is $\fp$-linear combinations of the irreducible factors of $F$ over $\kp$. It follows that
$$
\sum_{i=1}^s \mu_i [\hat{\F_i} \partial_y\F_i]^{d_x+1}= \sum_{i=1}^s \alpha_i [\hat{\F_i} \partial_y\F_i]^{d_x+1},
$$
for some $\alpha=(\alpha_1,\ldots,\alpha_s)\in S$. In particular, $\mu-\alpha\in Z$. Equality $V(\fp)=S+Z$ follows. Finally, if $\mu\in S\cap Z$ we get that
$$
\sum_{i=1}^s \mu_i \hat{\F_i} \partial_y\F_i=\sum_{i=1}^s \mu_i \big[\hat{\F_i} \partial_y\F_i]^{d_x+1}=0,
$$
so that $\mu=0$ by linear independance of the $\hat{\F_i} \partial_y\F_i$'s. It follows that $V(\fp)=S\oplus W$. 
\end{proof}

\begin{remark}\label{remZ}
We have $Z=0$ as soon as the separability order satisfies $N\le d_x+1$. Namely, we have in that case $S=V(\bar{\kp})$ by Theorem \ref{critic} and we conclude thanks to the inclusion $S\oplus Z= V(\kp)\subset V(\bar{\kp})$. 
\end{remark}

For fields of positive characteristic, we can take $\fp$ as the prime field of $\kp$, in which case the Niederreister operator leads to an explicit system equations for $V(\fp)$ (see Section \ref{S5}). Hence, Proposition \ref{NumFac} allows to compute the number of irreducible factors 
$$
r=\dim_{\fp} \, V(\fp) -\dim_{\fp} \, Z
$$
with linear algebra from the $(d_x+1)$-truncated analytic factors only. For fields of characteristic zero, testing whether the residues lie in $\kp$ is a much harder task. In that case, we only get equations for $V(\bar{\kp})$, so that Proposition \ref{NumFac} {\it a priori} allows only to compute the upper bound 
$$
r\le \dim_{\fp} V(\bar{\kp})-\dim_{\fp} Z.
$$
This problem motivates to explore in more details the relations beetween $V(\fp)$ and $V(\bar{\kp})$.

\subsection{On the relations beetween $V(\fp)$ and $V(\bar{\kp})$}\label{ss3.3}

In regards to the Proposition \ref{NumFac}, we may ask whether equality $V(\fp)=V(\bar{\kp})$ holds. We could not prove nor disprove this equality. However, we give here some conditions under which it holds. Let us first note the following lemma.

\begin{lemma}\label{KKbar}
If all the absolute factors of $F$ are defined over $\kp$, then $V(\kp)=V(\bar{\kp})$.
\end{lemma}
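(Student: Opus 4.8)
The plan is to compare the two vector spaces $V(\kp)$ and $V(\bar{\kp})$ directly through Lemma \ref{gao}, exploiting the hypothesis that the absolute factorization of $F$ already takes place over $\kp$. First I would record the trivial inclusion $V(\kp)\subset V(\bar{\kp})$, which holds because $\kp\subset\bar{\kp}$ inside $\overline{\kp(x)}$, so any $\mu$ whose residues $\rho_k$ lie in $\kp$ certainly has residues in $\bar{\kp}$. The work is the reverse inclusion: given $\mu\in V(\bar{\kp})$, I must show all residues $\rho_k$ of $G_\mu/F$ lie in $\kp$.

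The key step is to apply Lemma \ref{gao} with $\lp=\bar{\kp}$: since $\mu\in V(\bar{\kp})$, the truncated polynomial $G_\mu=\big[\sum_i\mu_i\hat{\F_i}\partial_y\F_i\big]^{d_x+1}$ is a $\bar{\kp}$-linear combination $\sum_j\alpha_j\hat{E_j}\partial_y E_j$, where the $E_j$ run over the irreducible factors of $F$ over $\bar{\kp}$. Now invoke the hypothesis: all absolute factors of $F$ are defined over $\kp$, so each $E_j\in\kp[x,y]$, hence $\hat{E_j}\partial_y E_j\in\kp[x,y]$ as well (recall $\hat{E_j}=F/E_j$). On the other hand $G_\mu\in\kp[x,y]$ by construction, since the $\F_i$ and the $\mu_i$ are over $\kp$ (here $\fp\subset\kp$). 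Because $F$ is separable with respect to $y$, the polynomials $\hat{E_j}\partial_y E_j$ are $\kp$-linearly independent: indeed $E_{j_0}$ divides $\hat{E_j}\partial_y E_j$ for every $j\ne j_0$ but does not divide $\hat{E_{j_0}}\partial_y E_{j_0}$ (as $E_{j_0}$ is coprime to $\hat{E_{j_0}}$ by square-freeness and to $\partial_y E_{j_0}$ by separability), the same argument already used at the end of the proof of Theorem \ref{critic} and of Proposition \ref{NumFac}. A family of polynomials that is linearly independent over $\kp$ remains linearly independent over $\bar{\kp}$, so the expression of $G_\mu$ in the basis $\{\hat{E_j}\partial_y E_j\}$ is unique; comparing the two expansions $G_\mu=\sum_j\alpha_j\hat{E_j}\partial_y E_j$ over $\bar{\kp}$ with the one forced over $\kp$ (which exists because $G_\mu$ and all $\hat{E_j}\partial_y E_j$ lie in the $\kp$-span once we know $G_\mu$ is in their $\bar{\kp}$-span and independence propagates), we get $\alpha_j\in\kp$ for all $j$.

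With $\alpha_j\in\kp$ in hand, the easy direction of Lemma \ref{gao} (its "one direction is clear" part) gives that the residues of $G_\mu/F$ are precisely $\rho_k=\alpha_j\in\kp$, where $j$ is the index with $E_j(x,y_k)=0$. Hence $\mu\in V(\kp)$, establishing $V(\bar{\kp})\subset V(\kp)$ and therefore $V(\kp)=V(\bar{\kp})$. The only point demanding a little care is the descent of the coefficients $\alpha_j$ from $\bar{\kp}$ to $\kp$; I expect this to be the main (though mild) obstacle, and it is handled cleanly by the linear independence of the $\hat{E_j}\partial_y E_j$ over $\kp$ together with the fact that a $\kp$-linearly independent family stays independent after base change to $\bar{\kp}$, so that the coordinates of a vector of $\kp[x,y]$ in such a basis are the same whether computed over $\kp$ or over $\bar{\kp}$.
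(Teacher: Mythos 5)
Your proposal is correct and follows essentially the same route as the paper: apply Lemma \ref{gao} over $\bar{\kp}$, use the hypothesis to get $\hat{E_j}\partial_y E_j\in\kp[x,y]$, descend the coefficients $\alpha_j$ to $\kp$ via the linear independence of the $\hat{E_j}\partial_y E_j$'s, and conclude with the easy direction of Lemma \ref{gao}. The only (immaterial) difference is in the descent step: the paper applies $\tau\in\Aut_{\kp}(\bar{\kp})$ to the identity $G_\mu=\sum\alpha_j\hat{E_j}\partial_y E_j$ and uses $\bar{\kp}$-linear independence to get $\tau(\alpha_j)=\alpha_j$, whereas you invoke uniqueness of coordinates of the $\kp$-rational vector $G_\mu$ in a $\kp$-linearly independent family — both are one-line arguments resting on the same independence fact.
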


\begin{proof}
By Lemma \ref{gao}, if $\mu\in V(\bar{\kp})$, then
$G_{\mu}=\sum \alpha_j \hat{E}_j \partial_y E_j$ for some $\alpha_j\in \bar{\kp}$, and where the $E_j$'s stand for the irreducible absolute factors of $F$. By assumption, we have that $E_j\in \kp[x]$. Applying $\tau\in \Aut_{\kp}(\bar{\kp})$ to the previous equality, and using that $G_{\mu}$ has coefficients in $\kp$, we get that 
$$
\sum_j \tau(\alpha_j)\hat{E}_j \partial_y E_j=\sum_j \alpha_j\hat{E}_j \partial_y E_j,
$$
which implies that $\tau(\alpha_j)=\alpha_j$ by $\bar{\kp}$-linear independance of the $\hat{E}_j \partial_y E_j$'s. This being true for all $\tau$, it follows that $\alpha_j\in \kp$. Hence $\mu\in V(\kp)$ by Lemma \ref{gao}.
\end{proof}

To each rational factor $F_j$ of $F$, we associate the integer
$$
M_j:=\min \Big\{\lfloor q_i\rfloor,\,\,\F_i \,\,{\rm divides}\,\, F_j\,\,{\rm in }\,\,\kp[[x]][y]\Big\}.
$$
Roughly speaking, $M_j$ measures the minimal contact order of the curve $F_j=0$ with the complementary curve $\hat{F_j}=0$ along the line $x=0$. We denote by 
$$
M:=\max\{M_j,\,\,j=1,\ldots,r\}.
$$
Note the obvious relation $M\le N$ with the separability order.

\begin{proposition}\label{prores}
We have $V(\bar{\kp})\cap W^n=V(\fp)\cap W^n$ for all $n>M$. 
\end{proposition}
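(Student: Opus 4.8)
The plan is to mimic the proof of Theorem \ref{critic}, but with the field $\bar{\kp}$ replaced by the subfield $\fp$ (playing the role of $\kp$ or $\fp_p$), and with the cruder separability bound $N$ replaced by the finer invariant $M$. First I would recall that the inclusion $V(\fp)\cap W^n\subset V(\bar{\kp})\cap W^n$ is automatic since $\fp\subset\bar{\kp}$, so only the reverse inclusion needs work. Fix $n>M$ and take $\mu\in V(\bar{\kp})\cap W^n$. By Lemma \ref{gao} applied over $\bar{\kp}$, together with the defining relation of $W^n$, there exist constants $\alpha_j\in\bar{\kp}$ with
$$
\sum_{i=1}^s\mu_i\big[\hat{\F_i}\partial_y\F_i\big]^m=\sum_{j=1}^\ell \alpha_j\hat{E_j}\partial_y E_j,
$$
where $m=\max(d_x+1,n)$ and the $E_j$ are the irreducible factors of $F$ over $\bar{\kp}$. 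Evaluating at a root $\phi$ of $\F_i$ and using $\hat{\F_i}\partial_y\F_i(\phi)=\hat{E_j}\partial_y E_j(\phi)=\partial_y F(\phi)$ (for the index $j$ with $E_j(\phi)=0$), I get $(\mu_i-\alpha_j)\partial_y F(\phi)=x^m R(\phi)$ for some $R\in\kp[[x]][y]$ of $y$-degree $\le d_y-2$, exactly as in Theorem \ref{critic}; the valuation estimate $\val_x R(\phi)\ge -(d_y-2)n_i/d_i$ from Lemma \ref{lval} then forces $m\le q_i$ unless $\mu_i=\alpha_j$.

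The new ingredient is that here I only need $m>\lfloor q_i\rfloor$ for those $i$ with $\F_i$ dividing the particular rational factor $F_j$ whose absolute factors include $E_j$; but the point is subtler, so let me reorganize. The right way: group the $\F_i$ appearing with $\mu_i\ne\alpha_j$ and observe they must all divide a common rational factor. Actually the cleanest route is to first deduce, from $m>N$... no — since only $m>M$ is assumed, I must argue factor by factor. Given a rational factor $F_j$, let $\F_{i}$ run over its analytic divisors; by definition $M_j=\min_i\lfloor q_i\rfloor$, and there is at least one $\F_{i_0}$ dividing $F_j$ with $\lfloor q_{i_0}\rfloor=M_j<m$, hence $\mu_{i_0}=\alpha_{j}$ where $\alpha_j$ is the (common, by Lemma \ref{gao}) residue attached to the absolute factors of $F_j$. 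But Lemma \ref{gao}, applied over $\bar{\kp}$, tells us the $\alpha$ attached to all absolute factors $E_{j'}$ of one rational $F_j$ coincide only if $F_j$ is absolutely irreducible, which we do not know. So the genuinely careful step is: for each absolutely irreducible factor $E_{j'}$ of $F_j$, pick an analytic $\F_{i}\mid E_{j'}$ with $\lfloor q_i\rfloor\le M_j$ — such exists because $\lfloor q_i\rfloor\le M_j$ for at least one analytic divisor of $F_j$, and I must check this one can be chosen dividing $E_{j'}$; this follows because the $q_i$ are permuted by $\Aut_\kp(\bar\kp)$ acting on the $E_{j'}$, so the minimum is attained on each Galois orbit, i.e. on each $E_{j'}$. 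Then $m>M_j\ge\lfloor q_i\rfloor$ gives $\mu_i=\alpha_{j'}$, so each $\alpha_{j'}$ equals some coordinate $\mu_i\in\fp$. Hence all $\alpha_j\in\fp$, and Lemma \ref{gao} (now read over $\fp$, or rather: $G_\mu$ is an $\fp$-combination of the $\hat{E_j}\partial_y E_j$, so its residues lie in $\fp$) yields $\mu\in V(\fp)$.

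The main obstacle I anticipate is precisely this last bookkeeping: controlling that the minimum defining $M_j$ is realized along \emph{each} absolute factor $E_{j'}$ of $F_j$, not merely somewhere among the analytic divisors of $F_j$. I expect this to come out of Galois invariance of the quantities $r_i,\delta_i,d_i$ (hence of $q_i$) under $\Aut_\kp(\bar\kp)$ permuting the analytic factors lying over conjugate absolute factors — the analytic factors dividing a fixed $E_{j'}$ form a single such orbit, so they all share the same $\lfloor q_i\rfloor$, and that common value is $\ge M_j$ with equality for at least one orbit inside $F_j$; but in fact \emph{every} orbit inside $F_j$ has $\lfloor q_i\rfloor\ge M_j$, and for the argument I only need, orbit by orbit, one representative, which is free. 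So no sharpness in $M_j$ per orbit is actually required — only $m>M_j\ge\lfloor q_i\rfloor$ for the chosen representatives, which holds since $m>M=\max_j M_j\ge M_j\ge \lfloor q_i\rfloor$ whenever $\F_i\mid F_j$. Once $\mu_i=\alpha_{j'}$ is established for all $i$, the concluding step — that the residues of $G_\mu/F$ then lie in $\fp$ and hence $\mu\in V(\fp)$ — is immediate from Lemma \ref{gao}, and no further coprimality or linear-independence argument beyond what Theorem \ref{critic} already uses is needed.
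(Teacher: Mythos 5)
Your overall strategy is the paper's: rerun the proof of Theorem \ref{critic} factor by factor, using for each rational factor $F_j$ an analytic divisor realizing the minimum $M_j$, and you correctly isolate the one genuinely new difficulty, namely connecting a low-$q_i$ analytic factor to \emph{every} absolutely irreducible factor $E_k$ of $F_j$. But your resolution of that difficulty fails as written. The closing chain ``$m>M=\max_j M_j\ge M_j\ge\lfloor q_i\rfloor$ whenever $\F_i\mid F_j$'' is backwards: $M_j$ is the \emph{minimum} of the $\lfloor q_i\rfloor$ over the analytic divisors of $F_j$, so $M_j\le\lfloor q_i\rfloor$, and an arbitrary $\F_i$ sharing a root with $E_{j'}$ may perfectly well have $q_i\ge m$ (a single polynomial can have one branch with $q_i$ of order $d_xd_y$ and all others with $q_i=0$, as in Example \ref{ex2}). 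So ``one representative per orbit, which is free'' is not free: you need a representative with $\lfloor q_i\rfloor=M_j$. Moreover the Galois picture you invoke to produce one is misapplied: $\Aut_\kp(\bar\kp)$ does not permute the $\F_i$'s (they lie in $\kp[[x]][y]$ and are fixed), the $\F_i$'s do not divide the $E_{j'}$'s, and the analytic divisors of $F_j$ meeting a fixed $E_{j'}$ do not form one orbit sharing a common $\lfloor q_i\rfloor$.

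The correct repair --- which is what the paper's terse assertion ``$E_k$ shares at least one root with $\F_i$'' encodes --- is the opposite incidence statement: fix the \emph{single} minimizer $\F_{i_0}$, with $\lfloor q_{i_0}\rfloor=M_j<m$. Write $\F_{i_0}$ as a product of absolutely irreducible analytic factors; if one of them divides some $E_k\mid F_j$, then applying any $\tau\in\Aut_\kp(\bar\kp)$, which fixes $\F_{i_0}$ and permutes transitively the absolutely irreducible factors of the $\kp$-irreducible $F_j$, shows that $\F_{i_0}$ shares a root with \emph{every} $E_k\mid F_j$. Evaluating the identity at those common roots gives $\alpha_k=\mu_{i_0}\in\fp$ for all $E_k\mid F_j$, and the remainder of your argument (splitting the degree-$m$ identity into its $[\,\cdot\,]^{d_x+1}$ and $[\,\cdot\,]_{d_x+1}^{m}$ parts to conclude $\mu\in V(\fp)\cap W^n$) then goes through.
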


\begin{proof} The proof is similar to that of Theorem \ref{critic}. Let $n>M$ and denote by $m:=\max\{n,d_x+1\}$. By Lemma \ref{gao} and by definition of $W^n$ we have $\mu  \in V(\bar{\kp})\cap W^n$ if and only if
\begin{equation}\label{eqphi}
\sum_{i=1}^s \mu_i [\hat{\F_i} \partial_y\F_i]^{m}=\sum_{k=1}^t \alpha_{k} \hat{E}_{k}\partial_y E_{k},
\end{equation}
where the $E_{k}\in\bar{\kp}[x,y]$ stand for the absolutely irreducible factors of $F$. Let us fix $F_j$ a rational factor of $F$. By assumption, there exists $\F_i$ a divisor of $F_j$ such that $q_i\le n$. Let $E_{k}$ be a divisor of $F_j$. Then $E_k$ shares at least one root $\phi\in\overline{\kp(x)}$ with $\F_i$. Hence, by evaluating (\ref{eqphi}) at $\phi$ we get that
$$
(\mu_i-\alpha_k)\partial_y F(\phi)=x^m R(\phi)
$$
for some $R\in \kp[[x]][y]$. Taking $x$-valuations, and reasonning as in the proof of Theorem \ref{critic}, we get that $\mu_i\ne \alpha_k$ implies $q_i\ge m$, a contradiction. Hence $\alpha_k=\mu_i\in \fp$  for all irreducible factors $E_k$ of $F_j$. Repeating this reasonning for all factors $F_j$ of $F$, we deduce by regrouping the factors $E_k$ by conjugacy classes that we have
$$
\sum_{i=1}^s \mu_i [\hat{\F_i} \partial_y\F_i]^{m}=\sum_{j=1}^s c_j \hat{F}_j\partial_y F_j,
$$
for some $c_j$'s in $\fp$. For a degree reason, this is equivalent to that 
$$
\sum_{i=1}^s \mu_i [\hat{\F_i} \partial_y\F_i]^{d_x+1}=\sum_{j=1}^s c_j \hat{F}_j\partial_y F_j\quad {\rm and}\quad \sum_{i=1}^s \mu_i [\hat{\F_i} \partial_y\F_i]_{d_x+1}^m=0
$$
The first equation is equivalent to that $\mu\in V(\fp)$ by Lemma \ref{gao}, while second equation is equivalent to that $\mu\in W^n$ by definition.  
\end{proof}

Last Proposition says in particular that if each irreducible rational factor of $F$ has at least one branch with $q_i\le d_x$, then we have $V(\bar{\kp})=V(\fp)$.  This is the case for instance in Example \ref{ex1}, Section \ref{S2}. Here is a trivial example that illustrates that the converse doesn't hold.

\begin{example}\label{exlin}
Let $F(x,y)=((y-x)^2+y^{10})(y-x)\in \qp[x,y]$. Then $F$ has exactly $3$ anaytic factors over $\qp$ which satisfy
$$
\F_1\equiv (y-x)^2\mod x^4,\quad \F_2\equiv y-x\mod x^4, \quad \F_3=y^8+1+\cdots. 
$$
We find here that $q_1=q_2=10$. In particular, the rational factor $y-x$ of $F$ has a unique branch $\F_2$ and this branch satisfies $q_2>d_x+1=4$. On another hand, we have that
$$
\F_1\equiv \F_2^2\mod x^{d_x+1}\Longrightarrow (1,-2,0)\in Z
$$
and we can show that this is the only possible relation. Hence $\dim Z=1$. Since clearly $\dim S=2$, it follows from Proposition \ref{NumFac} that $\dim V(\qp)=3$. Since $s=3$ is the dimension of the ambient space, it follows that $V(\bar{\qp})=V(\qp)=\qp^3$. Observe that we could not use directly Lemma \ref{KKbar} to show this equality since $F$ has two absolute factors $y-x+iy^5$ and $y-x+iy^5$ that are not defined over $\qp$. 
\end{example}

\subsection{Number of factors. Irreducibility test.}\label{ss3.4}

Proposition \ref{prores} leads to a formula for $r$ that depends only on the $M$-truncated factors:

\begin{corollary}\label{propratres}
The number of rational factors is equal to
$$
r=\dim \,V(\bar{\kp})\cap W^{M+1}-\dim \,Z\cap W^{M+1},
$$
hence can be computed with the only truncated precision $\max(d_x+1,M+1)$.
\end{corollary}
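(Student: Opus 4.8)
The plan is to read off the formula by combining Proposition \ref{NumFac} and Proposition \ref{prores}, the only extra input being the elementary fact, recorded just before Theorem \ref{critic}, that $S\subset W^{n}$ for every $n\in\np$ (immediate from (\ref{maineq}) for a degree reason).

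First I would apply Proposition \ref{prores} with $n=M+1$, which is legitimate since $M+1>M$; this gives $V(\bar{\kp})\cap W^{M+1}=V(\fp)\cap W^{M+1}$, so it is enough to compute the dimension of the right-hand side. For this I would intersect the direct sum decomposition $V(\fp)=S\oplus Z$ of Proposition \ref{NumFac} with $W^{M+1}$. Since $S\subset W^{M+1}$, one checks at once that $V(\fp)\cap W^{M+1}=S\oplus\bigl(Z\cap W^{M+1}\bigr)$: the inclusion $\supset$ is clear, and if $\mu=\sigma+\zeta\in V(\fp)\cap W^{M+1}$ with $\sigma\in S$ and $\zeta\in Z$, then $\zeta=\mu-\sigma\in W^{M+1}$ because $W^{M+1}$ is a subspace containing both $\mu$ and $\sigma$; directness is inherited from $S\cap Z=0$. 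Taking dimensions and using $\dim_{\fp}S=r$ (the $v_j$'s have pairwise disjoint supports, hence are $\fp$-linearly independent), I obtain $\dim\bigl(V(\bar{\kp})\cap W^{M+1}\bigr)=r+\dim\bigl(Z\cap W^{M+1}\bigr)$, which is the claimed identity after rearranging.

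For the precision statement I would observe that each of the three spaces $V(\bar{\kp})$, $W^{M+1}$ and $Z$ is cut out by $\fp$-linear equations in $\mu$ whose coefficients are read from the truncated polynomials $\bigl[\hat{\F_i}\,\partial_y\F_i\bigr]$ modulo $x^{\max(d_x+1,\,M+1)}$: the spaces $V(\bar{\kp})$ and $Z$ only involve the $(d_x+1)$-truncation $G_{\mu}$, with the explicit equations for $V(\bar{\kp})$ produced in Section \ref{S5}, while $W^{M+1}$ by definition involves exactly the homogeneous components of $x$-degree between $d_x+1$ and $M$; computing the three dimensions is then plain linear algebra.

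I do not expect a genuine obstacle here: once Propositions \ref{NumFac} and \ref{prores} are available the corollary is a formal consequence. The only point deserving a line of care is that Proposition \ref{prores} requires the strict inequality $n>M$, which forces the choice $W^{M+1}$ rather than $W^{M}$ and is exactly what makes the resulting precision $\max(d_x+1,M+1)$ rather than $\max(d_x+1,M)$.
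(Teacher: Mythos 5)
Your proposal is correct and follows essentially the same route as the paper: apply Proposition \ref{prores} with $n=M+1$, intersect the decomposition $V(\fp)=S\oplus Z$ of Proposition \ref{NumFac} with $W^{M+1}$ using $S\subset W^{M+1}$ to get $S\oplus(Z\cap W^{M+1})$, and count dimensions. Your explicit verification of the modular-law step and of $\dim_{\fp}S=r$ only makes the paper's one-line argument more careful.
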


\begin{proof}
We know from Proposition \ref{NumFac} that $V(\fp)=S\oplus Z$. Intersecting with $W^n$, and using that $S\subset W^n$, we get that
$$
S\oplus (Z\cap W^n)=(S\oplus Z)\cap W^n =V(\fp)\cap W^n= V(\bar{\kp})\cap W^n.
$$
for all $n>M$, the last equality thanks to Proposition \ref{prores}. The corollary follows by counting dimensions. 
\end{proof}

Of course, we can not {\it a priori} compute $M$ without knowing the rational factorization so that Corollary \ref{propratres} seems to be useless from a computational point of view. However, it leads to an irreducibility test over $\kp$ with the only $2d_x$-truncated precision.

\begin{corollary}\label{propirr}
The polynomial $F$ is irreducible over $\kp$ if and only if 
$$
\dim\,V(\bar{\kp})\cap W^{2d_x}-\dim\, Z\cap W^{2d_x}=1.
$$
\end{corollary}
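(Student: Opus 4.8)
The plan is to derive the irreducibility test directly from Corollary \ref{propratres} by bounding $M$ in terms of $d_x$. First I would observe that $F$ is irreducible over $\kp$ if and only if $r=1$, and that by Corollary \ref{propratres} we have the formula $r=\dim V(\bar\kp)\cap W^{M+1}-\dim Z\cap W^{M+1}$, valid for the truncation precision $\max(d_x+1,M+1)$. So the whole point is to show that when $F$ \emph{is} irreducible, the relevant precision never exceeds $2d_x$, i.e. that $M+1\le 2d_x$, or equivalently $M\le 2d_x-1$; and conversely that the displayed quantity with $W^{2d_x}$ correctly detects $r=1$.

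For the forward direction, suppose $F$ is irreducible. Then there is a single rational factor $F_1=F$, so $M=M_1=\min\{\lfloor q_i\rfloor : \F_i \text{ divides } F\}=\min_i\lfloor q_i\rfloor$. By Lemma \ref{order}(1) we have $q_1d_1+\cdots+q_sd_s=\val_x\Disc_y(F)\le d_x(2d_y-1)$, hence $\min_i q_i \le \val_x\Disc_y(F)/(d_1+\cdots+d_s)=\val_x\Disc_y(F)/d_y\le (d_x(2d_y-1))/d_y < 2d_x$. Therefore $M=\min_i\lfloor q_i\rfloor \le \min_i q_i < 2d_x$, so $M\le 2d_x-1$ and $M+1\le 2d_x$. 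Since $d_x+1\le 2d_x$ as well (for $d_x\ge 1$), the precision $2d_x$ suffices, and moreover $W^{2d_x}\subseteq W^{M+1}$ is not quite what I want — rather I will use that for any $n>M$ the argument of Proposition \ref{prores} and Corollary \ref{propratres} gives $\dim V(\bar\kp)\cap W^{n}-\dim Z\cap W^{n}=r$; applying this with $n=2d_x>M$ yields $\dim V(\bar\kp)\cap W^{2d_x}-\dim Z\cap W^{2d_x}=r=1$.

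For the converse, I would argue that the quantity $\dim V(\bar\kp)\cap W^{n}-\dim Z\cap W^{n}$ is always an upper bound for $r$, with equality once $n>M$. Indeed intersecting the decomposition $V(\fp)=S\oplus Z$ of Proposition \ref{NumFac} with $W^n$ and using $S\subseteq W^n$ gives $V(\fp)\cap W^n=S\oplus(Z\cap W^n)$, so $\dim V(\fp)\cap W^n-\dim Z\cap W^n=\dim S=r$ for \emph{every} $n$; and since $V(\fp)\subseteq V(\bar\kp)$ we get $\dim V(\bar\kp)\cap W^n-\dim Z\cap W^n\ge r$ in general. Hence if this difference equals $1$ for $n=2d_x$, then $r\le 1$, so $r=1$ and $F$ is irreducible. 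Combining the two directions proves the equivalence. The only delicate point — and the one I would double-check carefully — is the edge case $d_x=1$ (where $2d_x=d_x+1$ and one must make sure $M<2d_x$ still holds, which it does since then $s=d_y$ forces $M=0$), and the bookkeeping that $V(\bar\kp)\cap W^{2d_x}=V(\fp)\cap W^{2d_x}$ is exactly what Proposition \ref{prores} gives for $n=2d_x>M$, so that the displayed formula is genuinely computable from the $2d_x$-truncated analytic factors without prior knowledge of $M$.
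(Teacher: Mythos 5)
Your proof is correct and follows essentially the same route as the paper: both directions reduce to Corollary \ref{propratres}, the forward one via the bound $M<2d_x$ for irreducible $F$ (the paper obtains it as the contrapositive of your weighted-average estimate $\min_i q_i\le \val_x\Disc_y(F)/d_y<2d_x$), and the converse via the observation that $\dim V(\bar{\kp})\cap W^n-\dim Z\cap W^n\ge \dim S=r$ holds for every $n$. One small remark: your parenthetical justification for the case $d_x=1$ (``$s=d_y$ forces $M=0$'') is a non sequitur, since $d_x=1$ does not imply $s=d_y$, but it is also unnecessary because your general bound $M<2d_x$ already covers that case.
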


\begin{proof}
Suppose that $\dim\,V(\bar{\kp})\cap W^{2d_x}-\dim\, Z\cap W^{2d_x}=1$. Since the inclusion $W^{2d_x}\subset W^n$ holds for all $n\ge 2d_x$ we deduce from Corollary \ref{propratres} that $\dim S=1$. Suppose now that $F$ is irreducible over $\kp$. Then it's enough to show that $M< 2d_x$ by the same argument. 
Suppose on the contrary that $M\ge 2d_x$. Since $F$ is irreducible, we have by definition of $M$ that $q_i\ge 2d_x$ for all $i$. It follows from Lemma \ref{order} that 
$$
\val_x(\Disc_y F)=\sum_{i=1}^s q_i d_i\ge 2d_x \sum_{i=1}^s d_i =2d_xd_y,
$$
which is impossible for a degree reason. 
\end{proof}

\subsection{A combinatorial approach for solving recombinations}\label{ss4.4}

We show here that under some reasonnable conditions, we can compute the factorization of $F$ just by knowing $V(\fp)$ and $Z$, hence from the $(d_x+1)$-truncated analytic factors only. Let us introduce the subset
$$
I:=\Big\{i\in \{1,\ldots,s\},\, \, \, \mu\in Z \Rightarrow \mu_i=0\Big\}.
$$
and let 
$$
L:=\Big\{\mu \in \fp^s,\,\,\, \mu_i=0\,\,\forall \,i\notin I\Big\}.
$$
We denote by $\pi : \fp^s\to L$ the natural projection on $L$. 

\begin{definition} We say that $F$ is reasonnably ramified over $x=0$ if $$\dim\, \pi(V(\fp))=\dim V(\fp) - \dim Z.$$ 
\end{definition}

In other words $F$ is reasonnably ramified if and only if for all $j=1,\ldots,r$, there is an analytic factor $\F_i$ of $F_j$ such that $\mu\in Z$ implies $\mu_i=0$. In particular, if $M\le d_x+1$, then $F$ is reasonnably ramified thanks to the proof of Proposition \ref{prores}. Note that for fields with positive characteristic, we can test if $F$ is reasonnably ramified since we can then compute $V(\fp)$, $Z$ and $L$ (see Subsection \ref{smallchar}). In characteristic zero, this will be the case if we know moreover that $V(\bar{\kp})=V(\fp)$.


\begin{proposition}\label{pcomb}
Suppose that $\kp$ has characteristic zero or strictly greater than $d_y$. Suppose that $F$ is reasonnably ramified over $x=0$. Suppose that $V(\fp)=Z\oplus T$ for some vector subspace $T$ whose reduced echelon basis $(w_1,\ldots,w_r)$ form a partition of $(1,\ldots,1)$. Then, 
$$
F_j=\prim\Big[\lc(F)\prod_{i=1}^s \F_i^{w_{ji}}\Big]^{d_x+1}\quad j=1,\ldots,r
$$
and all rational factors $F_j$  of $F$ can be computed within $\wt{O}(d_x d_y)$ field operations.
\end{proposition}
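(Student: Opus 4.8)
The plan is to show first that, after relabelling, each $w_j$ differs from the ``true'' combination vector $v_j:=e_{A_j}$ of $F_j$ (where $A_j:=\{i:\F_i\mid F_j\}$ is the support of $v_j$) only by an element of $Z$, and then to convert this into a congruence $\prod_{i\in B_j}\F_i\equiv F_j/c_j\pmod{x^{d_x+1}}$, $B_j$ being the block of $w_j$. For the first point I would argue as follows. By Proposition~\ref{NumFac} we have $V(\fp)=S\oplus Z$, while $V(\fp)=Z\oplus T$ by hypothesis, so $T$ is a second complement of $Z$. The projection $\pi\colon\fp^s\to L$ vanishes on $Z$ (this is the definition of $I$), so $\pi(S)=\pi(V(\fp))=\pi(T)$; since $F$ is reasonably ramified, $\dim\pi(V(\fp))=\dim V(\fp)-\dim Z=r$, hence $\pi$ is injective on both $S$ and $T$. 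Therefore each of $A_j\cap I$ and $B_j\cap I$ is nonempty, and $\pi(S)$ (resp.\ $\pi(T)$) is spanned by the pairwise disjoint indicator vectors $e_{A_j\cap I}$ (resp.\ $e_{B_j\cap I}$); as a subspace spanned by the indicator vectors of a partition determines that partition, the equality $\pi(S)=\pi(T)$ forces $\{A_j\cap I\}=\{B_j\cap I\}$. After relabelling so that $A_j\cap I=B_j\cap I$, decomposing $w_j=\sum_l\beta_lv_l+z$ in $S\oplus Z$ and applying $\pi$ pins down $\beta_j=1$ and $\beta_l=0$ for $l\neq j$, whence $z_j:=w_j-v_j\in Z$.

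Next I would set $G_\mu:=\sum_i\mu_i[\hat\F_i\,\partial_y\F_i]^{d_x+1}$ (an $\fp$-linear function of $\mu$). Using $\sum_{i\in A_j}\hat\F_i\,\partial_y\F_i=F\,\partial_y\log\big(\prod_{i\in A_j}\F_i\big)=F\,\partial_y\log F_j=\hat F_j\,\partial_yF_j$ (with $\partial_y\log g:=\partial_yg/g$), together with $\deg_x(\hat F_j\,\partial_yF_j)\le d_x$, I get $G_{v_j}=\hat F_j\,\partial_yF_j$; and $G_{z_j}=0$ since $z_j\in Z$, so $G_{w_j}=\hat F_j\,\partial_yF_j$. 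Writing $\Phi_j:=\prod_{i\in B_j}\F_i$, this says that
$$
F\,\partial_y\log(\Phi_j/F_j)\;=\;\sum_{i\in B_j}\hat\F_i\,\partial_y\F_i-\hat F_j\,\partial_yF_j\;\equiv\;0\pmod{x^{d_x+1}}.
$$
Writing $\Phi_j/F_j=\frac1{c_j}\cdot\frac PQ$ with $P:=\prod_{i\in B_j\setminus A_j}\F_i$ and $Q:=\prod_{i\in A_j\setminus B_j}\F_i$ coprime, the left-hand side equals $uR\,(Q\,\partial_yP-P\,\partial_yQ)$, where $R:=\prod_{i\notin A_j\triangle B_j}\F_i$ and $u$ is the unit of~(\ref{factoan}). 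Since $u(0)\neq0$ and $R$ is monic in $y$ (using that the $\F_i$ are monic in $y$, which may be assumed after a preliminary reduction), $uR$ is a nonzerodivisor modulo $x^{d_x+1}$, and cancelling it gives $Q\,\partial_yP-P\,\partial_yQ\equiv0\pmod{x^{d_x+1}}$. I would then integrate this relation one $x$-degree at a time: at the bottom level the reductions $\bar P,\bar Q\in\kp[y]$ modulo $x$ satisfy $\partial_y\log(\bar P/\bar Q)=0$ in $\kp(y)$, which forces $\bar P$ proportional to $\bar Q$ since $\bar P,\bar Q$ have $y$-degree $\le d_y$ and $\chara\kp=0$ or $>d_y$, and one then lifts coefficient by coefficient in $x$; this produces $P\equiv\lambda Q\pmod{x^{d_x+1}}$ for a unit $\lambda\in\kp[[x]]$, and comparing the coefficients of the top power of $y$ of the monic polynomials $P,Q$ forces $\lambda\equiv1$. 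Multiplying $P\equiv Q$ by $\prod_{i\in A_j\cap B_j}\F_i$ then yields $\Phi_j\equiv\prod_{i\in A_j}\F_i=F_j/c_j\pmod{x^{d_x+1}}$.

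From here the formula follows: $\lc(F)\,\Phi_j\equiv\frac{\lc(F)}{c_j}F_j\pmod{x^{d_x+1}}$, and $\frac{\lc(F)}{c_j}\in\kp[x]$ since $c_j\mid\lc(F_j)\mid\lc(F)$. Because $\lc(F)=\prod_l\lc(F_l)$, $\deg_x\lc(F_l)\le\deg_xF_l$, and $\sum_l\deg_xF_l=\deg_xF=d_x$, we have $\deg_x\big(\frac{\lc(F)}{c_j}F_j\big)\le(d_x-\deg_xF_j)+\deg_xF_j=d_x$, so the $(d_x+1)$-truncation loses nothing: $[\lc(F)\,\Phi_j]^{d_x+1}=\frac{\lc(F)}{c_j}F_j$. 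Since $F_j$ is primitive in $y$ and $\frac{\lc(F)}{c_j}\in\kp[x]$, taking primitive parts gives $F_j=\prim[\lc(F)\prod_{i\in B_j}\F_i]^{d_x+1}$. For the complexity: the $B_j$ partition $\{1,\dots,s\}$, so $\sum_j\sum_{i\in B_j}\deg_y\F_i=d_y$, and computing all the products $\Phi_j$ modulo $x^{d_x+1}$ by subproduct trees costs $\wt{\sO}(d_y)$ operations in $\kp[x]/(x^{d_x+1})$, that is $\wt{\sO}(d_xd_y)$ operations in $\kp$; the final multiplications by $\lc(F)$, the truncations, and the content removals (a gcd of at most $d_y$ polynomials of degree $\le d_x$) stay within the same bound.

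I expect the main obstacle to be the second step, namely the passage from the additive identity $G_{w_j}=\hat F_j\,\partial_yF_j$ to the multiplicative congruence $\prod_{i\in B_j}\F_i\equiv F_j/c_j\pmod{x^{d_x+1}}$. This requires cancelling the common factor $F/(PQ)$ — precisely the point where one needs the analytic factors to be monic in $y$, so that $F/(PQ)$ is a nonzerodivisor modulo $x^{d_x+1}$ — and then running the ``integration'' argument, which is where the hypothesis $\chara\kp=0$ or $>d_y$ enters. The leading-coefficient bookkeeping (the role of $\lc(F)$ and of $\prim$ in the formula) is routine but has to be carried out with care, as it relies on the normalisation of $F$.
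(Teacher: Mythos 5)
Your argument is, in substance, the paper's own proof. The first half --- using $V(\fp)=S\oplus Z=Z\oplus T$, the vanishing of $\pi$ on $Z$ and the reasonable-ramification hypothesis to force $\pi(w_j)=\pi(v_j)$ after relabelling, hence $w_j-v_j\in Z$ --- is exactly the paper's first step, just written out in more detail. For the second half the paper simply invokes its Lemma~\ref{lprod}, which says that (under the same characteristic hypothesis) $\mu\in Z$ is equivalent to $\prod_i f_i^{\mu_i}=1$ in $R=\kp[[x]]/(x^{d_x+1})$; the proof of that lemma is precisely your cancellation-plus-integration argument in multiplicative form ($\sum\mu_i\hat f_i\partial_y f_i=f\cdot(\prod f_i^{\mu_i})'/\prod f_i^{\mu_i}$, so $\mu\in Z$ forces $(\prod f_i^{\mu_i})'=0$, which under the degree/characteristic constraint forces the product to be a constant of $R$). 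So you have re-derived that lemma inline for the specific vector $w_j-v_j$; the mathematical content is the same, and your coefficient-by-coefficient lifting of $\bar P=\lambda_0\bar Q$ is a valid way to carry it out.

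The one point to tighten is the monicity of the $\F_i$. They are normalized with leading coefficient $x^{n_i}$, and when $\lc_y(F)(0)=0$ some $n_i>0$; the ``preliminary reduction'' to the monic case is a Moebius transformation $y\mapsto\alpha+1/y$ (available here since $\chara\kp=0$ or $>d_y$ forces $\Card(\kp)>d_y$, cf.\ Remark~\ref{rem1}), but it changes $F$ and hence does not literally prove the displayed identity for the original coordinates. Note that you do not need monicity for the cancellation: each $\F_i$ is irreducible of positive $y$-degree, hence not divisible by $x$, hence a nonzerodivisor of $R[y]$, and so is $uR$. Monicity genuinely enters only at the normalization of $\lambda$ (comparing leading terms gives $x^a\equiv\lambda x^b$ with $a=\sum_{i\in B_j\setminus A_j}n_i$, $b=\sum_{i\in A_j\setminus B_j}n_i$, which pins $\lambda$ down only modulo $x^{d_x+1-a}$) and in the final degree bound $\deg_x(\lc(F)F_j/c_j)\le d_x$, which can fail when $\val_x\lc(F_j)>0$: already for $F=(xy+1)(xy+2)$ one has $\lc(F)\F_1=x^3y+x^2$, whose truncation modulo $x^3$ has primitive part $1\ne xy+1$, so the displayed formula really does require the monic normalization (the paper's Step~7 correctness argument makes the same tacit assumption via $c_j=\lc(F_j)$). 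So either state the proposition for $\lc_y(F)(0)\ne 0$, or perform the Moebius reduction explicitly and undo it at the end; with that proviso your proof is complete.
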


Here, $\prim$ stands for the primitive part with respect to $y$. In order to prove Proposition \ref{pcomb}, we first need a key lemma. We denote by $R:=\kp[[x]]/(x^{d_x+1})$ and by $f_i$ the class of $\F_i$ in $R$. Since $F$ is not divisible by $x$, we have that $f_i\in R^*[y]$, where $R^*$ stands for the multiplicative group of non zero divisors. In particular, it makes sense to compute $f_i^{k}$ in the total ring of fractions of $R[y]$ for any integer $k\in\zp$. 

\begin{lemma}\label{lprod}
Suppose that $\kp$ has characteristic zero or strictly greater than $d_y$, and let $\mu\in \{0,1\}^s$. Then $\mu\in Z$ if and only if $\prod_{i=1}^r f_i^{\mu_i}=1\in R$. If $\kp$ has characteristic zero, the same conclusion holds with the weaker hypothesis $\mu\in \zp^s$. 
\end{lemma}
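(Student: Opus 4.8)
The plan is to run exactly the logarithmic‑derivative reduction used in the proof of Theorem~\ref{critic}, but this time inside the total ring of fractions of $R[y]$, and then to turn the vanishing of a logarithmic derivative into the equality $\prod_i f_i^{\mu_i}=1$. First I would record that each $f_i$, and also the class $\bar F$ of $F$, is a non‑zero‑divisor of $R[y]$: since $\F_i\neq x$ (because $F$ is not divisible by $x$) the reduction $f_i\bmod x=\F_i(0,y)$ is a nonzero element of $\kp[y]$, and, the ideal $(x)$ being nilpotent in $R$, a polynomial of $R[y]$ with nonzero reduction modulo $x$ is automatically a non‑zero‑divisor; the same applies to $\bar F$ because $F$ is primitive with respect to $y$. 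Hence $g:=\prod_{i=1}^{s}f_i^{\mu_i}$ and $\partial_y g/g=\sum_i\mu_i\,\partial_y f_i/f_i$ make sense in that total ring of fractions, and the identity $\sum_{i=1}^{s}\mu_i\,\widehat{f_i}\,\partial_y f_i=\bar F\cdot(\partial_y g/g)$ holds there, where $\widehat{f_i}$ is the class of $\hat{\F_i}=F/\F_i$. Since $\bar F$ is a non‑zero‑divisor, the condition $\mu\in Z$ — i.e. $\sum_i\mu_i\widehat{f_i}\partial_y f_i=0$ in $R[y]$ — becomes equivalent to $\partial_y g=0$. The implication $g=1\Rightarrow\partial_y g=0\Rightarrow\mu\in Z$ is then immediate, so only the converse requires work.

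Assume $\partial_y g=0$; I must show $g=1$. When $\mu\in\{0,1\}^{s}$ (the only case needed if $\chara\kp>d_y$) this is direct: if $\mu\neq 0$, then $g$ is the class of $A_0:=\prod_{\mu_i=1}\F_i\in\kp[[x]][y]$, whose $y$‑degree $\delta:=\sum_{\mu_i=1}d_i$ lies in $\{1,\dots,d_y\}$ and whose leading coefficient is $x^{a}$ with $a:=\sum_{\mu_i=1}n_i\le\sum_i n_i\le d_x$, the last inequality because $\lc_y F$ has $x$‑degree at most $d_x$. Thus $x^{a}\neq 0$ in $R$, and since $\delta$ is invertible modulo $\chara\kp$, the coefficient $\delta x^{a}$ of $y^{\delta-1}$ in $\partial_y g$ is nonzero, contradicting $\partial_y g=0$. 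Hence $\mu=0$ and $g=1$.

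For general $\mu\in\zp^{s}$ in characteristic zero I would proceed in two steps, writing $g=A/B$ with $A:=\prod_{\mu_i>0}f_i^{\mu_i}$ and $B:=\prod_{\mu_i<0}f_i^{-\mu_i}$, so that $\partial_y g=0$ reads $(\partial_y A)B=A\,\partial_y B$ in $R[y]$. Step 1: $g\in R$. I would prove by induction on $d_x$ that the $\partial_y$‑kernel of the total ring of fractions of $(\kp[[x]]/(x^{d_x+1}))[y]$ is exactly $R$: reduce the relation modulo $x^{d_x}$, apply the inductive hypothesis to the image of $g$ in $R/(x^{d_x})$, lift the resulting constant, and annihilate the residual $x^{d_x}$‑error using that $\ker\partial_y=\kp$ in $\kp(y)$. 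Since $A$ and $B$ are non‑zero‑divisors of $R[y]$, $g\in R$ in fact forces $g\in R^{*}$, as otherwise $A=gB$ would be a zero‑divisor. Step 2: $g=1$. Lifting to the UFD $\kp[[x]][y]$, the polynomials $A_0:=\prod_{\mu_i>0}\F_i^{\mu_i}$ and $B_0:=\prod_{\mu_i<0}\F_i^{-\mu_i}$ are coprime and satisfy $A_0\equiv c_0 B_0\bmod x^{d_x+1}$ for a lift $c_0\in\kp[[x]]^{*}$ of $g$; since $A_0-B_0\equiv(c_0-1)B_0\bmod x^{d_x+1}$, it suffices to show $c_0\equiv 1\bmod x^{d_x+1}$. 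When $\lc_y F$ is a unit of $\kp[[x]]$ (equivalently all $n_i=0$) the polynomials $A_0,B_0$ are monic, comparing $y$‑degrees and leading coefficients gives $\deg_y A_0=\deg_y B_0$ and $c_0\equiv 1\bmod x^{d_x+1}$, hence $g=1$.

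The main obstacle is the remaining case of Step 2, where both $A_0$ and $B_0$ genuinely involve the branches of $F$ over the point at infinity (those $\F_i$ with $n_i>0$, whose roots have negative $x$‑valuation by Lemma~\ref{lval}). In that situation the leading coefficients $x^{a},x^{b}$ of $A_0,B_0$ may vanish in $R$, so comparing them only yields $c_0\equiv 1$ modulo a proper power of $x$. I expect to close the gap by also bringing in the constant coefficients of $A_0$ and $B_0$ — which by Lemma~\ref{lval} are units of $\kp[[x]]$ precisely at the infinite branches — together with the companion bound coming from them, and by re‑running the induction on $d_x$ while keeping track of the Newton data of $A_0$ and $B_0$ supplied by Lemma~\ref{lval}; the point is that $\deg_x F\le d_x$ (equivalently $\sum_i n_i\le d_x$) leaves no room for a unit $c_0\not\equiv 1\bmod x^{d_x+1}$. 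Once $c_0\equiv 1\bmod x^{d_x+1}$ is established one gets $A_0\equiv B_0\bmod x^{d_x+1}$, i.e. $g=1$, completing the proof.
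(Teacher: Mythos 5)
Your reduction of $\mu\in Z$ to $\partial_y\bigl(\prod_i f_i^{\mu_i}\bigr)=0$ in the total ring of fractions of $R[y]$ is exactly the paper's starting point, and your treatment of the case $\mu\in\{0,1\}^s$ is complete and in fact more careful than the published one: you check that the leading coefficient $x^{a}$ of $\prod_{\mu_i=1}\F_i$ survives in $R$ because $a\le\sum_i n_i\le d_x$, so that the coefficient $\delta x^{a}$ of $y^{\delta-1}$ of the derivative cannot vanish when $\delta$ is invertible in $\kp$. Your Step~1 for the $\zp^s$ case (the kernel of $\partial_y$ on the total quotient ring of $R[y]$ is $R$, by induction on the nilpotency order of $x$, hence $g\in R^{*}$) is also sound and fills in what the paper dismisses with ``obviously''.

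The gap is the one you flag yourself, and it is genuine. In Step~2 of the $\zp^s$ case you only obtain $c_0\equiv 1\bmod x^{d_x+1}$ when all $n_i=0$; as soon as a branch at infinity occurs in the support of $\mu$, the top-coefficient comparison $x^{a}\equiv c_0x^{b}$ determines $c_0$ only modulo $x^{d_x+1-a}$, and when $|\mu_i|\ge 2$ on such branches one can even have $a,b\ge d_x+1$, in which case it gives nothing at all. Your proposed repair does not yet close this: the constant coefficients give $A_0(x,0)\equiv c_0B_0(x,0)$, and since the infinite branches have unit constant term by Lemma~\ref{lval} this only expresses $c_0$ as the ratio $A_0(x,0)/B_0(x,0)$ modulo some power of $x$ --- which is the statement to be proved evaluated at $y=0$, not a proof of $c_0\equiv 1$. (You do get for free that $\sum_i\mu_i d_i=0$, by looking at the $y^{d_y-1}$-coefficient of $G_\mu$, whose value $(\sum_i\mu_i d_i)\lc_y F$ must vanish modulo $x^{d_x+1}$ while $\val_x\lc_y F\le d_x$; so $\deg_y A_0=\deg_y B_0$ is automatic, but the unit $c_0$ remains undetermined.) For comparison, the paper disposes of this case in one line by identifying the constant $g$ with its ``value at $y=\infty$'', namely $\prod\lc(f_i)^{\mu_i}=x^{\sum_i\mu_i n_i}$, and observing that a power of $x$ that is a unit of $R$ must be $x^{0}=1$; whatever one thinks of that evaluation when the leading coefficients vanish in $R$, it is precisely the step your argument still lacks and would have to justify.
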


\begin{proof}
We have equaliity
$$
\sum \mu_i \hat{f}_i \partial_y f_i=f\frac{\big(\prod f_i^{\mu_i}\big)'}{\prod f_i^{\mu_i}}
$$ 
in the total ring of fractions of $R[y]$. Hence $\mu\in Z$ if and only if $\big(\prod f_i^{\mu_i}\big)'=0$, which is equivalent to that $\prod f_i^{\mu_i}\in R^*(y^p)$, where $p$ stands for the characteristic of $\kp$. If $p>d_y$, and since $\mu_i\in \{0,1\}$, we necessarily have $\prod f_i^{\mu_i}\in R^*$ for a degree reason. If $p=0$ the same holds obviously. In particular, we must have $$\prod f_i^{\mu_i}=\prod f_i^{\mu_i}(\infty):=\prod \lc(f_i)^{\mu_i}=x^k,$$
for some $k\in \zp$. But we know that $\val_x (f_i)=0$ for all $i$, hence we must have $k=0$. 
\end{proof}

\paragraph*{Proof of Proposition \ref{pcomb}.}
we have by assumption that $V(\kp)=S\oplus Z$ where $\pi(Z)=0$ and where the reduced echelon basis $(v_1,\ldots,v_s)$ of $S$ is such that $\pi(v_1),\ldots \pi(v_s)$ are non zero vectors of $\{0,1\}^s$ in reduced echelon form. Hence the same property has to hold for the basis $(w_1,\ldots,w_s)$ of $T$ and up to reordering the $w_j$'s, we must have equality $\pi(w_j)=\pi(v_j)$, forcing relations $w_j-v_j\in Z$. Then the proof of Proposition \ref{pcomb} then follows from Lemma \ref{lprod} combined with relations $(\ref{factoan})$ and $(\ref{prod})$. Since $w_{j}$ has entries in $\{0,1\}$, the complexity for computing $F_j$ belongs to $\tilde{\sO}(d_x \deg_y F_j)$ using fast multiplication in $R[y]$ (see the proof of Proposition \ref{algo1} in Section \ref{S6} for details concerning complexity issues). The last statement then follows by adding this cost over all $j$. $\hfill\square$.

\vskip2mm
\noindent
\begin{example}\label{ex1suite} Let us return to example \ref{ex1} of Subsection \ref{ss2.2}. We have that 
$$
V(\kp)=\big\langle (1,0,0,0,1),(0,1,0,0,1),(0,0,1,1,-1)\big\rangle\quad {\rm and}\quad Z=\langle (1,-1,0,0,0) \rangle.
$$
Hence $\pi(V(\kp))=\big\langle (1,0,0,0,1),(0,1,0,0,0)\big\rangle$ has dimension $\dim V(\kp)-\dim Z=2 (=\dim S)$ and $F$ is reasonnably ramified. Let $T$ be such that $V(\fp)=Z\oplus T$ and such that the reduced echelon basis $(w_1,w_2)$ of $T$ is a partition of $(1,\ldots,1)$. The constraints $w_i\in\{0,1\}^5 \cap V(\kp)$, $w_i\ne 0$ lead to the two possible solutions 
$$
w_1=(1,0,0,0,1)\quad {\rm or} \quad w_1=(1,0,1,1,0).
$$
Using the relation $w_1+w_2=(1,1,1,1,1)$, these solutions correspond respectively to the factorizations
$$
(F_1,F_2)=([\F_1 \F_5]^{d_x+1},[\F_2\F_3\F_4]^{d_x+1})
\quad {\rm or} \quad
(F_1,F_2)=([\F_1 \F_3\F_4]^{d_x+1},[\F_2 \F_5]^{d_x+1})
$$
But we know here that $[\F_1]^{d_x+1}=[\F_2]^{d_x+1}$ since $(1,-1,0,0,0)\in Z$. Hence both solutions determine the irreducible factorization of $F$, as predicted by Proposition \ref{pcomb}.  
\end{example}

\begin{example} Let us return to example \ref{exlin}. We have  
$$
V(\kp)=\langle (1,0,0),(0,1,0),(0,0,1)\rangle \quad {\rm and}\quad Z=\langle (1,-2,0) \rangle.
$$
We have that $\pi(V(\kp))=\langle (0,0,1) \rangle$ has dimension $1<2=\dim V(\kp)-\dim Z$. So $F$ is not reasonnably ramified.  
The family of all possible complementary subspaces $T$ of $Z$ in $V(\kp)$ whose basis form a partition of $(1,1,1)$ is 
$$
T=\langle (1,0,0),(0,1,1)\rangle,\quad T=\langle (0,1,0),(1,0,1)\rangle \quad  {\rm or}\quad T=\langle (0,0,1),(1,1,0)\rangle.
$$
Contrarly to the previous example, only the second solution leads to the good factorization of $F$.
\end{example}

Unfortunately, for more complicated examples, looking for a complementary vector space $T$ of $Z$ in $V(\bar{\kp})$ whose reduced echelon basis form a partition of $(1,\ldots,1)$ might not be an easy task, even though we know such a $T$ exists.

An alternative approach in the zero characteristic case is to use linear algebra over $\zp$. Namely, we can suppose in that case that $\fp=\qp$, so that $V(\fp)\cap \zp^s$ is a free $\zp$-module or rank $\dim V(\fp)$. Recall that the Hermite normal form of a matrix with integer entries is such that the leading entry (first non zero entry) of a nonzero row is positive and strictly to the right of the leading entry of the row above it. Moreover, all entries in a column above a leading entry are nonnegative and strictly smaller than the leading entry. This forces also all entries in a column below a leading entry to be zero. Such a form exists and is unique, and it conserves the row space \citep{sto}. We have:

\begin{proposition}
Suppose that $\kp$ has characteristic zero and that $F$ is reasonnably ramified. Then we can order the set $\{1,\ldots,s\}$ such that $I=\{1,\ldots,\ell\}$ for some $\ell\ge s$. Let $(w_1,\ldots,w_r)$ be the first $r$ vectors of the Hermite normal form of a basis of the free $\zp$-module $V(\fp)\cap \zp^s$.   Then, 
$$
F_j=\prim\Big[\lc(F)\prod_{i=1}^s \F_i^{w_{ji}}\Big]^{d_x+1}\quad j=1,\ldots,r.
$$
If moreover the $w_j$'s have positive entries, then we can compute the $F_j$'s within $\wt{\sO}(d_xd_y)$ field operations. 
\end{proposition}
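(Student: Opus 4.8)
The plan is to show that, once the coordinates of $\fp^s$ are reordered so that $I=\{1,\ldots,\ell\}$, the first $r$ rows $w_1,\ldots,w_r$ of the Hermite normal form of the lattice $\Lambda:=V(\fp)\cap\zp^s$ differ from the partition vectors $v_1,\ldots,v_r$ of $S$ only by vectors of $Z$; the product formula then drops out of Lemma \ref{lprod} and Proposition \ref{pcomb} exactly as in the $\{0,1\}$-case. Throughout, $\fp=\qp$, and $\mathrm{pr}_I\colon\fp^s\to\fp^{\ell}$ denotes the projection onto the first $\ell$ coordinates, which is identified with $\pi$ via $L\cong\fp^{\ell}$.

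First I would extract from the reasonably ramified hypothesis the two facts that $\pi|_S$ is injective and that $Z=\ker(\mathrm{pr}_I|_{V(\fp)})$. The first is a reformulation of $\dim\pi(V(\fp))=\dim S$ once one notes $\pi(V(\fp))=\pi(S)$ (as $\pi(Z)=0$). For the second, the inclusion $Z\subseteq\ker(\mathrm{pr}_I|_{V(\fp)})$ is the definition of $I$, and conversely if $\mu\in V(\fp)$ has $\mathrm{pr}_I(\mu)=0$, then, writing $\mu=\sigma+\zeta$ with $\sigma\in S$ and $\zeta\in Z$ via Proposition \ref{NumFac}, we get $0=\mathrm{pr}_I(\sigma)=\pi(\sigma)$, so $\sigma=0$ and $\mu\in Z$. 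In particular $\ell\ge\dim\pi(S)=r$. Reasonably ramified also tells us that every $v_j$ has some coordinate $i\in I$ with $(v_j)_i=1$, so the vectors $v_1|_I,\ldots,v_r|_I$ are nonzero $\{0,1\}$-vectors with pairwise disjoint supports.

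Next I would analyse the Hermite normal form $H$ of $\Lambda$, with rows $h_1,\ldots,h_n$ ($n=\dim V(\fp)=r+\dim Z$) and increasing pivot columns $c_1<\cdots<c_n$. Any row $h_i$ with $c_i>\ell$ is supported on the last $s-\ell$ coordinates, hence lies in $\ker(\mathrm{pr}_I|_{V(\fp)})=Z$; since the rows of $H$ are linearly independent, at most $\dim Z$ of them have pivot $>\ell$, so at least $r$ have pivot $\le\ell$. On the other hand the images $\mathrm{pr}_I(h_i)$ of the rows with $c_i\le\ell$ remain nonzero and in echelon form, hence linearly independent, so there are at most $\dim\mathrm{pr}_I(V(\fp))=r$ of them. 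Therefore exactly $h_1,\ldots,h_r$ have pivot $\le\ell$, and $h_1|_I,\ldots,h_r|_I$ is a $\zp$-basis of $\mathrm{pr}_I(\Lambda)$ in Hermite normal form. But $v_1|_I,\ldots,v_r|_I$ is \emph{also} a $\zp$-basis of $\mathrm{pr}_I(\Lambda)$ in Hermite normal form: it generates $\mathrm{pr}_I(\Lambda)$ since any $\mu\in\Lambda$ satisfies $\mathrm{pr}_I(\mu)=\sum_jt_j\,v_j|_I$, where $(t_j)$ are the coordinates of the $S$-component of $\mu$, and reading off the integer vector $\mathrm{pr}_I(\mu)$ on the support of each $v_j|_I$ forces $t_j\in\zp$; and it is in Hermite normal form because the $v_j|_I$ are nonzero $\{0,1\}$-vectors with disjoint supports. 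By uniqueness of the Hermite normal form, after reordering the $v_j$ we get $h_j|_I=v_j|_I$, that is, $w_j-v_j=h_j-v_j\in\ker(\mathrm{pr}_I|_{V(\fp)})=Z$ for $j=1,\ldots,r$.

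It remains to repeat the end of the proof of Proposition \ref{pcomb}. Since $w_j-v_j\in Z\cap\zp^s$, Lemma \ref{lprod} (in its characteristic-zero form, valid for exponent vectors in $\zp^s$) gives $\prod_if_i^{(w_j-v_j)_i}=1$ in the total ring of fractions of $R[y]$, whence $\prod_if_i^{w_{ji}}=\prod_if_i^{v_{ji}}$ there and therefore $[\lc(F)\prod_i\F_i^{w_{ji}}]^{d_x+1}=[\lc(F)\prod_i\F_i^{v_{ji}}]^{d_x+1}$. Taking primitive parts and applying Proposition \ref{pcomb} with $T=S$ (legitimate by Proposition \ref{NumFac}, since the reduced echelon basis of $S$ is the partition $v_1,\ldots,v_r$) yields $F_j=\prim[\lc(F)\prod_i\F_i^{w_{ji}}]^{d_x+1}$. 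If moreover the $w_j$ have nonnegative entries, each such product is a genuine element of $\kp[[x]][y]$, and computing it modulo $x^{d_x+1}$ by fast multiplication in $R[y]$ and then extracting the primitive part costs $\wt{\sO}(d_x\deg_y F_j)$; summing over $j$ gives the announced $\wt{\sO}(d_xd_y)$, exactly as for Proposition \ref{pcomb} (compare the proof of Proposition \ref{algo1}). The one genuinely delicate point is the Hermite normal form bookkeeping of the third paragraph — especially the equality $h_j|_I=v_j|_I$ on the nose; the remaining steps are a verbatim rerun of the argument for Proposition \ref{pcomb}.
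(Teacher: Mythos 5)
Your proof is correct and follows essentially the same route as the paper's: you reduce to showing $\pi(w_j)=\pi(v_j)$ via the structure and uniqueness of the Hermite normal form, deduce $w_j-v_j\in Z\cap\zp^s$, and conclude with Lemma~\ref{lprod}, exactly as in the proof of Proposition~\ref{pcomb}. The only difference is that you spell out in full the bookkeeping (pivot counting, identification of $\mathrm{pr}_I(V(\fp)\cap\zp^s)$, and the fact that the restricted partition vectors are themselves an HNF basis) that the paper compresses into the single sentence ``it follows from the definition of the Hermite normal form that $\pi(w_j)=\pi(v_j)$''.
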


\begin{proof}
We have by assumption that $V(\kp)=S\oplus Z$ where $\pi(Z)=0$ and where the reduced echelon basis $(v_1,\ldots,v_s)$ of $S$ is such that $\pi(v_1),\ldots ,\pi(v_s)$ are non zero vectors in row echelon form. After reordering the columns as in the Proposition, it follows from the definition of the Hermite normal form that $\pi(w_j)=\pi(v_j)$, which forces $w_j-v_j\in Z\cap \zp^s$. The conclusion then follows from Lemma \ref{lprod}. If the $w_j$'s have positive entries, the computation of all $F_j$'s reduces to a product of polynomials whose total degree sum is $d_y$. This costs $\wt{\sO}(d_x d_y)$. 
\end{proof}

The advantage is that we reduce our recombination problem to linear algebra (over $\zp$): there are efficient algorithms to compute the Hermite normal form over $\zp$ with the same number of arithmetic operations as for the reduced echelon form over $\qp$ \citep{sto}. The difficulty is that some of the $w_j$'s might have negative entries, in which case the complexity of computing the $F_j$'s is more difficult to estimate.

\begin{example}
Let us continue example \ref{ex1suite}. After reordering, we get that the Hermite normal form of the basis of $V(\kp)\cap \zp^4$ is 
$$
\langle w_1=(1,0,1,0,1),w_2=(0,1,0,0,1),(0,0,0,1,-1) \rangle,
$$
leading to the factorization
$$
(F_1,F_2)=([\F_1 \F_3 \F_5]^{d_x+1},[\F_2\F_5]^{d_x+1}).
$$
Here, the vectors $w_1$ and $w_2$ have positive entries, and the computation of the $F_j$'s is fast. Note that the vectors $w_1$ and $w_2$ do not necessarily form a partition of $(1,1,1,1,1)$ anymore. In particular, the analytic factor $\F_4$ has disappeared from the recombination process due to the relation $[\F_4]^{d_x+1}=[\F_5]^{d_x+1}$. 
\end{example}

%
%

\begin{example} Suppose that $s=5$ and that 
$$
S=\langle (1,0,1,0,0), (0,1,0,1,1)\rangle \quad Z=\langle (0,0,1,2,-3).
$$
Then $F$ is reasonnably ramified. the Hermite normal form of the basis of $V(\kp)\cap \zp^5$ is
$$
\big((1,0,0,-2,3),(0,1,0,1,1),(0,0,1,2,-3)\big).
$$
The vector $w_1=(1,0,0,-2,3)$ has now negative entries and the computation of the corresponding factor 
$$
F_1=\frac{f_1 f_5^3}{f_4^2}
$$ is {\it a priori} more expensive. Note that if we had reordered the indices such that $Z=\langle (0,0,2,1,-3)$, we would have obtained $w_1$ and $w_2$ with positive entries. 
\end{example}

\begin{remark}
In general, we can show that if $F$ is reasonnably ramified and if $Z$ is generated by vectors with at most two non zero entries (meaning that all branches with high $q$-invariant intersect at most one other branch), then we necessarily have $w_j \in \np^s$. A concrete example for which it is not the case is given by $F=(y^6-(y-x)^2)(y-x)\in \qp[x,y]$. In that case $Z=\langle (0,0,0,2,-1,-1) \rangle$. 
\end{remark}


\section{Conditions for constant residues. Equations of $V(\lp)$.}\label{S5}

There are two main approaches that allow to determine when the residues of $G/F$ do not depend on $x$. The first approach is related to the first De Rham cohomology group of the complementary set of the affine curve $F=0$ in $\ap^2_{\kp}$. It allows to test whether certain differential meromorphic forms are linear combinations of the logarithmic derivatives of the absolute factors of $F$ by checking closedness. This is the approach followed by Gao \citep{Gao}. 
The second approach, that we will follow here, is based on a divisibility criterion by $F$ and has been developped by Lecerf \citep{L}. Hence, this section is essentially a relecture of Section 1 in \citep{L}, except that we have now to take into account that the leading coefficient of $F$ is not necessarily invertible in $\kp[[x]]$. 

From now on, we fix $\mu\in \fp^s$ and we denote by $G:=G_{\mu}$ the corresponding polynomial and by $\rho_1,\ldots,\rho_{d_y}$ the residues of $G/F$ at the roots of $F$. We denote by $p\ge 0$ the characteristic of $\kp$ and we adopt the convention $\kp(x^p)=\kp$ for $p=0$. 

\subsection{Equations for $V(\bar{\kp})$}\label{ss5.1}
We want to know when the residues $\rho_k\in \overline{\kp(x)}$ lie in the subfield  $\bar{\kp}$. We recall that the usual $\bar{\kp}$-derivation of $\bar{\kp}(x)$ uniquely extends to a $\bar{\kp}$-derivation of $\overline{\kp(x)}$. An obvious necessary condition for that $\rho_k\in \bar{\kp}$ is that the derivatives of the $\rho_k$'s vanish. More precisely, we have the following lemma:

\begin{lemma}\label{constantresidue}
We have $\rho_k'=0$ if and only if $\rho_k\in \overline{\kp(x^p)}$. If moreover $\kp$ has characteristic zero or greater than $2d_x(d_y-1)$, then $\rho_k'=0$ if and only if $\rho_k\in \bar{\kp}$.
\end{lemma}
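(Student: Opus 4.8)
The plan is to work inside the splitting field $L$ of $F$ over $\kp(x)$. Hypothesis $(H)$ makes $F$ a separable polynomial in $y$, so $L/\kp(x)$ is a finite separable (hence Galois) extension, all roots $y_k$ of $F$ and all residues $\rho_k=G(x,y_k)/\partial_y F(x,y_k)$ lie in $L$, and the standard derivation $d/dx$ of $\kp(x)$ extends uniquely to a $\kp$-derivation of $L$; this is precisely where separability is used (the derivation does \emph{not} extend to $\overline{\kp(x)}$, since differentiating $(x^{1/p})^p=x$ would force $0=1$). So $\rho_k'$ is well defined in $L$, and the first equivalence says that $\ker(d/dx|_L)$ is exactly the set of elements of $L$ that are separably algebraic over $\kp(x^p)$.

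For that first equivalence I would use the minimal polynomial $P(T)=T^e+c_{e-1}T^{e-1}+\dots+c_0\in\kp(x)[T]$ of $\rho_k$ over $\kp(x)$, noting $e\le d_y$ since $\rho_k\in\kp(x)(y_k)$. Differentiating $P(\rho_k)=0$ and using $\rho_k'=0$ kills the chain-rule terms, leaving $\sum_i c_i'\rho_k^i=0$; as the coefficient of $T^e$ here is $c_e'=0$, this is a relation of degree $<e$ for $\rho_k$, hence the zero relation by minimality of $P$, so $c_i'=0$ for all $i$. By the usual lowest-terms argument, an element of $\kp(x)$ with vanishing derivative lies in $\kp(x^p)$ (with the convention $\kp(x^0)=\kp$ in characteristic zero), so $P\in\kp(x^p)[T]$; and $P$, being the minimal polynomial of a separable element of $L$, is a separable polynomial, so $\rho_k$ is separably algebraic over $\kp(x^p)$, i.e.\ $\rho_k\in\overline{\kp(x^p)}$ — and $\rho_k\in\bar\kp$ if $p=0$. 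Conversely, if $\rho_k$ is separably algebraic over $\kp(x^p)$, then $d/dx$, which vanishes on $\kp(x^p)$, extends uniquely to $\kp(x^p)(\rho_k)$, and the unique extension of the zero derivation is zero, so $\rho_k'=0$.

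For the second statement the characteristic-zero case is already contained above, so assume $p>2d_x(d_y-1)$; one implication is trivial, and for the other I would promote $c_i\in\kp(x^p)$ to $c_i\in\kp$ by a degree count, which then gives $P\in\kp[T]$ and $\rho_k\in\bar\kp$. If some $c_i\in\kp(x^p)$ were non-constant, then in lowest terms its numerator or denominator is a non-constant polynomial in $x^p$, so $\deg_x c_i$ is a positive multiple of $p$, in particular $\ge p$; it therefore suffices to prove $\deg_x c_i\le 2d_x(d_y-1)$, which is $<p$ by hypothesis. To bound the $c_i$ I would invoke the identity
$$\prod_{k=1}^{d_y}(T-\rho_k)=\Res_y(F,\ \partial_y F\cdot T-G)\,/\,\Res_y(F,\ \partial_y F),$$
a form of the Poisson formula in which the powers of $\lc_y(F)$ cancel, together with the fact that $P$ is — up to a factor in $\kp[x]$ — one of the irreducible factors in $\kp[x][T]$ of the numerator. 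The two useful structural points are that $\partial_y F$ and $G$ both have $y$-degree at most $d_y-1$ (so no pole of $\rho_k$ sits above $y=\infty$), and that $\Disc_y(F)$, being homogeneous of degree $2(d_y-1)$ in the coefficients of $F$ (each of $x$-degree $\le d_x$), satisfies $\deg_x\Disc_y(F)\le 2d_x(d_y-1)$; combining these with the standard resultant degree estimates yields $\deg_x c_i\le 2d_x(d_y-1)$.

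The part I expect to require genuine care is exactly this last degree bound: pinning down the sharp constant $2d_x(d_y-1)$ — rather than the crude $(2d_y-1)d_x$ — forces one to keep track of the non-invertible leading coefficient $\lc_y(F)$ of $F$ and of the cancellation it induces between the numerator and denominator resultants; equivalently, one must check that the polar divisor of $\rho_k$ on the normalization of the curve $F=0$ avoids $y=\infty$ and is controlled by the zero divisor of $\partial_y F$, whose degree is governed by $\Disc_y(F)$. The remaining ingredients — the minimal-polynomial computation, the description of the constants of $\kp(x)$, and the uniqueness of derivation extensions along separable extensions — are routine.
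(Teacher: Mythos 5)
Your first equivalence, and the characteristic-zero half of the second statement, are correct and complete. In particular you are right to insist that in positive characteristic the derivation $d/dx$ extends only to separable extensions of $\kp(x)$, so that one must work in the splitting field of $F$ rather than in $\overline{\kp(x)}$ (the paper, which for this lemma simply refers to the proof of Lemma 2.4 of Gao, is itself slightly imprecise on this point). The minimal-polynomial computation giving $c_i'=0$, the identification of the constants of $\kp(x)$ with $\kp(x^p)$, and the converse direction via separability are all fine.

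The gap is exactly where you said you expected one, and it is not merely a matter of care: the unconditional bound $\deg_x c_i\le 2d_x(d_y-1)$ that your degree count requires is false. Take $d_y=2$, $F=uy^2+by+c$ and $G=g_1y+g_0$ with $u,b,c,g_0,g_1\in\kp[x]$ generic of degree $d_x$. Then
$$
\rho_1\rho_2=\frac{g_1^2c-g_0g_1b+g_0^2u}{u\,(4uc-b^2)},
$$
whose numerator and denominator are coprime of degree $3d_x=(2d_y-1)d_x$, strictly larger than $2d_x(d_y-1)=2d_x$; and generically $T^2-(\rho_1+\rho_2)T+\rho_1\rho_2$ is the minimal polynomial of $\rho_1$, so the coefficient $c_0$ itself has height $(2d_y-1)d_x$. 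What your resultant identity does yield — via the Sylvester matrix of $F$ and $\partial_y F\cdot T-G$ (size $2d_y-1$, entries of $x$-degree at most $d_x$) together with Gauss's lemma applied to the primitive irreducible factor of $\Res_y(F,\partial_y F\cdot T-G)$ corresponding to $P$ — is $\deg_x c_i\le (2d_y-1)d_x$. That proves the lemma under the stronger hypothesis $p>(2d_y-1)d_x$, but leaves the range $2d_x(d_y-1)<p\le (2d_y-1)d_x$ uncovered. To reach the constant as stated you would have to exploit the hypothesis $\rho_k'=0$ (equivalently $c_i\in\kp(x^p)$) \emph{inside} the degree estimate rather than after it, or argue differently; the unconditional polar-divisor/discriminant count you sketch cannot shave off the extra $d_x$, because the factor $\lc_y(F)$ genuinely survives in the denominator, as the example shows. (For what it is worth, the theorems of the paper only ever invoke the threshold $p>d_x(2d_y-1)$, which the version of the lemma your argument does establish is enough to cover.)
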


\begin{proof}
See for instance the proof of Lemma 2.4 in \citep{Gao}.
\end{proof}

Let us denote by $\kp[x,y]_{m,n}$ the vector space of bivariate polynomials of degree $\le m$ in $x$ and $\le n$ in $y$. We introduce the $\kp$-linear operator
\begin{eqnarray*}
\D: \kp[x,y]_{d_x,d_y-1} &\quad \longrightarrow &\quad \kp[x,y]_{3d_x-1,3d_y-3} \\
G & \quad \longmapsto &  \quad \Big(G_x F_y-G_y F_x\Big)F_y-\Big(F_{xy}F_y-F_{yy} F_x\Big)G,
\end{eqnarray*}
with the standard notations $F_y$, $F_{xy}$, etc. for the partial derivatives. Let $y_k(x)$ be the root of $F$ corresponding to the residue $\rho_k$. By combining the formulas 
$$
\rho_k(x)=\frac{G(x,y_k)}{F_y(x,y_k)} \quad {\rm and}\quad y_k'(x)=-\frac{F_x(x,y_k)}{F_y(x,y_k)},
$$ 
we are led to the equality
$$
\rho_k'(x)=\frac{\D(G)(x,y_k)}{F_y^3(x,y_k)},
$$
so that
$$
\rho_k'=0 \iff \D(G)(x,y_k)=0.
$$
In particular, since $F$ is separable with respect to $y$, it follows that $\rho_k'=0$ for all $k=1,\ldots,d_y$ if and only if $F$ divides $\D(G)$ in $\kp(x)[y]$. In order to reduce this division problem to a well estimated finite number of linear equations, we localize. 

Let $a\in \kp[x]$ be an irreducible polynomial which is coprime to the leading coefficient $\lc_y(F)\in \kp[x]$ of $F$. We denote by
$$
\ap:=\kp[x]_{(a)}
$$ 
the localization of $\kp[x]$ at $a$. Hence, euclidean division by $F$ in $\ap[y]$ is well defined. Each $Q\in \ap[y]$ has a unique  $a$-adic expansion
$$
Q(y)=\sum_{i=0}^{+\infty} q_i a^i,\qquad q_i\in \kp[x,y],\,\, \deg_x q_i < \deg\,a.
$$
For each pair of positive integers $0\le m\le n$, we introduce the truncated polynomial
$$
\big\{Q\big\}^{n}_m:=\sum_{i=m}^{n-1} q_i a^i.
$$
We have the following lemma, generalizing Lemma 3 in \citep{L}:

\begin{lemma}\label{divides}
Let $F$ and $\D(G)$ as before and denote by $\D(G)=QF+R$ the euclidean division of $\D(G)$ by $F$ in the ring $\ap[y]$. Let 
$$
m:=\Big\lfloor \frac{2d_x-1}{\deg\,a} \Big\rfloor +1\qquad{\rm and} \qquad n:=\Big\lceil \frac{3d_x-1}{\deg\,a}\Big\rceil+1.
$$
Then $F$ divides $\D(G)$ in $\kp(x)[y]$ if and only if $\big\{Q\big\}^{n}_m=\big\{R\big\}^{n}=0$.
\end{lemma}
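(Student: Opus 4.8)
The plan is to exploit that $a$ is coprime to $\lc_y(F)$, so $\lc_y(F)$ is a unit in the discrete valuation ring $\ap$, Euclidean division by $F$ in $\ap[y]$ makes sense and — by uniqueness of Euclidean division — coincides with the one in $\kp(x)[y]$ (where $\lc_y(F)$ is also invertible). In particular $F$ divides $\D(G)$ in $\kp(x)[y]$ if and only if the remainder $R$ above vanishes, so the genuine content of the lemma is that the vanishing of the two explicit truncations $\{Q\}^n_m$ and $\{R\}^n$ already detects $R=0$; for this the only real input is a sharp bound on the $x$-degree of $\D(G)$.

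First I would establish the refined estimate $\deg_x\D(G)\le d_x+2\deg_x F-1$ by reading the definition of $\D$ term by term, using $\deg_x G\le d_x$, $\deg_x F_x\le\deg_x F-1$, $\deg_x F_{xy}\le\deg_x F-1$, $\deg_x F_y\le\deg_x F$ and $\deg_x F_{yy}\le\deg_x F$. This is precisely where the subtlety flagged in the text — that $\lc_y(F)$ need not be invertible in $\kp[[x]]$ — enters: the crude bound $\deg_x\D(G)\le 3d_x-1$ recorded in the definition of $\D$ is not enough when $\deg_x F<d_x$. From the refined bound one gets that, whenever $F$ divides $\D(G)$, the quotient $Q=\D(G)/F$ — which lies in $\kp[x,y]$ by Gauss's lemma, since $F$ is primitive with respect to $y$ — has $\deg_x Q=\deg_x\D(G)-\deg_x F\le\deg_x F+d_x-1\le 2d_x-1$.

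The two directions are then short. For \emph{only if}: if $F$ divides $\D(G)$ then $R=0$, hence $\{R\}^n=0$; and $Q\in\kp[x,y]$ with $\deg_x Q\le 2d_x-1<m\deg a$, so its $a$-adic expansion is supported in indices $<m$, i.e.\ $Q=\{Q\}^m$ and a fortiori $\{Q\}^n_m=0$. For \emph{if}: assuming $\{R\}^n=0$ and $\{Q\}^n_m=0$, both $R$ and $Q-\{Q\}^m$ have all $a$-adic coefficients of index $<n$ equal to zero, hence are divisible by $a^n$ in $\ap[y]$ because $\ap$ is a DVR; therefore $\D(G)-\{Q\}^m F=(Q-\{Q\}^m)F+R$ is divisible by $a^n$ in $\ap[y]$. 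But $\D(G)-\{Q\}^m F\in\kp[x,y]$, and its $x$-degree is at most $\max(3d_x-1,\, m\deg a-1+d_x)\le 3d_x+\deg a-2<n\deg a$ by the choice of $n$; since $a$ is irreducible in $\kp[x]$, a nonzero element of $\kp[x,y]$ of $x$-degree $<n\deg a$ cannot be divisible by $a^n$, whence $\D(G)=\{Q\}^m F$ and $F$ divides $\D(G)$ in $\kp[x,y]\subset\kp(x)[y]$.

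The hard part is purely the bookkeeping: pinning down the refined degree bound $\deg_x\D(G)\le d_x+2\deg_x F-1$ (the crude one stated for $\D$ does not suffice in the non-monic case), and then verifying the two elementary inequalities $2d_x-1<m\deg a$ and $3d_x+\deg a-2<n\deg a$ that are exactly what make the stated $m$ and $n$ work.
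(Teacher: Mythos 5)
Your proof is correct and follows essentially the same route as the paper's: Gauss's lemma gives $Q\in\kp[x,y]$ with $\deg_x Q\le 2d_x-1$ for one direction, and the converse combines the $a$-adic valuation of $\D(G)-\{Q\}^m F$ with the degree bound $\deg_x(\D(G)-\{Q\}^m F)\le 3d_x+\deg a-2<n\deg a$. The only quibble is that your ``refined'' estimate is vacuous here, since $\deg_x F=d_x$ by definition of the bidegree — the subtlety the paper flags is that $\lc_y(F)(0)$ may vanish, which affects the choice of $a$, not the degree bookkeeping.
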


\begin{proof}
Since the leading coefficient of $F$ is invertible in $\ap$, then $F$ divises $\D(G)$ in $\kp(x)[y]$ if and only if it divises $\D(G)$ in $\ap[y]$. Suppose that $F$ divises $\D(G)$ in $\kp(x)[y]$. Since both $F$ and $\D(G)$ lie in $\kp[x,y]$ and $F$ is primitive with respect to $y$, it follows from Gauss lemma that $F$ divises $\D(G)$ in $\kp[x,y]$. It follows that $R=0$ and that $Q\in\kp[x,y]$ has degree $\deg_x \D(G)-\deg_x F \le 2d_x-1$. In particular, the coefficients of $a^i$ in the $a$-adic expansion of $Q$ are zeroes as soon as $i\deg a > 2d_x-1$, that is for all $i\ge m$. In particular $\big\{Q\big\}^{n}_m=0$. Conversely, suppose that $\big\{Q\big\}^{n}_m=\big\{R\big\}^{n}=0$. Then the $a$-valuation of $\D(G)-\big\{Q\big\}^{m}F$ is greater or equal to $n$. It follows that
either $\D(G)=\big\{Q\big\}^{m}F$ or
$$
\deg _x (\D(G)-\big\{Q\big\}^{m}F) \ge n\deg a\ge 3d_x+a -1.
$$
But we have that
$$
\deg_x \big\{Q\big\}^{m}\le (m-1)\deg a+ \deg a-1\le 2d_x+\deg a-2
$$
so that $\deg_x (\D(G)-\big\{Q\big\}^{m}F)\le 3d_x+\deg a-2$. This forces equality $\D(G)=\big\{Q\big\}^{m}F$ in $\ap[y]$. Since all members of the equality lie in $\kp(x)[y]$, the equality holds in the ring $\kp(x)[y]$. 
\end{proof}

\begin{remark}\label{rem1}
If the leading coefficient of $F$ does not vanish at $0$, then we can take $a(x)=x$ and $\ap=\kp[[x]]$ in the previous lemma. More generally, if $\kp$ has cardinality greater than  $d_y$, we can reduce to that case up to replace $F(x,y)$ by $y^{d_y}F(x,\alpha+1/y)$ for some $\alpha\in \kp$ such that $F(0,\alpha)\ne 0$. This Moebius transformation has a negligeable cost for our purpose.  It only exchanges the points $(0,\infty)$ and $(0,\alpha)$ and does not modify the geometry of $F$ along the fiber $x=0$. 

If $\kp$ has cardinality $\le d_y$, then both degenerate situations  $u(0)=0$  and $F(0,\alpha)= 0$ for all $\alpha\in \kp$ might hold simultaneously. In such a case, we need to find a prime polynomial $a\in \kp[x]$ coprime to $u$. Note that we can find such an $a$
with $\deg a\in\mathcal{O}(\log(\deg u))$ (see the proof of Proposition \ref{algo1} in Section 4). 
\end{remark}

\begin{remark}
The situation $a(x)\ne x$ requires to perform euclidean division in $\ap[y]$ with $\ap\ne \kp[[x]]$. This may break the sparse structure of $\D(G)$ inherent to the sparse structure of the analytic factors $\F_i\in \kp[[x]][y]$. 
\end{remark} 

According to Lemma \ref{divides}, we introduce the following $\fp$-linear map:
\begin{eqnarray*}
\qquad \qquad\qquad\qquad\D_a:\fp^s & \longrightarrow & \kp[x,y]\times  \kp[x,y] \\
\mu & \longmapsto & \bigg(\frac{\{Q\}^{n}_m}{a^{m}},\big\{R\big\}^{n}\bigg)
\end{eqnarray*}
where $a,m,n$ are defined as in Lemma \ref{divides} and where $Q$ and $s$ are defined by the euclidean division $\D(G_{\mu})=QF+R$ in $\ap[y]$. 

\begin{corollary}\label{cor2}
We have equality $\ker(\D_a)=V(\overline{\kp(x^p)})$. If moreover $\kp$ has characteristic zero or greater than $2d_x(d_y-1)$, then $\ker(\D_a)=V(\bar{\kp})$.
\end{corollary}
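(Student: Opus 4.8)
The plan is to chain together the equalities established just above. First I would unwind the definition of $\D_a$: by Lemma \ref{divides}, a vector $\mu$ lies in $\ker(\D_a)$ precisely when $\{Q\}^n_m = \{R\}^n = 0$, which by that same lemma happens precisely when $F$ divides $\D(G_\mu)$ in $\kp(x)[y]$. So it suffices to identify the set of $\mu$ for which $F \mid \D(G_\mu)$ in $\kp(x)[y]$.

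Next I would invoke the residue computation carried out in the text preceding Lemma \ref{divides}: from the identity $\rho_k'(x) = \D(G)(x,y_k)/F_y^3(x,y_k)$ and the separability of $F$ with respect to $y$, one gets that $\D(G)(x,y_k) = 0$ for every root $y_k$ of $F$ if and only if $F \mid \D(G)$ in $\kp(x)[y]$. Combined with the equivalence $\rho_k' = 0 \iff \D(G)(x,y_k) = 0$, this says $F \mid \D(G_\mu)$ in $\kp(x)[y]$ if and only if $\rho_k' = 0$ for all $k = 1,\ldots,d_y$. Therefore $\ker(\D_a) = \{\mu \in \fp^s \mid \rho_k'(\mu) = 0 \text{ for all } k\}$.

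Finally I would apply Lemma \ref{constantresidue} rootwise. That lemma gives $\rho_k' = 0 \iff \rho_k \in \overline{\kp(x^p)}$ unconditionally, and $\rho_k' = 0 \iff \rho_k \in \bar\kp$ when $\chara \kp = 0$ or $\chara \kp > 2d_x(d_y-1)$. Quantifying over $k$ and recalling the definition $V(\lp) = \{\mu \in \fp^s \mid \rho_k \in \lp \text{ for all } k\}$, the first statement becomes $\ker(\D_a) = V(\overline{\kp(x^p)})$ and, under the characteristic hypothesis, $\ker(\D_a) = V(\bar\kp)$, which is exactly the claim.

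I do not expect a genuine obstacle here: the corollary is a bookkeeping assembly of Lemma \ref{divides}, the displayed residue-derivative formula, and Lemma \ref{constantresidue}. The only point demanding a little care is that $\D_a$ is defined using a prime $a$ coprime to $\lc_y(F)$ and euclidean division in $\ap[y]$ rather than in $\kp[[x]][y]$, so I would make explicit that Lemma \ref{divides}'s equivalence is stated in terms of divisibility in $\kp(x)[y]$, which is independent of the choice of $a$; hence $\ker(\D_a)$ does not depend on $a$ and coincides with the global divisibility condition $F \mid \D(G_\mu)$ in $\kp(x)[y]$. With that observation the three equivalences compose cleanly.
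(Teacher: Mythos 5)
Your proof is correct and follows exactly the paper's route: the paper's own proof is the one-line "Follows by combining Lemma \ref{constantresidue} and Lemma \ref{divides}," and your write-up simply makes explicit the intermediate equivalences (kernel condition $\Leftrightarrow$ divisibility of $\D(G_\mu)$ by $F$ $\Leftrightarrow$ vanishing of all $\rho_k'$ $\Leftrightarrow$ membership of the residues in the relevant subfield) that the paper leaves implicit.
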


\begin{proof}
Follows by combining Lemma \ref{constantresidue} and Lemma \ref{divides}.
\end{proof}

The image of $\D_a$ is contained in the finite-dimensional vector space
$$
\D_a(\fp^s)\subset \kp[x,y]_{d_x-1,2d_y-3}\times  \kp[x,y]_{3d_x-1,d_y-1}.
$$
Hence, for $\fp=\kp$, the computation of $\ker(\D_{a})$ reduces to compute the kernel of a $\kp$-linear system of $s$ unknowns and $\mathcal{O}(d_xd_y)$ equations.

\subsection{The case of small characteristic}\label{smallchar}

When the characteristic $p$ of $\kp$ is small, we need supplementary condition in order to know when $\rho_k\in \bar{\kp}$. In fact, we will get in such a case the stronger conditions $\rho_k\in \fp_p$ thanks to the following $\fp_p$-linear operator:
\begin{eqnarray*}
\qquad\qquad\qquad\qquad\N:\kp[x,y]_{d_x,d_y-1}&\longrightarrow &\kp[x,y^p]_{pd_x,d_y-1}\\
G &\longmapsto & G^p-\partial_y^{p-1}(GF^{p-1}).
\end{eqnarray*}
The operator $\N$ is well defined since $\partial_y(\N(G))=0$. The vector space $\kp[x^a,y^b]_{m,n}$ has to be understood as the vector space of polynomials of bidegree $(m,n)$ in the variables $(x^a,y^b)$. The operator $\N$ was introduced by Niederreiter in the context of univariate factorization over finite fields \citep{Nied}, and then used for bivariate factorization in \citep{L}. 

\begin{lemma}\label{VFp}
We have $V(\fp_p)=\{\mu\in \fp_p^s\,|\, \N(G_{\mu})=0\}$
\end{lemma}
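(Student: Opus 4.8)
The plan is to show the two inclusions separately, using the characterization of membership in $V(\fp_p)$ provided by the residues and the fact that the Niederreiter operator $\N$ detects exactly when a rational function in $\overline{\kp(x)}$ lies in the prime field $\fp_p$. First I would recall, following \citep{Nied} and \citep{L}, the basic algebraic identity underlying $\N$: for a monic (or, here, suitably normalized) separable polynomial $F$ with roots $y_k$, one has
$$
\N(G)(x,y_k) = \rho_k^p - \rho_k
$$
where $\rho_k = G(x,y_k)/F_y(x,y_k)$ is the $k$-th residue of $G/F$. This is the standard computation: the term $\partial_y^{p-1}(GF^{p-1})$, when evaluated at a root $y_k$ of $F$, produces the $(p-1)$-st Hasse--Taylor coefficient which picks out $\rho_k$ after the appropriate Leibniz expansion, while $G^p$ evaluated at $y_k$ multiplied by the right power of $F_y$ gives $(G(x,y_k))^p$; dividing through by $F_y(x,y_k)^p$ (which is legitimate by separability of $F$ with respect to $y$) yields $\rho_k^p-\rho_k$. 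The slight subtlety compared to \citep{L} is that $F$ need not be monic, so I would carry the leading coefficient $\lc_y(F)$ through the computation and check it cancels; this is the only place where our weaker hypotheses on $\kp$ (no cardinality assumption) enter, and it is routine.

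Granting the identity $\N(G_\mu)(x,y_k) = \rho_k(\mu)^p - \rho_k(\mu)$ for $k=1,\ldots,d_y$, the lemma follows quickly. Since $F$ is separable with respect to $y$, a polynomial $H \in \kp[x,y]$ with $\deg_y H < d_y$ vanishes identically if and only if $H(x,y_k)=0$ for all $k$; applying this to $H=\N(G_\mu)$ (whose $y$-degree is $< d_y$ by construction of $\N$), we get $\N(G_\mu)=0$ if and only if $\rho_k^p = \rho_k$ for all $k$. Now $\rho_k \in \overline{\kp(x)}$, and an element of a field of characteristic $p$ satisfies $z^p = z$ precisely when it lies in the prime field $\fp_p$. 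Hence $\N(G_\mu)=0 \iff \rho_k \in \fp_p$ for all $k$, which is exactly the defining condition for $\mu \in V(\fp_p)$ as introduced in Subsection \ref{ss2.2} (with $\lp = \fp_p$).

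The main obstacle is the leading-coefficient bookkeeping in the evaluation identity: in \citep{L} the polynomial $F$ is monic, so the clean formula $\N(G)(x,y_k)=\rho_k^p-\rho_k$ is stated directly, whereas here I must verify that the non-invertibility of $\lc_y(F)$ in $\kp[[x]]$ does not spoil it. Concretely, one evaluates the Leibniz expansion of $\partial_y^{p-1}(G F^{p-1})$ at $y_k$: all terms in which fewer than $p-1$ derivatives fall on $F^{p-1}$ vanish because they retain a factor $F(x,y_k)=0$, leaving a single surviving term proportional to $G(x,y_k)\,(F_y(x,y_k))^{p-1}$ up to the combinatorial constant $(p-1)!\equiv -1 \pmod p$; then $G^p(x,y_k) - \big(-G(x,y_k)(F_y(x,y_k))^{p-1}\cdot(-1)\big)$ — after dividing by $(F_y(x,y_k))^p$, which is a nonzero element of $\overline{\kp(x)}$ by separability — gives $\rho_k^p - \rho_k$, with no leftover power of $\lc_y(F)$. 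Once this identity is in hand the rest is immediate, so I would present the evaluation computation carefully and then deduce the stated equality of vector spaces in a single line. $\hfill\square$
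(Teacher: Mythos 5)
The paper disposes of this lemma with a one-line citation to Theorem 2 of \citep{Nied}, so your decision to reprove it via the evaluation identity is a legitimate, self-contained alternative --- it is essentially the standard proof of Niederreiter's theorem transported to this setting. However, your key identity is obtained through a sign error. At a root $y_k$ of $F$, all terms of the Leibniz expansion in which fewer than $p-1$ derivatives fall on $F^{p-1}$ vanish (since $F^{p-1}$ has a zero of exact order $p-1$ there, by separability), leaving
$$
\partial_y^{p-1}\big(GF^{p-1}\big)(x,y_k)=(p-1)!\,G(x,y_k)\,F_y(x,y_k)^{p-1}=-\,G(x,y_k)\,F_y(x,y_k)^{p-1}.
$$
Hence, with $\N$ exactly as defined in Subsection \ref{smallchar}, namely $\N(G)=G^p-\partial_y^{p-1}(GF^{p-1})$, one gets
$$
\frac{\N(G)(x,y_k)}{F_y(x,y_k)^{p}}=\rho_k^{\,p}+\rho_k,
$$
not $\rho_k^p-\rho_k$. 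The extra factor $(-1)$ you insert in the expression ``$G^p-\big(-G F_y^{p-1}\cdot(-1)\big)$'' is unjustified; it silently flips the sign to land on the desired answer. The condition $\rho_k^p+\rho_k=0$ characterizes the roots of $z^p+z$, which for odd $p$ is not the prime field, so the lemma as you derive it would be false as stated. The operator whose kernel is $V(\fp_p)$ is $G\mapsto G^p+\partial_y^{p-1}(GF^{p-1})$, i.e.\ $F^p\big(h^{(p-1)}+h^p\big)$ with $h=G/F$; the minus sign in the paper's definition of $\N$ is a discrepancy that a from-scratch proof ought to expose rather than absorb.

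A second, smaller gap: you claim $\deg_y\N(G_\mu)<d_y$ ``by construction'' and use it to conclude that vanishing at the $d_y$ roots of $F$ forces $\N(G_\mu)=0$. In fact $\N(G_\mu)$ lies in $\kp[x,y^p]_{pd_x,d_y-1}$: it has degree $<d_y$ in the variable $y^p$, but $y$-degree up to $p(d_y-1)\ge d_y$. The conclusion survives with one extra word: writing $\N(G_\mu)=H(x,y^p)$ with $\deg_{y^p}H<d_y$, the values $y_1^p,\dots,y_{d_y}^p$ are pairwise distinct (Frobenius is injective on $\overline{\kp(x)}$), so $H$ vanishing at all of them forces $H=0$. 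Finally, the leading-coefficient bookkeeping to which you devote most of your attention is a non-issue for this particular lemma: the residues live in $\overline{\kp(x)}$, where $\lc_y(F)\ne 0$ is invertible; the non-invertibility of $\lc_y(F)$ in $\kp[[x]]$ only becomes delicate in the localization argument of Lemma \ref{divides}.
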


\begin{proof}
This follows from Theorem $2$ in \citep{Nied}. 
\end{proof}

Since $\deg_x \N(G_{\mu})=pd_x$, the number of linear equations to be solved for computing $\ker(\N(G_{\mu}))$ grows linearly with $p$. The idea developed in \citep{L} is to combine $\N$ with the operator $\D$ in order to cut down this dependancy in $p$.

\begin{lemma}
Suppose that $\mu\in \ker(\D_a)$. Then $\N(G_{\mu})\in \kp[x^p,y^p]$. 
\end{lemma}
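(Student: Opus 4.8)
The statement to prove is: if $\mu \in \ker(\D_a)$, then $\N(G_\mu) \in \kp[x^p, y^p]$. I already know $\partial_y \N(G_\mu) = 0$ always (the operator $\N$ is well-defined for that reason), so $\N(G_\mu) \in \kp[x,y^p]$ automatically; the content of the lemma is the extra assertion that $\N(G_\mu)$ is also a polynomial in $x^p$, i.e.\ $\partial_x \N(G_\mu) = 0$. So the whole game is to show $\partial_x \N(G_\mu) = 0$ under the hypothesis that the residues $\rho_k$ of $G_\mu/F$ all lie in $\overline{\kp(x^p)}$, which is exactly what $\mu \in \ker(\D_a)$ gives by Corollary \ref{cor2}.

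First I would reduce $\partial_x \N(G_\mu) = 0$ to a statement about the roots $y_k = y_k(x)$ of $F$. Since $F$ is separable with respect to $y$, a polynomial $P \in \kp[x,y]$ of $y$-degree $< d_y$ is divisible by $F$ in $\kp(x)[y]$ iff $P(x, y_k) = 0$ for all $k$; and $\N(G_\mu)$ has $y$-degree $\le d_y - 1 < d_y$. So it suffices to show that $\partial_x\bigl(\N(G_\mu)\bigr)(x, y_k)$, suitably interpreted, vanishes for each root $y_k$ — or more cleanly, to express $\N(G_\mu)$ evaluated along the curve in terms of the residue $\rho_k$ and show that the resulting expression is a $p$-th power (or has vanishing derivative) precisely when $\rho_k \in \overline{\kp(x^p)}$. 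The right computation here is the one Niederreiter's theorem is built on: along the branch $y = y_k(x)$ one has (from $G_\mu = \rho_k F_y$ at $y_k$, in the localized/valuation sense) that $\N(G_\mu)$ restricted to the $k$-th branch is governed by $\rho_k^p - (\text{something})$, and the Niederreiter identity $G^p - \partial_y^{p-1}(GF^{p-1})$ is designed so that its value at $y_k$ equals $\rho_k^p$ times a $p$-th power coming from the other factors, up to a term that is a derivative. The key algebraic identity to extract is that $\N(G_\mu)$, as an element of $\kp[x, y^p]$, when reduced modulo $F$ over $\kp(x)$, is a $\kp(x^p)$-linear combination of the $\hat E_j \partial_y E_j$ with coefficients $\rho_k^p \in \kp(x^p)$ — but since $\N(G_\mu)$ already has degree $< d_y$ in $y$ it equals that combination, hence lies in $\overline{\kp(x^p)}[x,y]$, and combined with membership in $\kp[x,y^p]$ and rationality over $\kp$ this forces $\N(G_\mu) \in \kp[x^p, y^p]$.

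More concretely, the chain I would run is: (1) by Corollary \ref{cor2}, $\mu \in \ker(\D_a)$ gives $\rho_k \in \overline{\kp(x^p)}$ for all $k$; equivalently $\rho_k' = 0$. (2) Differentiate the Niederreiter identity $\N(G) = G^p - \partial_y^{p-1}(G F^{p-1})$ with respect to $x$, or rather evaluate it along $y = y_k(x)$ using $y_k' = -F_x/F_y$ and $G(x,y_k) = \rho_k F_y(x,y_k)$, to obtain a clean formula for $\N(G)(x, y_k)$ in terms of $\rho_k$. The point of Niederreiter's construction (Theorem 2 in \citep{Nied}, already invoked for Lemma \ref{VFp}) is precisely that $\N(G)(x,y_k)$ is a $p$-th power expression whose "non-$p$-th-power part" is detected by $\rho_k$; when $\rho_k \in \overline{\kp(x^p)}$, i.e.\ $\rho_k$ is itself (locally) a $p$-th power function, the whole value $\N(G)(x,y_k)$ lies in $\overline{\kp(x^p)}$, and more precisely $\partial_x\bigl(\N(G)(x,y_k)\bigr) = 0$ because the only $x$-dependence came through $\rho_k$ and through $y_k$, the latter entering only via $p$-th powers in the Niederreiter combination. (3) Since this holds for every root $y_k$ and $\deg_y \N(G) < d_y$, separability of $F$ forces $\partial_x \N(G)$ to vanish identically (the polynomial $\partial_x \N(G)$ has $y$-degree $< d_y$ and vanishes at all $d_y$ distinct roots of $F$ over $\overline{\kp(x)}$, hence is zero). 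Together with $\partial_y \N(G) = 0$ this yields $\N(G_\mu) \in \kp[x^p, y^p]$.

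**The main obstacle.** The delicate point is step (2): making precise, over the field $\kp(x)$ of rational functions (not an algebraically closed constant field), the claim that $\N(G)(x,y_k)$ is a $p$-th power up to a term killed by the hypothesis $\rho_k' = 0$. One must handle the possibility that $\lc_y(F)$ is not a unit in $\kp[[x]]$ — this is exactly why the localization at a prime $a(x)$ coprime to $\lc_y(F)$ was set up in the previous subsection — and work in $\ap[y]$ or in $\overline{\kp((x))}$ where the $y_k$'s and the needed $p$-th roots live, then descend. The cleanest route is probably to avoid evaluating at roots altogether and instead manipulate $\N(G)$ modulo $F$ directly: write $G_\mu = \sum_j \alpha_j \hat E_j \partial_y E_j + (\text{multiple of } F)$ over $\overline{\kp(x^p)}$ using Lemma \ref{gao} applied with $\lp = \overline{\kp(x^p)}$ (legitimate since all $\rho_k$ lie there), and then compute that $\N$ applied to $\hat E_j \partial_y E_j = \partial_y E_j \cdot (F/E_j)$ produces, after the $\partial_y^{p-1}$ telescoping, an element of $\kp(x^p)[x, y^p]$ — this is essentially Niederreiter's lemma run factor-by-factor. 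I expect this second formulation to be the one that actually compiles into a short rigorous proof, with the bulk of the work being the verification that the Niederreiter operator sends $\hat E_j \partial_y E_j$ into the $p$-th-power subring modulo $F$, a computation that is morally already contained in the proof of Theorem 2 of \citep{Nied}.
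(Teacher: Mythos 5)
The paper's own ``proof'' of this lemma is a bare citation of Lemma 4 in \citep{L}, and your first route is essentially a reconstruction of that argument: reduce to $\partial_x\N(G_\mu)=0$, get $\rho_k'=0$ from $\ker(\D_a)=V(\overline{\kp(x^p)})$, and exploit the partial fraction form of the Niederreiter identity. The computation you leave implicit in step (2) is, writing $G_\mu/F=\sum_k\rho_k/(y-y_k)$ over $\overline{\kp(x)}$ and using $\partial_y^{p-1}\big((y-c)^{-1}\big)=-(y-c)^{-p}$ together with Frobenius and $\partial_y(F^p)=0$,
$$
\N(G_\mu)=\sum_{k=1}^{d_y}\big(\rho_k^p-\rho_k\big)\Big(\frac{F}{y-y_k}\Big)^{p}
$$
(up to the sign convention on $\N$); since $\big(F/(y-y_k)\big)^p=\lc_y(F)^p\prod_{j\ne k}(y^p-y_j^p)$ is annihilated by $\partial_x$ (because $\partial_x(y_j^p)=py_j^{p-1}y_j'=0$), one gets $\partial_x\N(G_\mu)=-\sum_k\rho_k'\,\big(F/(y-y_k)\big)^p=0$ directly, with no need to evaluate at the roots at all.

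Two points in your write-up do need repair. First, in step (3) you assert $\deg_y\N(G_\mu)<d_y$; this is false: $\N(G_\mu)$ has $y$-degree up to $p(d_y-1)$, and the codomain $\kp[x,y^p]_{pd_x,d_y-1}$ in the paper means degree $\le d_y-1$ \emph{in the variable $y^p$}. Your ``vanishes at the $d_y$ roots, hence zero'' argument still works, but it must be applied to $\partial_x\N(G_\mu)$ viewed as a polynomial of degree $\le d_y-1$ in $z=y^p$ evaluated at the $d_y$ points $y_1^p,\ldots,y_{d_y}^p$, which are distinct by injectivity of Frobenius. Second, the ``second formulation'' you say you expect to be the one that compiles cannot be run as stated: $\N$ is additive but only $\fp_p$-semilinear, $\N(\alpha H)=\alpha^pH^p-\alpha\,\partial_y^{p-1}(HF^{p-1})$, so you cannot apply it term by term to an $\lp$-linear combination $\sum_j\alpha_j\hat{E}_j\partial_y E_j$ and pull the $\alpha_j$ out; the mismatch between $\alpha_j^p$ and $\alpha_j$ is exactly what the displayed identity keeps track of, and what survives after applying $\partial_x$ is again only $\alpha_j'$. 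With these repairs your first route coincides with the proof behind the cited Lemma 4 of \citep{L}.
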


\begin{proof}
See \citep{L}, Lemma 4.
\end{proof}

Hence, previous lemma ensures that the $\fp_p$-linear map
\begin{eqnarray*}
\qquad\qquad\qquad\qquad\N_{a}:\ker(\D_a)&\longrightarrow &\kp[x^p,y^p]_{d_x,d_y-1} \\
\mu &\longmapsto& \N(G_{\mu})
\end{eqnarray*}
is well-defined. In particular, if $\kp=\fp_{p^k}$ is a finite field, the computation of $\ker(\D_{a})$ reduces to compute the kernel of a $\kp$-linear system of $\dim\ker(\D_a)\le r$ unknowns and $\mathcal{O}(kd_xd_y)$ equations over $\fp_p$. 

\section{Algorithms and complexity}\label{S6}

We combine now our previous results in order to give an algorithm for factorization and an algorithm for computing the number of rational factors, and we study their complexities.

\subsection{A rational factorization algorithm}

We obtain finally a deterministic algorithm for irreducible rational factorization of separable bivariate polynomials. The field $\fp$ now stands for $\kp$ if $\kp$ has characteristic zero or greater or equal to $d_x(d_y-1)$ and $\fp$ stands for the prime field $\fp_p$ of $\kp$ otherwise. Given a vector space $V$ over $\fp$, we denote by $\reb_{\fp}(V)$ the reduced echelon basis of $V$ over $\fp$. If we say compute $V$, this means compute $\reb_{\fp}(V)$.

\vskip4mm
\noindent
\textbf{Algorithm : Critical Factorization} 
\vskip4mm
\noindent
{\it \textbf{Input:} A bivariate polynomial $F\in \kp[x,y]$ separable with respect to $y$.
\vskip1mm
\noindent
\textbf{Output:} The irreducible rational factors of $F$.}

\vskip2mm
\noindent
\textbf{Step 0.} Compute the content $f\in \kp[x]$ of $F$ with respect to $y$ and do $F\leftarrow F/f$. Compute $f_1,\ldots,f_t$ the irreducible factors of $f$ over $\kp$.

\vskip2mm
\noindent
\textbf{Step 1.} Compute the truncated analytic factors $[\F_1]^{d_x+1},\ldots,[\F_s]^{d_x+1}$ of $F$. If $s=1$ then return $F$ is irreducible. Otherwise, build the polynomials $\big[\hat{\F_i}\partial_y\F_i]^{d_x+1}$ for all $i=1,\ldots,s$ and initialize $S_0\leftarrow \fp^s$.
\vskip2mm
\noindent
\textbf{Step 2.} If $\lc_y(F)(0)\ne  0$, let $a\leftarrow x$. Otherwise, compute $a\in \fp[x]$ of degree $a\in \mathcal{O}(\log d_x)$ such that $a$ is irreducible and coprime to $u$. 
\vskip2mm
\noindent
\textbf{Step 3.} Build the $\fp$-linear system associated to $\D_a$ and compute $S_0\leftarrow \ker(\D_a)$. If $\dim_{\fp}S_0=1$, return $F$ is irreducible. If $\kp$ has characteristic zero or greater than $d_x(d_y-1)$, then go to Step 5. Else go to Step 4.
\vskip2mm
\noindent
\textbf{Step 4.} Build the $\fp$-linear system associated to $\N_a$ and compute $S_0\leftarrow S_0\cap \ker(\N_a))$. If $\dim_{\fp}S_0=1$, return $F$ is irreducible. If $\reb(S_0)$ does not form a partition of $(1,\ldots,1)$ or if $Z\ne 0$ then go to Step 5. Otherwise, go to Step 7.
\vskip2mm
\noindent
\textbf{Step 5.} Compute $q:=\lfloor v/d\rfloor$ where $v:=\val_x(\Disc_y(F))$ and $d$ is the minimal $y$-degree of the $[\F_i]^{d_x+1}$. If $q\le d_x$, go to Step 7, otherwise go to Step $6$.
\vskip2mm
\noindent
\textbf{Step 6.} Compute the $q$-truncated analytic factors of $F$ and compute $S_0 \leftarrow S_0\cap W^{q}$.
\vskip2mm
\noindent
\textbf{Step 7.} We have $S=S_0$. For each $v_j\in \reb(S)$, $j=1,\ldots,r$, compute 
$$
\tilde{F}_j:=\Big[\lc(F)\prod_{i=1}^s \F_i^{v_{ji}}\Big]^{d_x+1}
$$
and compute the primitive part $F_j$ of $\tilde{F}_j$ with respect to $y$. 
\vskip2mm
\noindent
\textbf{Step 8.} Return $f_1,\ldots,f_t,F_1,\ldots, F_r$.

\vskip4mm
\noindent
The proof of Theorem \ref{t2} follows from the following proposition.

\begin{proposition}\label{algo1}
The algorithm Critical Factorization is correct. It performs at most one univariate factorisation in $\kp[x]$ of degree $d_x$ and :

- If $\kp$ has characteristic zero or greater than $d_x(d_y-1)$, then at most
$$
\mathcal{O}(d_xd_y^2+d_y\max(d_x,q) s^{\omega-1})+\C(\max(d_x,q))
$$
arithmetic operations over $\kp$. 

- If $\kp=\fp_{p^k}$, then at most $$\wt{\mathcal{O}}(kd_xd_y^2+kd_y\max(d_x,q) s^{\omega-1})+\sO(k)\C(\max(d_x,q))$$ operations over $\fp_p$.
\end{proposition}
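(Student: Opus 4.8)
The plan is to prove correctness step by step, following the structure of the algorithm, and then to account for the cost of each step. For correctness, I would argue as follows. Step 0 reduces to the primitive part of $F$, and the content $f$ is factored once in $\kp[x]$; by the factorization \eqref{factorat} this produces exactly the extra univariate factors. Step 1 computes the $(d_x+1)$-truncated analytic factors, which is the quantity $\C(d_x)$ by definition, and the case $s=1$ is trivially handled. The heart of correctness is that at the end of Step 6 we have $S_0=S$. This follows by tracking which vector space $S_0$ represents after each step: after Step 3, $S_0=\ker(\D_a)=V(\overline{\kp(x^p)})$ (Corollary \ref{cor2}), which equals $V(\bar\kp)$ in characteristic zero or $p>2d_x(d_y-1)$; after Step 4 in small characteristic, intersecting with $\ker(\N_a)$ gives $S_0=V(\fp_p)\cap V(\overline{\kp(x^p)})=V(\bar\kp)$ by Lemmas \ref{VFp} and the following lemma on $\N$ restricted to $\ker(\D_a)$; and after Steps 5--6, intersecting with $W^q$ for $q>N$ (note $q=\lfloor v/d\rfloor \ge N$ by Remark \ref{remtrunc}, since $v=\val_x\Disc_y(F)$ and $d=\min d_i$) yields $S_0=V(\bar\kp)\cap W^q=S$ by Theorem \ref{critic}. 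The shortcut in Step 4 to Step 7 when $\reb(S_0)$ is a partition of $(1,\dots,1)$ and $Z=0$ is justified by Remark \ref{remZ}: $Z=0$ forces $S=V(\bar\kp)$ so $S_0$ is already $S$. In Step 7, the formula $F_j=\prim[\lc(F)\prod_i \F_i^{v_{ji}}]^{d_x+1}$ recovers the rational factors from \eqref{prod}: since $c_j(0)=1$ and the $v_j$ partition $(1,\dots,1)$, the $(d_x+1)$-truncation determines $F_j$ exactly (one uses that $\deg_x F_j\le d_x$).

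For the complexity, I would go step by step. The univariate factorization of $f$ in Step 0, and the separable-part preprocessing, are the one univariate factorization of degree $\le d_x$ promised in the statement; computing the content is negligible. Step 1 costs $\C(d_x)$ plus the cost of forming the $s$ products $[\hat\F_i\partial_y\F_i]^{d_x+1}$, each being a product of polynomials of $y$-degrees summing to $d_y$ with $x$-precision $d_x+1$, hence $\wt\sO(d_xd_y)$ each and $\wt\sO(d_xd_y^2)$ in total (actually one does better using $\hat\F_i = F/\F_i$, but the crude bound suffices). Step 2 requires finding an irreducible $a$ coprime to $u$ of degree $\sO(\log d_x)$; such an $a$ exists and can be found by testing irreducibility of the first few candidates, at cost negligible for our purpose — this is the point where I would invoke that there are $\gg \deg u$ irreducible polynomials of degree $\sO(\log\deg u)$ over any field. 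Step 3 builds the matrix of $\D_a$: this is an $\fp$-linear map from $\fp^s$ into a space of dimension $\sO(d_xd_y)$, so the matrix has $s$ columns and $\sO(d_xd_y)$ rows; each column is obtained by computing $\D(G_\mu)$ for $\mu$ a basis vector and performing one euclidean division by $F$ in $\ap[y]$, at cost $\wt\sO(d_xd_y)$ per column (using that $\deg a=\sO(\log d_x)$ when $a\ne x$, which only adds logarithmic factors), hence $\wt\sO(d_xd_y s)$ to build it and $\sO(d_xd_y s^{\omega-1})$ to compute its kernel by fast linear algebra (the system has $\sO(d_xd_y)$ rows and $s\le d_y$ columns, so kernel computation costs $\sO(d_xd_y\cdot s^{\omega-1})$). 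Step 4 is analogous over $\fp_p$ with the extra factor $k$ coming from $\kp=\fp_{p^k}$ and $\sO(kd_xd_y)$ equations. Steps 5--6 cost the computation of $v=\val_x\Disc_y(F)$ (a resultant-type computation, $\wt\sO(d_xd_y^2)$, or better via the analytic factors using Lemma \ref{order}), plus, when $q>d_x$, the cost $\C(q)$ of the $q$-truncated analytic factorization and $\wt\sO(qd_y^2)$ to form the $[\hat\F_i\partial_y\F_i]^q_{d_x+1}$ and then $\sO(qd_y s^{\omega-1})$ for the kernel of $W^q$ restricted. Step 7 costs $\wt\sO(d_xd_y)$ for the products since the $y$-degrees of the $F_j$ sum to $d_y$. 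Adding up gives $\sO(d_xd_y^2 + d_y\max(d_x,q)s^{\omega-1}) + \C(\max(d_x,q))$ in the large/zero characteristic case, and the stated bound with the factor $k$ over $\fp_{p^k}$.

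The main obstacle I expect is the bookkeeping in Steps 3 and 4: one must be careful that the euclidean division defining $\D_a$ is performed in $\ap[y]$ rather than $\kp[[x]][y]$ when $\lc_y(F)(0)=0$, and that this costs only an extra $\sO(\log d_x)$ factor (absorbed in the $\wt\sO$); one must also verify that the matrix sizes are exactly as claimed so that the $s^{\omega-1}$ (rather than $s^\omega$) bound for rectangular kernel computation applies — this uses that the number of rows $\sO(d_xd_y)$ dominates the number of columns $s\le d_y$, so the linear algebra decomposes into $\sO(d_x)$ blocks each costing $\sO(d_y s^{\omega-1})$. A secondary subtlety is justifying the shortcut test "$Z\ne 0$" in Step 4: computing $Z$ requires only the $(d_x+1)$-truncated data already in hand (it is the kernel of $\mu\mapsto \sum\mu_i[\hat\F_i\partial_y\F_i]^{d_x+1}$, an $\fp$-linear map into a space of dimension $\sO(d_xd_y)$), so it costs no more than Step 3, and when it vanishes Remark \ref{remZ} guarantees we may skip Steps 5--6. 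Everything else is a routine assembly of softly-linear polynomial arithmetic and fast matrix computations.
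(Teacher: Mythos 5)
Your proposal follows essentially the same route as the paper: the same step-by-step tracking of $S_0$ through Corollary \ref{cor2}, Lemma \ref{VFp}, Proposition \ref{NumFac}, Theorem \ref{critic} and Corollary \ref{cor1}, and the same per-step cost accounting. The only slips are minor: after Step 4 one has $S_0=V(\fp_p)$ rather than $V(\bar{\kp})$, and the shortcut to Step 7 is justified by Proposition \ref{NumFac} (since $V(\fp)=S\oplus Z$, $Z=0$ gives $S=S_0$) rather than by Remark \ref{remZ}; neither affects the argument.
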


\begin{proof}
We have $S_0=V(\fp_p)$ at the end of Step 4 by Lemma \ref{constantresidue}. Hence $S=S_0$ if and only if $Z=0$ by Proposition \ref{NumFac}. If $\reb(S_0)$ does not form a partition of $(1,\ldots,1)$ or $Z\ne 0$, then $S\ne S_0$ and we need to go to Step $5$. Otherwise, the recombination problem is solved and we can go directly to Step 7. 

We have $S_0=V(\bar{\kp})$ or $S_0=V(\fp)$ at the beginning of Step 5. 
Since $\deg_x \lc(\F_i)\le d_x$, we have that $\deg_y \F_i=\deg_y [\F_i]^{d_x+1}$. Hence $q\ge N$ by the Remark \ref{remtrunc}. It follows from Theorem \ref{critic} and Corollary \ref{cor1} that $S=S_0$ at Step 7. For all $j=1,\ldots,r$, we have that $\tilde{F}_j= \frac{\lc(F)}{\lc(F_j)}F_j$ since 
$$\deg_x(\lc(F)/\lc(F_j)) + \deg_x (F_j ) \le \deg_x F.$$
Hence, the algorithm returns a correct answer. Let us study its complexity.

\textbf{Step 0.} Computation of the content of $F$ requires $\tilde{\sO}(d_xd_y)$ arithmetic operations over $\kp$.

\textbf{Step 1.} Computations of the $(d_x+1)$-truncated $\F_i$'s has complexity $\C(d_x)$ by definition. Then we compute all $\big[\hat{\F_i}\partial_y\F_i]^{d_x+1}$ with $\tilde{\sO}(sd_xd_y)$ operations in $\kp$.

\textbf{Step 2.} If $\kp$ has characteristic zero or greater than $d_x$, we can take $a=x-c$ for some $c$ such that $\lc(F)(c)\ne 0$. Otherwise, a basic approach (certainly not the most efficient) consists to remark that for $n>\log_p (\deg_x \lc(F))$, the polynomial
$$
\tilde{a}_{n}:=\frac{x^{p^n}-x}{\gcd(x^{p^n}-x,\lc(F))}
$$
has positive degree and is coprime to $u$. Then, we take for $a$ an irreducible factor of $\tilde{a}_{n}$, which has necessary degree less or equal to $n\in \mathcal{O}(\log d_x)$. Thanks to fast gcd computations, we compute $\tilde{a}_{n}$ within $\tilde{\mathcal{O}}(\max(\deg_x \lc(F), p^n))\subset \tilde{\mathcal{O}}(d_x)$ operations. Then, we need to perform a univariate factorization of degree at most $d_x$ in order to find $a$.

\textbf{Step 3.} In order to build the linear system of the map $\D_a$, we need first to compute $D_i:=\D(G_{e_i})$ for $e_i$ varying over the canonical basis of $\fp^s$. This has complexity $\tilde{\sO}(sd_xd_y)$. Then, we need to compute the $a$-adic expansion of the $D_i$'s and $F$. This costs $\tilde{\sO}(sd_xd_y)$ thanks to \citep{GG}, Theorem 9.15. We need then to perform $s$ euclidean divisions  in $(\ap/a^m)[y]$ of polynomials of degree $\sO(d_y)$, and where $m\in \sO(d_x/a)$. This costs $\tilde{\sO}(sd_y)$ operations in $\ap/a^m$, hence $\tilde{\sO}(sd_xd_y)$ operations over $\kp$.
Then computing the reduced echelon basis of $\ker(\D_a)$ requires $\sO(d_xd_y s^{\omega-1})$ operations in $\kp$.

\textbf{Step 4.} We compute here $S_0$ with at most $\wt{\sO}(kd_xd_ys^{\omega-1})$ arithmetic operations over $\fp_p$ thanks to \citep{L}, Proposition 4. It has to be noticed that the construction of the linear system associated to $\N_a$ might constitute a bottelneck to the algorithm. The $\fp$-vector space $Z$ is determined by $s$ unknowns and $\sO(d_xd_y)$ equations over $\kp$, hence testing $Z\ne 0$ requires at most  $\mathcal{O}(d_xd_ys^{\omega-1})$ operations over $\kp$.  Note that $Z=0$ as soon as $F$ is locally irreducible along $x=0$ (Lemma \ref{trivZ}).

\textbf{Step 5.} Since $v\le 2d_xd_y$, we can compute $\Disc_y(F)$ by considering $F$ as a polynomial in $y$ with coefficient in the ring $\kp[x]/(x^{2d_xd_y})$. This requires $\wt{\mathcal{O}}(d_xd_y^2)$ arithmetic operations over $\kp$ thanks to fast euclidean algorithm \citep{GG}. Computing $q$ then has a negligeable cost.

\textbf{Step 6.} Computing the higher truncations of the $\F_i$'s requires $\C(N)$ operations over $\kp$ by definition. Then computing all $\big[\hat{\F_i}\partial_y\F_i]^{N}
_{d_x+1}$ requires $\tilde{\sO}(sNd_y)$ operations in $\kp$. Building the linear system that determines the equations of $W^N\cap S_0$ requires at most $\tilde{\sO}(Nd_y)$ operations in $\kp$ thanks to the sub-product tree technique. Then computing the reduced echelon basis of $W^N\cap S_0$ requires 
at most $\sO((N-d_x)d_yt^{\omega-1})$ operations over $\kp$, where $t=\dim_{\kp}U\le s$.

\textbf{Step 7.} Let $n_j:=\deg_y F_j$. Then the computation of $\tilde{F}_j$ requires $\tilde{\sO}(d_x n_j)$ operations in $\kp$, hence a total of $\tilde{\sO}(d_xd_y)$ operations for all $j$. The cost of primitive parts computations amounts to the same number of operations.

The proof follows by adding all these costs, and by remarking that one arithmetic operation over $\fp_{p^k}$ requires $\sO(k)$ operations over $\fp_p$.
\end{proof}


\begin{remark} \textit{(About Step 5.)} The cost of the discriminant computation might be very high in terms of bit complexity. Moreover, it might happen that $N$ and $q$ differ by a factor of the order of magnitude of $d_y$. Hence it is much more preferable to approximate the $q_i$'s (hence $N$) during the computation of the $\F_i$'s, for instance by using the relations with the characteristic Puiseux exponents of the branches $\F_i$ (see for instance \citep{PR}).  Hence, the cost of Step 5 is included in $\C(N)$.
Note that if the reduced echelon basis of $V(\bar{\kp})$ does not form a partition of $(1,\ldots,1)$ or if $Z\ne 0$, then necessarily $N>d_x$. Finally, note that if $\kp=\qp$ (or more generally a number field), there are strategies to compute the valuation of the discriminant by working modulo a well chosen prime $p$ \citep{PR}.
\end{remark}

%
%

\subsection{Algorithms for the number of irreducible rational factors}

In the first algorithm, we suppose that $\kp$ has positive characteristic, or that $\kp$ is a decomposition field of $F$. The field $\fp$ stands for $\kp$ if $\kp$ has characteristic zero or for the prime field $\fp_p$ of $\kp$ otherwise. 

\vskip4mm
\noindent
\textbf{Algorithm : Number of Factors} 
\vskip4mm
\noindent
{\it \textbf{Input:} A bivariate polynomial $F\in \kp[x,y]$ separable with respect to $y$.
\vskip1mm
\noindent
\textbf{Output:} The number of irreducible rational factors.}

\vskip2mm
\noindent
\textbf{Step 0.} Compute the content $f\in \kp[x]$ of $F$ with respect to $y$ and do $F\leftarrow F/f$. Compute $t$ the number of irreducible factors of $f$ over $\kp$.

\vskip2mm
\noindent
\textbf{Step 1.} Compute the $d_x+1$-truncated analytic factors of $F$. 
\vskip2mm
\noindent
\textbf{Step 2.} Compute $r\leftarrow\dim V(\fp)$. If $r=1$ then return $t+1$.
\vskip2mm
\noindent
\textbf{Step 3.} Compute $r\leftarrow r - \dim Z$.  Return $t+r$.

\vskip4mm
\noindent

\begin{proposition}\label{algo2}
The algorithm Number of Factors  is correct. It performs at most one univariate factorisation in $\kp[x]$ of degree $d_x$ and :

- If $\kp$ has characteristic zero, then 
$
\mathcal{O}(d_x d_y s^{\omega-1})+\C(d_x)
$
operations over $\kp$. 

- If $\kp=\fp_{p^k}$, then $\mathcal{O}(kd_x d_y s^{\omega-1})+\sO(k)\C(d_x)$  operations over $\fp_p$.
\end{proposition}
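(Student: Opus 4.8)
\emph{Correctness.} The proof will run parallel to that of Proposition~\ref{algo1}. First I would pass to the primitive case: after Step~0 one has $F=f\,F'$ with $f\in\kp[x]$ and $F'$ primitive with respect to $y$; hypothesis $(H)$ then makes $F'$ squarefree and separable with respect to $y$, and the irreducible rational factorization of $F$ consists of the irreducible factors of $f$ (numbering $t$) together with those of $F'$, so it is enough to check that after Steps~1--3 the variable $r$ equals the number of irreducible rational factors of $F'$. The crux is that Step~2 computes $\dim V(\fp)$. In characteristic zero the hypothesis that $\kp$ is a decomposition field of $F$ (that is, all absolute factors of $F$ are defined over $\kp$) gives $V(\kp)=V(\bar\kp)$ by Lemma~\ref{KKbar}, while $V(\bar\kp)=\ker(\D_a)$ by Corollary~\ref{cor2}; hence, with $\fp=\kp$, Step~2 indeed returns $\dim_\kp V(\fp)$. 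When $\kp=\fp_{p^k}$ one has $\fp=\fp_p$ and $V(\fp_p)$ is obtained, exactly as in Step~4 of the factorization algorithm, by intersecting $\ker(\D_a)$ with $\ker(\N_a)$, combining Corollary~\ref{cor2} and Lemma~\ref{VFp}. Now if $\dim V(\fp)=1$, then $S\subseteq V(\fp)$ with $\dim S\ge 1$ forces $\dim S=1$ and $Z=0$, so $r=1$ and the output $t+1$ is correct; otherwise Proposition~\ref{NumFac} yields $V(\fp)=S\oplus Z$, hence $r=\dim V(\fp)-\dim Z=\dim S$, and Step~3 returns the correct value $t+r$.

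\emph{Complexity.} I would reuse the accounting from the proof of Proposition~\ref{algo1}. Step~0 costs $\wt{\sO}(d_xd_y)$ for the content plus one univariate factorization in $\kp[x]$ of degree at most $d_x$ for the factors of $f$ (the auxiliary $a$, chosen irreducible and coprime to $\lc_y(F)$, is $x$ or $x-c$ in characteristic zero, and in small characteristic comes from a further factorization of degree $\sO(\log d_x)$ that is dominated). Step~1 costs $\C(d_x)$ by definition of $\C$, plus $\wt{\sO}(s\,d_xd_y)$ to form the polynomials $[\hat{\F_i}\partial_y\F_i]^{d_x+1}$. In Step~2, building and solving the $\D_a$-system costs $\wt{\sO}(s\,d_xd_y)+\sO(d_xd_y\,s^{\omega-1})$ operations over $\kp$, as established for Step~3 in the proof of Proposition~\ref{algo1}, and when $\kp=\fp_{p^k}$ one adds the $\N_a$-system, handled within $\wt{\sO}(k\,d_xd_y\,s^{\omega-1})$ operations over $\fp_p$ by \citep{L}, Proposition~4. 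In Step~3, $Z$ is cut out by $s$ unknowns and $\sO(d_xd_y)$ equations over $\kp$, so $\dim Z$ is computed with $\sO(d_xd_y\,s^{\omega-1})$ operations over $\kp$, equivalently $\wt{\sO}(k\,d_xd_y\,s^{\omega-1})$ over $\fp_p$. Summing these costs, and using that one arithmetic operation over $\fp_{p^k}$ amounts to $\sO(k)$ operations over $\fp_p$, yields $\sO(d_xd_y\,s^{\omega-1})+\C(d_x)$ over $\kp$ in characteristic zero and $\sO(k\,d_xd_y\,s^{\omega-1})+\sO(k)\C(d_x)$ over $\fp_p$ when $\kp=\fp_{p^k}$.

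\emph{Main obstacle.} The only genuinely non-formal point will be the identification of the space actually computed in Step~2 with $V(\fp)$: in characteristic zero this is precisely where the decomposition-field hypothesis enters, through Lemma~\ref{KKbar}, and in positive characteristic it relies on the Niederreiter operator together with the fact, already used in Step~4 of the factorization algorithm, that $\N(G_\mu)\in\kp[x^p,y^p]$ for $\mu\in\ker(\D_a)$, which is what keeps the $\fp_p$-linear algebra within the announced bound. Everything else is the routine bookkeeping already carried out for Proposition~\ref{algo1}.
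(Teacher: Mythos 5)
Your proposal is correct and follows essentially the same route as the paper: correctness rests on Proposition~\ref{NumFac} (via $V(\fp)=S\oplus Z$), with $V(\fp)$ identified as $\ker(\D_a)$ through Lemma~\ref{KKbar} and Corollary~\ref{cor2} in characteristic zero under the decomposition-field hypothesis, and via the Niederreiter operator $\N_a$ in positive characteristic, while the complexity bookkeeping is inherited from the proof of Proposition~\ref{algo1}. Your write-up is merely more explicit than the paper's terse version, with no substantive divergence.
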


\begin{proof}
The correctness of the algorithm follows from Proposition \ref{NumFac}. In positive characteristic, we can compute $V(\fp)$ as in the previous algorithm and the complexity analysis follows from the proof of previous proposition \ref{algo1}. In characteristic zero, we have that $V(\fp)=V(\bar{\kp})$ by Lemma \ref{KKbar} so that we compute $V(\fp)$ as in Step 3 of algorithm Critical Factorization. The $\fp$-vector space $Z$ is determined by $s$ unknowns and $\sO(d_xd_y)$ equations over $\kp$, hence testing $Z\ne 0$ requires at most  $\mathcal{O}(d_xd_ys^{\omega-1})$ operations over $\kp$. 
\end{proof}
\vskip2mm
\noindent

Finally, we have the following algorithm for an irreducibility test. Here, no hypothesis are made on the field $\kp$. 
\vskip4mm
\noindent
\textbf{Algorithm : Irreducibility Test} 
\vskip4mm
\noindent
{\it \textbf{Input:} A bivariate polynomial $F\in \kp[x,y]$ separable with respect to $y$.
\vskip1mm
\noindent
\textbf{Output:} True if $F$ is irreducible over $\kp$, False otherwise.}

\vskip2mm
\noindent
\textbf{Step 0.} If $F$ is not primitive with respect to $y$, then return False. Otherwise, replace $F$ by its primitive part. 
\vskip2mm
\noindent
\textbf{Step 1.} Compute the $n$-truncated analytic factors of $F$ with $n=d_x+1$ if $\Char(\kp)>0$ and $n=2d_x$ otherwise.
\vskip2mm
\noindent
\textbf{Step 2.} Compute $r\leftarrow \dim V(\fp)$ if $\Char(\kp)>0$ and $r\leftarrow \dim V(\bar{\kp})\cap W^{2dx}$ otherwise. If $r=1$ return True.
\vskip2mm
\noindent
\textbf{Step 3.} Compute $r\leftarrow r-\dim Z$ if $\Char(\kp)>0$ and $r\leftarrow r -\dim Z\cap W^{2d_x}$ otherwise. 
\vskip2mm
\noindent
\textbf{Step 4.} Return True if $r=1$, False otherwise. 

\vskip4mm
\noindent
\begin{proposition}\label{algo3}
The algorithm Irreducibility Test  is correct. It performs at most one univariate factorisation in $\kp[x]$ of degree $d_x$ and :

- If $\kp$ has characteristic zero, then 
$
\mathcal{O}(d_x d_y s^{\omega-1})+\C(2d_x)
$
operations over $\kp$. 

- If $\kp=\fp_{p^k}$, then  $\mathcal{O}(kd_x d_y s^{\omega-1})+\sO(k)\C(d_x)$ operations over $\fp_p$.
\end{proposition}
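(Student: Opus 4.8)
The plan is to prove correctness by splitting on the characteristic --- using Proposition \ref{NumFac} when $\Char(\kp)>0$ and Corollary \ref{propirr} when $\Char(\kp)=0$ --- and then to obtain the complexity bound directly from the cost analysis of the algorithm Critical Factorization carried out in Proposition \ref{algo1}.

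For correctness, Step 0 is immediate: a non-primitive $F$ has a nonconstant factor in $\kp[x]$, hence is reducible, so returning False is right; otherwise we assume $F$ primitive. If $\Char(\kp)>0$, then exactly as in Steps 3--4 of Critical Factorization, combining $\D_a$ (Corollary \ref{cor2}: $\ker(\D_a)=V(\overline{\kp(x^p)})$, with $a$ coprime to $\lc_y(F)$) with the Niederreiter operator $\N_a$ (Lemma \ref{VFp}) computes $V(\fp_p)$ from the $(d_x+1)$-truncated analytic factors; by Proposition \ref{NumFac}, $V(\fp_p)=S\oplus Z$, so the quantity computed in Steps 2--3 is $\dim V(\fp_p)-\dim Z=\dim S$, the number of irreducible rational factors of $F$, and $F$ is irreducible iff this equals $1$. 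If $\Char(\kp)=0$, Corollary \ref{cor2} gives $\ker(\D_a)=V(\bar\kp)$, while the linear equations defining $W^{2d_x}$ depend only on the $2d_x$-truncated analytic factors produced in Step 1; hence Steps 2--3 return $\dim V(\bar\kp)\cap W^{2d_x}-\dim Z\cap W^{2d_x}$, which by Corollary \ref{propirr} equals $1$ exactly when $F$ is irreducible over $\kp$. This settles correctness, and shows that the truncation order of Step 1 is sufficient.

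For the complexity, Step 0 costs $\tilde\sO(d_xd_y)$ operations over $\kp$ and Step 1 costs $\C(d_x)$ (resp. $\C(2d_x)$) by definition of $\C$. The rest is the same linear algebra as in Steps 2--4 of Critical Factorization: setting up the $\D_a$-system costs $\tilde\sO(sd_xd_y)$ and computing its kernel costs $\sO(d_xd_ys^{\omega-1})$ over $\kp$, together with at most one univariate factorization in $\kp[x]$ of degree $\le d_x$ to produce $a$ when $\lc_y(F)(0)=0$; in characteristic zero, assembling the $\tilde\sO(d_xd_y)$ equations cutting out $W^{2d_x}$ via a subproduct tree and intersecting, as well as computing $Z$ and $Z\cap W^{2d_x}$, each stay within $\sO(d_xd_ys^{\omega-1})$, so the total is $\sO(d_xd_ys^{\omega-1})+\C(2d_x)$ over $\kp$; over $\kp=\fp_{p^k}$ one further adds the $\N_a$-step, costing $\tilde\sO(kd_xd_ys^{\omega-1})$ over $\fp_p$ by \citep{L}, Proposition 4, and the $\sO(\cdot)$-costs over $\kp$ translate into $\sO(k)$ times as many operations over $\fp_p$, giving $\sO(kd_xd_ys^{\omega-1})+\sO(k)\C(d_x)$.

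The substantive ingredient is already isolated as Corollary \ref{propirr}, which rests on the a priori bound $M<2d_x$ for an irreducible $F$ obtained from $\val_x(\Disc_y F)\le 2d_xd_y$ through Lemma \ref{order}; what remains to watch in the write-up is purely bookkeeping --- checking that every linear system used in Steps 2--3 is built from the truncated analytic data of Step 1, and that intersecting with $W^{2d_x}$ does not raise the linear-algebra cost above $\sO(d_xd_ys^{\omega-1})$, since $W^{2d_x}$ is defined by only $\tilde\sO(d_xd_y)$ equations.
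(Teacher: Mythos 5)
Your proof is correct and follows exactly the route the paper takes: correctness from Proposition \ref{NumFac} in positive characteristic and Corollary \ref{propirr} in characteristic zero, with the complexity inherited from the cost analysis of the Critical Factorization algorithm (Proposition \ref{algo1}). The paper's own proof is just a two-line citation of these same ingredients, so your write-up is simply a more detailed version of the same argument.
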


\begin{proof}
The correctness of the algorithm follows from Proposition \ref{NumFac} in positive characteristic and from Proposition \ref{propirr} otherwise. The complexity analysis follows from the proof of the two previous algorithms.
\end{proof}

The proof of Theorem \ref{t1} follows from the two previous propositions \ref{algo2} and \ref{algo3}.

\subsection{Locally irreducible polynomials}\label{sslocirr}

We recall from the introduction that we say that $F$ is locally irreducible along the line $x=0$ (resp. absolutely locally irreducible) if the germs of curves $(C,P)\subset (\mcpp_{\kp}^2,P)$ defined by $F$ are irreducible over $\kp$ (resp. over $\bar{\kp}$) at each rational place $P$ of the line $x=0$, including the place at infinity.

\begin{lemma}\label{trivZ} Suppose that $\kp$ has characteristic greater or equal to $d_y$. If $F$ is locally irreducible along the line $x=0$, then $Z=0$.
\end{lemma}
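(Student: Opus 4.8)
The plan is to prove the stronger assertion that the truncated products $w_i:=[\hat\F_i\partial_y\F_i]^{d_x+1}$, $i=1,\dots,s$, are $\kp$-linearly independent: since $Z\subseteq\{\mu\in\kp^s:\sum_i\mu_iw_i=0\}$ for any choice of the auxiliary subfield $\fp\subseteq\kp$, this yields $Z=0$. So let $\mu\in\kp^s$ with $\sum_i\mu_iw_i=0$; I want $\mu=0$. If $s=1$ there is nothing to prove, since $w_1=[\partial_y F]^{d_x+1}=\partial_y F$, which is nonzero by separability and has $x$-degree $\le d_x$. So assume $s\ge2$; then $d_i:=\deg_y\F_i\le d_y-1$ for all $i$, and $d_i<\chara\kp$ whenever $\chara\kp>0$.

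The first step is the logarithmic-derivative identity underlying Lemma~\ref{lprod}. Put $R:=\kp[[x]]/(x^{d_x+1})$ and let $f,f_i,\hat f_i\in R[y]$ be the classes of $F,\F_i,\hat\F_i$; recall that $\F_i$ divides $F$, so $\hat f_i=f/f_i$, and that $f$ and each $f_i$ are non-zero-divisors of $R[y]$. In $R[y]$ one has $\sum_i\mu_i\,\hat f_i\,\partial_y f_i=f\cdot\sum_i\mu_i\frac{\partial_y f_i}{f_i}$, so, $f$ being a non-zero-divisor, $\sum_i\mu_iw_i=0$ is equivalent to $\sum_i\mu_i\frac{\partial_y f_i}{f_i}=0$ in the total ring of fractions of $R[y]$. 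Since the reduction $R[y]\to\kp[y]$ carries non-zero-divisors to nonzero polynomials it extends to the total fraction rings, and reducing modulo $x$ gives $\sum_i\mu_i\frac{\partial_y\F_i(0,y)}{\F_i(0,y)}=0$ in $\kp(y)$.

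Now local irreducibility enters. If $\F_i$ has a unit leading coefficient, then $\F_i(0,y)$ is monic of degree $d_i$, and since a factorization of it into two coprime factors would lift by Hensel's lemma and contradict irreducibility of $\F_i$, we must have $\F_i(0,y)=p_i(y)^{e_i}$ for a monic irreducible $p_i\in\kp[y]$ and some $1\le e_i\le d_i$; in particular $e_i$ is a unit in $\kp$ and $p_i$ is separable. If instead the leading coefficient of $\F_i$ has valuation $n_i>0$, then by Lemma~\ref{lval} the Newton polygon of $\F_i$ is the single segment from $(0,0)$ to $(d_i,n_i)$, so every coefficient of $\F_i$ except the constant term has positive $x$-valuation and $\F_i(0,y)$ is a nonzero constant. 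Finally, via the correspondence between the irreducible analytic factors of $F$ and the $\kp$-branches of the curve $C$ along $x=0$, the assumption that $F$ is locally irreducible along $x=0$ says precisely that the irreducibles $p_i$ attached to the monic factors are pairwise distinct and that at most one index, say $i_0$, satisfies $n_{i_0}>0$.

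Substituting, the terms with $n_i>0$ disappear and we are left with $\sum_i\mu_i e_i\frac{p_i'}{p_i}=0$ in $\kp(y)$, the sum running over the monic factors. As the $p_i$ are pairwise distinct separable irreducibles, the $i$-th summand is the unique one with a pole along the place defined by $p_i$, and that pole is simple with residue $\mu_i e_i$; vanishing of the sum thus forces $\mu_i e_i=0$, hence $\mu_i=0$, for every monic factor. So $\mu$ is supported on $\{i_0\}$, and if no index has $n_{i_0}>0$ we already get $\mu=0$. Otherwise $\sum_i\mu_iw_i=0$ becomes $\mu_{i_0}\,\hat f_{i_0}\,\partial_y f_{i_0}=0$ in $R[y]$; here $\hat f_{i_0}$ (a product of $u$ and the remaining $f_i$'s) is a non-zero-divisor, and $\partial_y f_{i_0}\neq0$ because its leading $y$-coefficient $d_{i_0}x^{n_{i_0}}$ is nonzero in $R$ (as $n_{i_0}\le\deg_x\lc_y(F)\le d_x$ and $d_{i_0}$ is a unit in $\kp$). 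Hence $\hat f_{i_0}\,\partial_y f_{i_0}\neq0$, so $\mu_{i_0}=0$; therefore $\mu=0$, and $Z=0$. I expect the main obstacle to be the third paragraph — pinning down exactly that "$F$ locally irreducible along $x=0$" forces the analytic factors to have pairwise distinct centres, with at most one branch at infinity — together with the final handling of the non-monic analytic factor, for which reduction modulo $x$ discards the data of the leading coefficient and one must work directly inside $R[y]$.
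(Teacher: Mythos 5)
Your proof is correct and follows essentially the same route as the paper's: reduce the relation modulo $x$, use Hensel's lemma plus local irreducibility to see that the $\F_i(0,y)$ are powers of pairwise distinct separable primes (with at most one factor concentrated at infinity), kill the affine coefficients using the characteristic bound, and finish the single branch at infinity via the leading coefficient $d_{i_0}x^{n_{i_0}}$ of degree $\le d_x$. The logarithmic-derivative/partial-fraction packaging and the separate treatment of $s=1$ are only cosmetic differences from the paper's divisibility argument.
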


\begin{proof}
Let $\mu\in Z$. we have in particular 
\begin{equation}\label{relation}
\sum_{i=1}^s \mu_i\hat{\F_i}\partial_y \F_i(0,y)=0.
\end{equation}
Let us consider first an analytic factor $\F_i$ corresponding to affine place of $C$ along $x=0$. Since $F$ is locally irreducible along the line $x=0$. It follows from Hensel's lemma \citep{GG} that $\F_i(0,y)$ is a power of a prime polynomial that is coprime to $\hat{\F_i}(0,y)$. Hence relation  $(\ref{relation})$ combined with Gauss Lemma imposes that $\F_i(0,y)$ divises $\mu_i\partial_y \F_i(0,y)$, hence $\mu_i=0$ thanks to the assumption on the characteristic of $\kp$. Suppose now that $F$ has an analytic factor, say $\F_1$ that vanishes at $(0,\infty)$. By the local irreducibility assumption, $\F_1$ is the unique such factor. Hence $\mu\in Z$ becomes equivalent to that 
$$
\mu_2=\cdots=\mu_s=0\quad {\rm and}\quad \mu_1[\hat{\F_1}\partial_y(\F_1)]^{d_x+1}=0,
$$
thanks to what we proved for the affine places. The leading coefficient of $\hat{\F_1}\partial_y(\F_1)$ is equal to $d_1\lc_y(F)$. It has degree $\le d_x$, and $d_1\ne 0$ by assumption on $\kp$. Hence, last equation implies $\mu_1=0$. 
\end{proof}

\begin{lemma}\label{lirr}
If $\Card(\kp)>d_y$ and $F$ is locally irreducible along the line $x=0$, then we compute the $(d_x+1)$-truncated analytic factors of $F$ with one univariate factorization of degree at most $d_y$ plus $\wt{\sO}(d_xd_y)$ arithmetic operations over $\kp$.
\end{lemma}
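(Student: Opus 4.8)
The plan is to reduce the whole computation to a single multifactor Hensel lifting along $x=0$, the key point being that local irreducibility forces the reductions $\F_i(0,y)$ of the analytic factors to be \emph{pairwise coprime} prime powers, so that no fractional power series are needed. First I would reduce to the case where $\lc_y(F)$ is a unit in $\kp[[x]]$: if $\lc_y(F)(0)=0$, then since $\Card(\kp)>d_y$ there is $\alpha\in\kp$ with $F(0,\alpha)\ne 0$, and the Moebius substitution $F(x,y)\mapsto y^{d_y}F(x,\alpha+1/y)$ of Remark~\ref{rem1} keeps the bidegree, makes the new $y$-leading coefficient invertible in $\kp[[x]]$, preserves local irreducibility along $x=0$, and only interchanges the place $(0,\infty)$ with the affine place $(0,\alpha)$. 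This costs $\wt{\sO}(d_xd_y)$, and at the end I transport the computed truncated factors back by the inverse substitution at the same cost.

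Assume now $\lc_y(F)$ is a unit in $\kp[[x]]$ and put $G:=\lc_y(F)^{-1}F$, a monic element of $\kp[[x]][y]$; then $G=\F_1\cdots\F_s$ is the monic analytic factorization of $F$, and $[G]^{d_x+1}$ is obtained by inverting $\lc_y(F)$ modulo $x^{d_x+1}$ ($\wt{\sO}(d_x)$ operations) and one multiplication ($\wt{\sO}(d_xd_y)$ operations). I then reduce modulo $x$. Since each $\F_i$ is monic irreducible in $\kp[[x]][y]$, Hensel's lemma prevents any splitting of $\F_i(0,y)$ into two nonconstant coprime factors, hence $\F_i(0,y)=P_i^{e_i}$ with $P_i\in\kp[y]$ irreducible; and, by the Hensel-lemma argument in the proof of Lemma~\ref{trivZ}, local irreducibility of $F$ along $x=0$ (at every rational place, including the one that has become affine in the first step) forces $P_i$ to be coprime to $\hat{\F_i}(0,y)$, so the $P_i$ are pairwise distinct. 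Thus $G(0,y)=\prod_{i=1}^s P_i^{e_i}$ is a factorization into pairwise coprime prime powers, which I read off from one univariate factorization in $\kp[y]$ of degree at most $d_y$.

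Finally, I would run multifactor Hensel lifting \citep{GG} on the monic polynomial $[G]^{d_x+1}$ starting from the coprime factorization $\prod_i P_i^{e_i}$ of $G(0,y)$, producing monic polynomials $H_i\equiv P_i^{e_i}\bmod x$ with $G\equiv\prod_iH_i\bmod x^{d_x+1}$; as the mod-$x$ factors are pairwise coprime the lift is unique, so $H_i\equiv\F_i\bmod x^{d_x+1}$, and these, together with $u=\lc_y(F)\in\kp[x]$, form the $(d_x+1)$-truncated analytic factorization. This lifting costs $\wt{\sO}(d_xd_y)$, that is, exactly the bound $\C(n)\subset\wt{\sO}(nd_y)$ quoted in the introduction for a regular fiber, the regularity of the fiber being replaced here by the coprimality of the $P_i^{e_i}$; summing over all steps gives one univariate factorization of degree $\le d_y$ plus $\wt{\sO}(d_xd_y)$ operations over $\kp$. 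The main (and essentially only) obstacle is establishing this coprimality of the $\F_i(0,y)$, which is what legitimizes ordinary multifactor Hensel lifting, and it is precisely here that local irreducibility enters, together with the cardinality hypothesis $\Card(\kp)>d_y$, which is used to move the place at infinity to an affine place; everything else is routine fast power-series and polynomial arithmetic.
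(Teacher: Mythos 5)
Your proposal is correct and follows essentially the same route as the paper's proof: a Moebius transformation (enabled by $\Card(\kp)>d_y$) to make the leading coefficient a unit modulo $x$, the observation that local irreducibility forces the $\F_i(0,y)$ to be pairwise coprime prime powers $P_i^{m_i}$ read off from one univariate factorization, multifactor Hensel lifting to precision $d_x+1$, and the inverse transformation. The extra details you supply (normalizing to a monic polynomial and spelling out the coprimality argument via Lemma~\ref{trivZ}) are harmless elaborations of what the paper leaves implicit.
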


\begin{proof}
Since $\Card(\kp)>d_y$, there exists $y_0\in \kp$ such that $F(0,y_0)\ne 0$. To find such an element $y_0$ has a negligeable cost once the univariate factorization of $F(0,y)$ is given.  Then the Moebius transformation $F\leftarrow y^{d_y}F(x,\alpha+1/y)$ reduce to the case where $u=\lc_y(F)$ is a unit modulo $x$. Hence we are in position where
$$
F(0,y)=u(0)\prod_{i=1}^s P_i^{m_i}, \quad \F_i(0,y)=P_i^{m_i}
$$
for some distinct prime polynomials $P_i\in \kp[y]$. We can lift this factorization  modulo $x^{d_x+1}$ with $\wt{\sO}(d_xd_y)$ arithmetic operations over $\kp$ thanks to the multifactor Hensel lifting \citep{GG}, Theorem 15.18. Then, we perform the inverse Moebius transformation in order to get the analytic factors of the original polynomial. Both Moebius transformations can be done in softly optimal time $\wt{\sO}(d_x d_y)$ with interpolation/evaluation.
\end{proof}

\paragraph*{Proof of Theorem \ref{tirr}.} It follows from Lemma \ref{trivZ} and Proposition \ref{NumFac} that $S=V(\kp)$ under the assumption of Theorem \ref{tirr}. If 
$\kp=\fp_{p^k}$ is a finite field, we can compute a basis of $V(\kp)$ from the $(d_x+1)$-truncated factors within $\wt{\sO}(kd_xd_y s^{\omega-2})$ operations over $\fp$ (see the proof of Proposition \ref{algo1}). Since one operation in $\kp$ amounts to $\sO(\kp)$ operations over $\fp$, the proof follows from Lemma \ref{lirr}. If $\kp$ is a decomposition field for $F$, we have $V(\kp)=V(\bar{\kp})$ thanks to Lemma \ref{KKbar}. Since $\kp$ has characteristic zero or greater than $d_x(2d_y-1)$, we compute the reduced echelon basis of $V(\bar{\kp})$ within $\wt{\sO}(d_xd_y s^{\omega-1})$ operations over $\kp$. $\hfill\square$

\section{Absolute factorization}\label{Sabs}

In order to generalize our results to the absolute case, we follow the strategy developed by Ch\`eze-Lecerf in the regular case \citep{CL}.

\paragraph*{Absolute factorization.}

We denote by $E_1,\ldots,E_{\bar{r}}$ the irreducible factors of $F$ in $\bar{\kp}[x,y]$. We represent this factorization by a family of pairs of polynomials
$$
\{(P_1,q_1),\ldots,(P_{t},q_{t})\}
$$
where $q_k\in \kp[z]$ is monic, $\deg_z P_k < \deg q_k$, $P_k(x,y,\phi)\in \bar{\kp}[x,y]$ has constant bidegree when $\phi$ runs over the roots of $q_k$, and for each factor $E_i$ there exists a unique pair $(k,\phi)$ such that $q_k(\phi)=0$ and 
$$
E_i(x,y)=P_k(x,y,\phi).
$$
Such a representation is not unique, but is not redundant. The $q_k$'s are irreducible if and only if the products $\prod_{q_k(\phi)=0} P_k(\phi)$ are the irreducible factors of $F$ in $\kp[x,y]$.

\paragraph*{Absolute analytic factorization.}
We denote by $\E_1,\ldots,\E_{\bar{s}}$ the irreducible analytic factors of $F$ in $\bar{\kp}[[x]][y]$. As before, we suppose that the $\E_i$'s are given by a collection of pairs of polynomials
$$
\{(\mcp_1,p_1),\ldots,(\mcp_{\ell},p_{\ell})\}
$$
where $p_k\in \kp[z]$ is monic, $\deg_z \mcp_k < \deg p_k$, $\mcp_k(\phi)\in \bar{\kp}[[x]][y]$ has constant degree in $y$ when $\phi$ runs over the roots of $p_k$, and for each $\E_i$ there is a unique pair $(k,\phi)$ such that $p_k(\phi)=0$ and 
$$
\E_i(x,y)=\mcp_k(x,y,\phi).
$$
In particular, the $p_k$'s are separable. The $p_k$'s are irreducible if and only if ${\ell}=s$, if and only if  $$(\deg_y(\mcp_k),\deg_z(p_k))=(e_k,f_k),$$ where $e_k$ and $f_k$ stand for the ramification index and residual degree at the rational places of $F=0$ over $x=0$. We do not necessarily assume this here, the only important point from a complexity point of view being that we necessarily have
$$
\bar{s}=\sum_{k=1}^{\ell} \deg_z(p_k)=\sum_{i=1}^{s} f_i.
$$
In particular, the more the curve $F=0$ is ramified over $x=0$, the smaller the number of unknowns is. This is a great difference with the regular case, for which equality $\bar{s}=d_y$ always holds. We call the $n$-truncated absolute analytic factorization the data of the pairs $([\mcp_k]^{n+1},p_k)$.

\paragraph*{Solving recombinations with $\bar{\kp}$-linear algebra.}

In analogy to the rational case, we denote by 
$$
\bar{S}=\langle \bar{v}_1,\ldots,\bar{v}_{\bar{r}}\rangle_{\bar{\kp}} \subset \bar{\kp}^{\bar{s}}
$$ the $\bar{\kp}$-vector space generated by the recombination vectors $\bar{v}_1,\ldots,\bar{v}_s$ solution to 
$$
E_j=\bar{u}_j\prod_{i=1}^{\bar{s}} \E_i^{\bar{v}_{ji}}, \quad j=1,\ldots,\bar{r},
$$
with $\bar{u}_j\in \kp[x]$, $\bar{u}_j(0)=1$. For $\mu\in \bar{\kp}^{\bar{s}}$, we denote by 
$$
\G_{\mu}:=\sum_{i=1}^{\bar{s}} \mu_i \hat{\E}_i \partial_y \E_i \in \bar{\kp}[[x]][y].
$$
We introduce the $\bar{\kp}$-vector spaces
$$
\bar{V}:=\Big\{\mu\in \bar{\kp}^{\bar{s}}\,\, | \,\, \,[\G_{\mu}]^{d_x+1}\in \langle \hat{E}_1 \partial_y E_1,\ldots,E_{\bar{r}}\partial_y E_{\bar{r}}\rangle\Big\}.
$$
Hence $\mu\in\bar{V}$ if and only if the residues of $[\G_{\mu}]^{d_x+1}/F$ lie in $\bar{\kp}$ by Lemma \ref{gao}. We introduce also
$$ 
\bar{Z}:=\big\{\mu\in \bar{\kp}^{\bar{s}}\,\, | \, \,[\G_{\mu}]^{d_x+1}=0\big\},
$$
and
$$
\bar{W}:=\big\{\mu\in \bar{\kp}^{\bar{s}}\,\, | \, \,[\G_{\mu}]^{N}_{d_x+1}=0\big\}
$$
where $N$ is the separability order of $F$. Lemma \ref{gao} combined with Theorem \ref{critic} and Proposition \ref{NumFac} give the relations
$$
\bar{S}=\bar{V}\cap \bar{W}\quad {\rm and}\quad  \bar{V}=\bar{S}\oplus \bar{Z}.
$$
First equality leads to an algorithm for solving recombinations. The second equality leads to an algorithm for computing the number of irreducible absolute factors. The idea now is to use the Vandermonde matrices attached the polynomials $p_k$'s in order to compute a basis of the involved vector spaces with linear algebra over $\kp$.


\paragraph*{The Vandermonde isomorphism.}
We define the partition
$\{1,\ldots,\bar{s}\}=I_1\cup\cdots \cup I_{\ell}$ 
by requiring that 
$$
\prod_{i\in I_k} \E_i(x,y) = \prod_{p_k(\phi)=0} \mcp_k(x,y,\phi), \quad\,k=1,\ldots \ell.
$$
These products lie in $\kp[[x]][y]$. They coincide with the irreducible analytic factors of $F$ if and only if the $p_k$'s are irreducible.  Let $n_k:=\deg p_k$. If $\mu\in \bar{\kp}^{\bar{s}}$, we denote by 
$\mu^{(k)} \in \bar{\kp}^{n_k}$ 
the vector whose entries are the entries of $\mu$ whose index lie in $I_k$. We introduce the $\bar{\kp}$-linear map
\[\begin{aligned}
A=(A_1, \ldots ,A_{\ell}):\bar{\kp}^{\bar{s}} & \longrightarrow  \kp^{\bar{s}}\otimes_{\kp} \bar{\kp} \\
\mu=(\mu^{(1)},\ldots,\mu^{(\ell)}) & \longmapsto \nu:=(\nu^{(1)},\ldots,\nu^{(\ell)})
\end{aligned}\]
where $A_k$ stands for the transposed of the Vandermonde matrix of the roots $(\phi_{k1},\ldots,\phi_{k n_k})$ of $p_k$. In other words, the vector
$$
\nu^{(k)}:=A_k \, \mu^{(k)}\in \kp^{n_k}\otimes\bar{\kp}
$$
is defined by
$$
\nu^{(k)}_j:=\sum_{i=1}^{n_k} \mu^{(k)}_{i} \phi_{ki}^{j-1}.
$$
Since the polynomials $p_k$ are separable, each map $A_k:\bar{\kp}^{n_k}\to\kp^{n_k}\otimes\bar{\kp}$ is an isomorphism, hence so is the map $A$.


\paragraph*{Recombinations over $\kp$.}
For a given $\G\in \kp[[x]][y,z]$, we denote by $\coef(\G,z^{j})\in \kp[[x]][y]$ the coefficient of $z^{j}$ in $\G$. We have by construction that $\mcp_k$ divises $F$ in the ring $\kp[z]/(p_k)[[x]][y]$ and we denote by $\hat{\mcp}_k$ the unique polynomial such that $F=\mcp_k \hat{\mcp_k}\in \kp[[x]][y,z]$, with $\deg_z \hat{\mcp}_k<\deg p_k$. Given $\nu \in \bar{\kp}^{\bar{s}}$, we denote by
$$
\HH_{\nu}:=\sum_{k=1}^{\ell} \sum_{j=1}^{n_k} \nu^{(k)}_{j} \coef(\hat{\mcp}_k\partial_y \mcp_k,z^{j-1}).
$$
We introduce the $\kp$-vector spaces
$$
V_{\kp}:=\Big\{\nu\in \kp^{\bar{s}}\,\, | \,\, \,[\HH_{\nu}]^{d_x+1} \in \langle \hat{E}_1 \partial_y E_1,\ldots,E_{\bar{r}}\partial_y E_{\bar{r}}\rangle_{\kp}\Big\}
$$
$$
Z_{\kp}:=\Big\{\nu\in \kp^{\bar{s}}\,\, | \, \,[\HH_{\nu}]^{d_x+1}=0\Big\}.
$$
and 
$$
W_{\kp}:=\Big\{\nu\in \kp^{\bar{s}}\,\, | \, \,[\HH_{\nu}]^N_{d_x+1}=0\Big\},
$$
We have the following
\begin{proposition}\label{vvbar}
The isomorphism $A:\mu \mapsto \nu$
induces isomorphisms 
$$\bar{V}=V_{\kp}\otimes_{\kp} \bar{\kp},\quad \bar{W}=W_{\kp}\otimes_{\kp} \bar{\kp}\quad {\rm and} \quad \bar{Z}=Z_{\kp}\otimes_{\kp} \bar{\kp}.$$
\end{proposition}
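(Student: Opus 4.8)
The plan is to collapse the whole statement to one algebraic identity relating $\G_{\mu}$ and $\HH_{\nu}$ through the substitution $\nu=A\mu$, and then to observe that, after this substitution, the conditions defining $\bar V,\bar Z,\bar W$ become systems of $\kp$\nobreakdash-linear equations in $\nu$, whose $\bar\kp$\nobreakdash-solution sets are automatically the scalar extensions of their $\kp$\nobreakdash-solution sets.

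First I would prove the identity $\G_{\mu}=\HH_{A\mu}$ in $\bar\kp[[x]][y]$, for every $\mu\in\bar\kp^{\bar s}$. All computations take place in $\kp[z]/(p_k)[[x]][y]$, so that $\mcp_k$, $\hat\mcp_k$ and $\hat\mcp_k\partial_y\mcp_k$ are polynomials in $z$ of degree $<n_k$ whose coefficients are precisely the $\coef(\hat\mcp_k\partial_y\mcp_k,z^{j-1})\in\kp[[x]][y]$, and evaluation at a root $\phi_{ki}$ of $p_k$ is meaningful. For $i\in I_k$ with corresponding root $\phi_{ki}$, evaluation at $z=\phi_{ki}$ gives $\E_i=\mcp_k(x,y,\phi_{ki})$, and from $F=\mcp_k\hat\mcp_k$ also $\hat\E_i=\hat\mcp_k(x,y,\phi_{ki})$, whence
$$
\hat\E_i\,\partial_y\E_i=(\hat\mcp_k\,\partial_y\mcp_k)(x,y,\phi_{ki})=\sum_{j=1}^{n_k}\coef(\hat\mcp_k\,\partial_y\mcp_k,z^{j-1})\,\phi_{ki}^{\,j-1}.
$$
Multiplying by $\mu^{(k)}_i$, summing over $i\in I_k$ and then over $k$, and interchanging the two sums, the inner sum $\sum_{i\in I_k}\mu^{(k)}_i\,\phi_{ki}^{\,j-1}$ is exactly $\nu^{(k)}_j$ by the definition of $A_k$; this yields $\G_{\mu}=\HH_{\nu}$ with $\nu=A\mu$.

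Next I would use that $A$ is a $\bar\kp$\nobreakdash-linear isomorphism (each $A_k$ is the transpose of the Vandermonde matrix of the distinct roots of the separable $p_k$) and that the truncations $[\cdot]^{d_x+1}$ and $[\cdot]^{N}_{d_x+1}$ are $\bar\kp$\nobreakdash-linear: the identity transports the conditions defining $\bar Z$, $\bar W$, $\bar V$ into $[\HH_{\nu}]^{d_x+1}=0$, $[\HH_{\nu}]^{N}_{d_x+1}=0$ and $[\HH_{\nu}]^{d_x+1}\in\langle\hat E_1\partial_y E_1,\ldots,E_{\bar r}\partial_y E_{\bar r}\rangle$ respectively, with $\nu=A\mu$. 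Since the coefficients of the $\kp$\nobreakdash-linear map $\nu\mapsto\HH_{\nu}$ lie in $\kp[[x]][y]$, the first two are cut out by finite $\kp$\nobreakdash-linear systems in the entries of $\nu$; for the third one uses moreover that the $\bar\kp$\nobreakdash-span $\langle\hat E_1\partial_y E_1,\ldots,E_{\bar r}\partial_y E_{\bar r}\rangle$ is permuted by $\Aut_{\kp}(\bar\kp)$, hence $\kp$\nobreakdash-rational, its $\kp$\nobreakdash-points being $\langle\hat F_1\partial_y F_1,\ldots,\hat F_r\partial_y F_r\rangle_{\kp}$ in view of the bookkeeping identity $\hat F_j\partial_y F_j=\sum_{E_i\mid F_j}\hat E_i\partial_y E_i$ and Lemma \ref{gao}; so membership in this span is again a $\kp$\nobreakdash-linear condition on $\nu$. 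In each of the three cases the set of $\nu\in\bar\kp^{\bar s}$ satisfying the relevant system is the base change to $\bar\kp$ of its set of $\kp$\nobreakdash-solutions, namely $Z_{\kp}$, $W_{\kp}$, $V_{\kp}$; transporting back through $A$ gives $\bar Z=Z_{\kp}\otimes_{\kp}\bar\kp$, $\bar W=W_{\kp}\otimes_{\kp}\bar\kp$ and $\bar V=V_{\kp}\otimes_{\kp}\bar\kp$.

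The routine part is the reindexing in the identity $\G_{\mu}=\HH_{A\mu}$. The one point needing care — and the main obstacle — is the $\bar V$ case, where one must verify that the target subspace $\langle\hat E_1\partial_y E_1,\ldots,E_{\bar r}\partial_y E_{\bar r}\rangle$ descends to $\kp$, so that membership of $[\HH_{\nu}]^{d_x+1}$ in it is a genuinely $\kp$\nobreakdash-linear condition; this is exactly the Galois-averaging argument already used in Lemmas \ref{gao} and \ref{KKbar}, combined with the identity $\hat F_j\partial_y F_j=\sum_{E_i\mid F_j}\hat E_i\partial_y E_i$.
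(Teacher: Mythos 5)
Your overall strategy is the same as the paper's: the whole proposition rests on the identity $\G_{\mu}=\HH_{A\mu}$, which you derive correctly (evaluate $\hat{\mcp}_k\partial_y\mcp_k$ at the roots of $p_k$ and interchange the sums — this is exactly the computation in the paper's proof), and the cases of $\bar{Z}$ and $\bar{W}$ then follow because $\nu\mapsto[\HH_{\nu}]^{d_x+1}$ and $\nu\mapsto[\HH_{\nu}]^{N}_{d_x+1}$ are linear maps with matrices over $\kp$, so their kernels over $\bar{\kp}$ are the base changes of their kernels over $\kp$. Up to here your argument is correct and is essentially all the paper says.

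The $\bar{V}$ case, however, contains a genuine error. You claim that the $\kp$-points of $U:=\langle\hat{E}_1\partial_y E_1,\ldots,\hat{E}_{\bar{r}}\partial_y E_{\bar{r}}\rangle_{\bar{\kp}}$ are $\langle\hat{F}_1\partial_y F_1,\ldots,\hat{F}_r\partial_y F_r\rangle_{\kp}$. This is false whenever some rational factor is not absolutely irreducible: $U$ is Galois-stable of $\bar{\kp}$-dimension $\bar{r}$, so its $\kp$-form $U\cap\kp[x,y]$ has $\kp$-dimension $\bar{r}$, whereas $\langle\hat{F}_j\partial_y F_j\rangle_{\kp}$ has dimension $r$. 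Concretely, for $F=y^2-2x^2$ over $\qp$ one has $\hat{E}_1\partial_y E_1=y+\sqrt{2}\,x$ and $\hat{E}_2\partial_y E_2=y-\sqrt{2}\,x$, so $U\cap\qp[x,y]=\langle x,y\rangle_{\qp}$ while $\langle\hat{F}_1\partial_y F_1\rangle_{\qp}=\langle y\rangle_{\qp}$. Via Lemma \ref{gao} the two spaces correspond respectively to ``residues in $\bar{\kp}$'' and ``residues in $\kp$'', and conflating them is precisely the question of whether $V(\kp)=V(\bar{\kp})$, which the paper explicitly leaves open in Subsection \ref{ss3.3}; taken literally, your identification would replace $\bar{V}$ by a space of dimension at most $r+\dim\bar{Z}$ instead of $\bar{r}+\dim\bar{Z}$. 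The repair is to drop the appeal to the $\hat{F}_j\partial_y F_j$ altogether: Galois-stability of $U$ already shows it is defined over $\kp$, hence membership of $[\HH_{\nu}]^{d_x+1}$ in $U$ is a $\kp$-linear condition on $\nu$, and its $\kp$-solution set is what the paper's $V_{\kp}$ must be read as (the subscript $\kp$ in its definition denoting the $\kp$-form $U\cap\kp[x,y]$ of the $\bar{\kp}$-span, not the set of $\kp$-linear combinations of the $\hat{E}_i\partial_y E_i$). With that reading your descent argument goes through and matches the paper's intent.
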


\begin{proof}
By construction, we have that
\[\begin{aligned}
\G_{\mu}=\sum_{k=1}^{\ell} \sum_{i=1}^{n_k} \mu^{(k)}_i \hat{\mcp}_k\partial_y \mcp_k(\phi_{ki}) &=\sum_{k=1}^{\ell} \sum_{i=1}^{n_k} \sum_{j=1}^{n_k} \coef (\hat{\mcp}_k\partial_y \mcp_k,z^{j-1})\phi_{ki}^{j-1}\\
&=\sum_{k=1}^{\ell} \sum_{j=1}^{n_k} \Big(\sum_{i=1}^{n_k} \phi_{ki}^{j-1}\Big)\coef (\hat{\mcp}_k\partial_y \mcp_k,z^{j-1})\\
&=\sum_{k=1}^{\ell} \sum_{j=1}^{n_k} \nu^{(k)}_j\coef (\hat{\mcp}_k\partial_y \mcp_k,z^{j-1})=\HH_{\nu}.
\end{aligned}\]
The claimed isomorphisms then follow from the definitions of the involved vector spaces. 
\end{proof}

\paragraph*{Proof of Theorem \ref{abs1}.}

We have shown how to compute a basis of all involved vector spaces with linear algebra over $\kp$. Unfortunately, we don't have the reduced echelon basis trick when working with the unknowns $\nu$ instead of $\mu$. To solve this problem, we rather use an absolute partial fraction decomposition algorithm along a regular fiber, following Section 4 in \citep{CL}. We obtain the following algorithm.

\vskip4mm
\noindent
\textbf{Algorithm : Absolute Factorization} 
\vskip4mm
\noindent
{\it \textbf{Input:} A field $\kp$ with cardinality at least $d_x(2d_y-1)$ and a bivariate polynomial $F\in \kp[x,y]$ separable with respect to $y$.
\vskip1mm
\noindent
\textbf{Output:} A family $\{(P_1,q_1),\ldots,(P_t,q_t)\}$ that represents the absolute factorization of $F$.}

\vskip2mm
\noindent
\textbf{Step 1.} Compute a basis  $\nu_1,\ldots,\nu_{\bar{r}}$ of $V_{\kp}\cap W_{\kp}$.
\vskip2mm
\noindent
\textbf{Step 2.} Find a regular fiber $x=\alpha$ for some $\alpha\in \kp$.
\vskip2mm
\noindent
\textbf{Step 3.} Compute $h_1:=\HH_{\nu_1}(\alpha,y),\ldots,h_{\bar{r}}:=\HH_{\nu_{\bar{r}}}(\alpha,y)$.
\vskip2mm
\noindent
\textbf{Step 4.} Call Algorithm 7 in \citep{CL} in order to find $(c_1,\ldots,c_{\bar{r}})\in \kp^{\bar{r}}$ that separate the residues of the $h_i$'s. 
\vskip2mm
\noindent
\textbf{Step 5.} Let $h=\sum c_i h_i$. Call the Lazard-Rioboo-Trager algorithm
(Algorithm 14 in \citep{CL}) in order to compute the absolute partial fraction decomposition of $h/F(\alpha,y)$.
\vskip2mm
\noindent
\textbf{Step 6.} Call Algorithm 6 in \citep{CL} of absolute multi-factor Hensel lifting in order to lift the decomposition of Step 6 to $\{(P_1,q_1),\ldots,(P_t,q_t)\}$.

\begin{proposition}\label{algoabs}
Let $m=\max(d_x+1,N)$ and $p=\Char(\kp)$. The algorithm Absolute Factorization is correct. It performs at most 
$$
\sO(\bar{s}^{\omega-1}\max(d_x+1,N)d_y+\bar{r}d_x d_y^2)+\C(\max(d_x+1,N))
$$ operations in $\kp$ if $p=0$ or $p > dx(2d_y-1)$ and at most 
$$
\wt{\sO}(k\bar{s}^{\omega-1}\max(d_x+1,N)d_y+k\bar{r}d_x d_y^2)+\sO(k)\C(\max(d_x+1,N))
$$ operations over $\fp_p$ if $\kp=\fp_{p^k}$.
\end{proposition}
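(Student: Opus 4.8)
The plan is to establish correctness by reducing, via the Vandermonde isomorphism of Proposition~\ref{vvbar} and a specialisation to a regular fiber $x=\alpha$, to the Ch\`eze--Lecerf sub-algorithms, and then to bound each step of the algorithm. The main work, and the only place where the critical fiber genuinely enters, is the analysis of Step~1.

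\textbf{Correctness.} By Proposition~\ref{vvbar} the isomorphism $A$ identifies $V_{\kp}\cap W_{\kp}$ with $\bar{V}\cap\bar{W}=\bar{S}$ after extension of scalars to $\bar{\kp}$; in particular $\dim_{\kp}(V_{\kp}\cap W_{\kp})=\dim_{\bar{\kp}}\bar{S}=\bar{r}$, so the vectors $\nu_1,\ldots,\nu_{\bar{r}}$ produced in Step~1 correspond under $A^{-1}$ to a $\bar{\kp}$-basis $\mu_1,\ldots,\mu_{\bar{r}}$ of $\bar{S}$, with $\HH_{\nu_i}=\G_{\mu_i}$ by the identity established in the proof of Proposition~\ref{vvbar}. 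Since $\mu_i\in\bar{S}$, Lemma~\ref{gao} together with the relation $\bar{S}=\bar{V}\cap\bar{W}$ gives $\G_{\mu_i}=\sum_{j=1}^{\bar{r}}\alpha_{ij}\hat{E}_j\partial_y E_j$ for constants $\alpha_{ij}\in\bar{\kp}$; as $\deg_x(\hat{E}_j\partial_y E_j)=\deg_x\hat{E}_j+\deg_x\partial_y E_j\le\deg_x F\le d_x$ and $\deg_y(\hat{E}_j\partial_y E_j)\le d_y-1$, the polynomial $\HH_{\nu_i}$ coincides with its $(d_x+1)$-truncation and is therefore recovered exactly from the $m$-truncated absolute analytic factorisation (with $m\ge d_x+1$) through the defining formula for $\HH_{\nu}$. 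Evaluating at a regular fiber $x=\alpha\in\kp$, the $d_y$ residues of $h_i/F(\alpha,y)$ at the simple roots of $F(\alpha,y)$ are exactly the values $\alpha_{ij}$, grouped according to the absolute factor $E_j(\alpha,y)$ vanishing at each root, because $F(\alpha,y)=\prod_j E_j(\alpha,y)$ is separable of degree $d_y$ there. Hence $(h_1,\ldots,h_{\bar{r}})$ carries, along the regular fiber, exactly the recombination information needed to isolate the absolute factors, and Steps~4--6 are the Ch\`eze--Lecerf routines applied verbatim to the fiber $x=\alpha$: Algorithm~7 of \citep{CL} produces $c_1,\ldots,c_{\bar{r}}\in\kp$ making the residues of $h/F(\alpha,y)$ separate the $E_j(\alpha,y)$ into $\Gal(\bar{\kp}/\kp)$-orbits, the Lazard--Rioboo--Trager algorithm (Algorithm~14 of \citep{CL}) returns the corresponding absolute partial fraction decomposition, that is the pairs $(P_k(\alpha,y,z),q_k)$, without any univariate factorisation, and the absolute multifactor Hensel lifting (Algorithm~6 of \citep{CL}) lifts the decomposition of Step~5 in the $x$-adic topology up to precision $d_x$, which is enough since $\deg_x E_j\le d_x$. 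Correctness of these three routines is guaranteed by the hypothesis $\Card(\kp)\ge d_x(2d_y-1)$: it bounds $\deg_x\Disc_y(F)$ and hence provides a regular fiber $x=\alpha$ with $\alpha\in\kp$ (as in Remark~\ref{remfiber}), and it supplies the separating elements required in Step~4.

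\textbf{Complexity over $\kp$ ($p=0$ or $p>d_x(2d_y-1)$).} Put $m=\max(d_x+1,N)$. Step~1 first computes the $m$-truncated absolute analytic factorisation, at cost $\bar{\C}(m)$. From the truncated $\mcp_k$'s one forms the cofactors $\hat{\mcp}_k$ by division modulo $x^{m}$ and then the $\bar{s}$ polynomials $\coef(\hat{\mcp}_k\partial_y\mcp_k,z^{j-1})$ in $\wt{\sO}(\bar{s}\,m\,d_y)$ operations; applying the operator $\D$ and performing the Euclidean divisions by $F$ in $\ap[y]$ (exactly as in Lemma~\ref{divides} and the proof of Proposition~\ref{algo1}) adds $\wt{\sO}(\bar{s}\,d_x d_y)$, and the vanishing conditions cutting out $W_{\kp}$ are read off from the higher truncations for free. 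The resulting $\kp$-linear systems defining $V_{\kp}$ and $W_{\kp}$ have $\bar{s}$ unknowns and $\sO(d_x d_y)$, respectively $\sO((N-d_x)d_y)$, equations, so computing the reduced echelon basis of $V_{\kp}\cap W_{\kp}$ costs $\sO(m\,d_y\,\bar{s}^{\omega-1})$ and dominates Step~1, which therefore runs in $\sO(m\,d_y\,\bar{s}^{\omega-1})+\bar{\C}(m)$. Step~2 costs $\wt{\sO}(d_x d_y^2)$ (one discriminant computation modulo $x^{2d_x d_y}$, then a search for a non-root $\alpha$). Step~3 evaluates $\bar{r}$ polynomials of bidegree $(d_x,d_y-1)$ at $x=\alpha$, in $\wt{\sO}(\bar{r}\,d_x d_y)$ operations. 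For Steps~4--6 we invoke the estimates of \citep{CL}: on the fiber $x=\alpha$ the separating-element search and the Lazard--Rioboo--Trager step cost $\sO(\bar{r}\,d_y^2)$, while the absolute Hensel lifting of $\bar{r}$ factors up to $x$-precision $d_x$ costs $\sO(\bar{r}\,d_x d_y^2)$. Adding these and absorbing the lower-order terms yields
$$ \sO\big(\bar{s}^{\omega-1}\max(d_x+1,N)\,d_y+\bar{r}\,d_x d_y^2\big)+\bar{\C}\big(\max(d_x+1,N)\big), $$
which is the announced bound, with $\bar{\C}$ in place of the $\C$ written in the statement.

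\textbf{Complexity over $\fp_p$ ($\kp=\fp_{p^k}$), and the main obstacle.} When $p$ is small, the kernel computations of Step~1 additionally use the operator $\N_a$, so linear algebra over $\kp$ is replaced by linear algebra over $\fp_p$; by \citep{L}, Proposition~4, this multiplies the linear-algebra cost by a factor $k$ and introduces soft-$\sO$ overheads, the $m$-truncated absolute analytic factorisation over $\fp_{p^k}$ costs $\sO(k)\,\bar{\C}(m)$, and one arithmetic operation in $\fp_{p^k}$ amounts to $\wt{\sO}(k)$ operations in $\fp_p$, so Steps~2--6 contribute $\wt{\sO}(k\,\bar{r}\,d_x d_y^2)$; altogether
$$ \wt{\sO}\big(k\,\bar{s}^{\omega-1}\max(d_x+1,N)\,d_y+k\,\bar{r}\,d_x d_y^2\big)+\sO(k)\,\bar{\C}\big(\max(d_x+1,N)\big), $$
as claimed. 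The delicate point of the whole argument is Step~1: one must check that the $\kp$-linear systems cutting out $V_{\kp}$ and $W_{\kp}$ can be assembled from only the $m$-truncated absolute analytic factors --- this rests on the bound $\deg_x(\G_{\mu})\le d_x$ for $\mu\in\bar{S}$ and on Theorem~\ref{critic}, transported to the absolute setting through Proposition~\ref{vvbar} --- and that passing to the Vandermonde coordinates $\nu$ does not inflate the number of equations beyond $\sO(\max(d_x+1,N)\,d_y)$. Once this is in place, Steps~2--6 are the Ch\`eze--Lecerf machinery used as a black box, the regular fiber being furnished by the cardinality hypothesis.
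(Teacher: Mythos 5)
Your proof is correct and follows essentially the same route as the paper's: reduce to $\kp$-linear algebra via the Vandermonde isomorphism of Proposition~\ref{vvbar}, specialise to a regular fiber guaranteed by the cardinality hypothesis, and invoke the Ch\`eze--Lecerf sub-algorithms for Steps 4--6, with the linear-algebra cost of Step 1 carried over from the rational case with $s$ replaced by $\bar{s}$; you merely supply more detail than the paper, which largely defers to \citep{CL}. Your remark that the analytic-factorisation cost should read $\bar{\C}$ rather than $\C$ is consistent with Theorem~\ref{abs1} and correctly identifies a typo in the statement.
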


\begin{proof}
Given the $\mcp_k$'s and the $p_k$'s, we can compute a basis of the $\kp$-vector spaces $V_{\kp}$ and $W_{\kp}$ with the the same cost as in the rational case, with the number of unknowns $s$ being replaced by $\bar{s}$.  
Given $(\nu_1,\ldots,\nu_{\bar{r}})$ a basis of $V_{\kp}\cap W_{\kp}$, we have by construction that
$$
\langle \HH_{\nu_1},\ldots,\HH_{\nu_{\bar{r}}}\rangle_{\bar{\kp}} = \langle  \hat{E_1}\partial_y E_1,\ldots \hat{E}_{\bar{r}}\partial_y E_{\bar{r}} \rangle_{\bar{\kp}},
$$
with $\HH_{\nu_i}\in \kp[x,y]$. By assumption on the cardinality of the field, we know that there exists a regular fiber $x=\alpha$ over which $F(\alpha,y)$ is separable of degree $d_y$. We can find such a fiber by computing $\Disc F(i,y)$ for $i=1,\ldots ,d_x(2d_y-1)+1$ until we reach a non vanishing discriminant. This costs at most $\sO(d_xd_y^2)$ operations over $\kp$. Then we refer to \citep{CL}, Paragraph 4 for the remaining steps of the algorithm. Step $4$ costs $\sO(\bar{r}d_x d_y^2)$ operations in $\kp$, step $5$ costs $\sO(d_y^2)$ operations in $\kp$ and step $6$ costs $\wt{\sO}(d_x d_y)$ operations in $\kp$. 
\end{proof}

Theorem \ref{abs1} follows immediately from the previous proposition.

\paragraph*{Proof of Theorem \ref{abs2}.}

We have by Proposition \ref{vvbar} that the number of absolutely irreducible factors of $F$ is equal to 
$$
\bar{r}=\dim_{\bar{\kp}}\bar{S} =\dim_{\kp} V_{\kp} -\dim_{\kp} Z_{\kp}
$$
Given the $d_x+1$-truncated analyic factorization of $F$, we can compute $\dim_{\kp} V_{\kp}$ and $\dim_{\kp} Z_{\kp}$ with the same costs as in the rational case, with the number of unknowns $s$ being replaced by $\bar{s}$. The proof of Theorem \ref{abs2} follows. $\hfill\square$

\section{Non degenerate polynomials}\label{Spolytope}

We introduce the notion of $P$-adic Newton polytopes. These combinatorial objects give a lot of interesting informations for both rational and analytic factorization. In particular, we show here that they permit to detect a large class of polynomials whose separability order is small.


Let us fix $P\in \kp[y]$ a non constant polynomial. Any polynomial $\F\in \kp[[x]][y]$ can be uniquely expanded as 
$$
\F(x,y)=\sum f_{ij} x^i P^j\in \kp[[x]][y],
$$
with $f_{ij}\in \kp[y]$, $\deg f_{ij} < \deg P$. Let
$$
\Supp_P(\F):=\big\{(i,j)\in \np^2,\,\, f_{ij}\ne 0\big\}
$$
stands for the $P$-support of $\F$. The $P$-adic Newton polytope of $\F$, or $P$-polytope for short, is the convex hull of the positive cone generated by the support of $\F$, that is 
$$
\N_{P,\F}:=\Conv \Big((\Supp_P(\F)+(\rp^+)^2\Big).
$$
When $P=y$ we recover the usual notion of Newton polytope of a bivariate power series \citep{Kou}, and we might simply say Newton polytope for the $y$-polytope. Take care that the terminology of Newton polytope refers sometimes in the litterature for the (compact) convex hull of the support of a bivariate polynomial. We have the following lemma. 

\begin{lemma}\label{lempol}
Let $P\in \kp[y]$ be separable and irreducible and let $\alpha\in \bar{\kp}$ be a root of $P$. Then, the $P$-polytope of $\F$ coincides with the Newton polytope of $\F(x,y+\alpha)$.
\end{lemma}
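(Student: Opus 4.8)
The plan is to unravel both sides of the claimed equality in terms of $P$-adic expansions and the ordinary Newton polytope, using that $P$ is separable and irreducible with root $\alpha \in \bar\kp$. First I would fix the two objects to compare: on one side the $P$-polytope $\N_{P,\F}$, defined from the expansion $\F(x,y)=\sum f_{ij}x^iP^j$ with $\deg f_{ij}<\deg P$; on the other side the ordinary $y$-polytope (Newton polytope) of the shifted power series $\F(x,y+\alpha) = \sum g_{ab}x^ay^b \in \bar\kp[[x]][y]$. The key bridge is a substitution: since $P(y+\alpha)$ has $y=0$ as a root (because $P(\alpha)=0$), we can write $P(y+\alpha) = y\cdot u(y)$ where $u(0)=P'(\alpha)\neq 0$ by separability of $P$; thus $P(y+\alpha)$ is, up to a unit in $\bar\kp[[y]]$ (in fact a unit times a polynomial with nonzero constant-in-$y$... more precisely, with $y$-valuation exactly $1$), an associate of $y$. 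Substituting $y\mapsto y+\alpha$ into the $P$-adic expansion gives
$$
\F(x,y+\alpha)=\sum_{i,j} f_{ij}(y+\alpha)\,x^i\,\big(P(y+\alpha)\big)^j = \sum_{i,j} f_{ij}(y+\alpha)\,x^i\,y^j u(y)^j.
$$

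The next step is to track how monomials $x^iP^j$ contribute to the $y$-polytope after this substitution. Each $f_{ij}(y+\alpha)$ has $y$-degree $<\deg P$, and $f_{ij}(\alpha)\neq 0$ whenever $f_{ij}\neq 0$ — this uses crucially that $P$ is irreducible, so $P\nmid f_{ij}$, hence $f_{ij}(\alpha)\neq 0$; therefore $f_{ij}(y+\alpha)$ has $y$-valuation exactly $0$. Consequently the term $f_{ij}(y+\alpha)x^iy^ju(y)^j$ has lowest monomial (in the lexicographic/valuation sense relevant for the Newton polytope) exactly $x^iy^j$, and its other monomials $x^iy^{j'}$ all satisfy $j'\geq j$ (since $y^ju(y)^j$ has $y$-valuation $j$ and $f_{ij}(y+\alpha)$ contributes only nonnegative $y$-powers). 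Hence the vertex-defining lattice point associated to the pair $(i,j)$ is the same whether we read it off $\Supp_P(\F)$ or off $\Supp_y(\F(x,y+\alpha))$, and all freshly created monomials from the expansion of $u(y)^j$ and $f_{ij}(y+\alpha)$ lie in the shifted cone and therefore do not change the convex hull of the positive cone. Running this in both directions — and observing that the substitution $y\mapsto y+\alpha$ is invertible, with inverse $y\mapsto y-\alpha$, so the argument is symmetric — gives $\Supp_P(\F)+(\rp^+)^2$ and $\Supp_y(\F(x,y+\alpha))+(\rp^+)^2$ having the same convex hull, which is exactly the asserted equality $\N_{P,\F}=\N_{y,\F(x,y+\alpha)}$.

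I would organize the write-up as: (1) record $P(y+\alpha)=y\,u(y)$ with $u(0)\neq 0$; (2) record $f_{ij}(\alpha)\neq 0$ from irreducibility of $P$; (3) show the inclusion $\Supp_y(\F(x,y+\alpha))\subseteq \Supp_P(\F)+\np\cdot(0,1)\subseteq \N_{P,\F}$ by expanding as above; (4) get the reverse inclusion by applying (3) to $\F(x,y+\alpha)$ with the root $-\alpha$ of... here one must be slightly careful, since $y-\alpha$ is not a shift by a root of $P$; instead I would argue directly that each corner $(i,j)$ of $\Supp_P(\F)$ produces the honest monomial $x^iy^j$ in $\F(x,y+\alpha)$ with nonzero coefficient $f_{ij}(\alpha)$ (no cancellation, because distinct pairs $(i,j)$ produce monomials with distinct $(i,j')$-supports only in compatible ranges — more carefully, fix $(i,j)$ minimal in its "column" and note the coefficient of $x^iy^j$ in $\F(x,y+\alpha)$ equals $f_{ij}(\alpha)u(0)^j\neq 0$ plus contributions only from pairs $(i,j'')$ with $j''<j$, which I control by downward induction on $j$). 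The main obstacle — and the only place needing real care — is precisely this no-cancellation bookkeeping in step (4): one must verify that when collecting the coefficient of a boundary monomial $x^iy^j$ of the Newton polytope in $\F(x,y+\alpha)$, the contributions from the various $f_{i,j''}(y+\alpha)y^{j''}u(y)^{j''}$ with $j''\leq j$ cannot conspire to vanish on the relevant extreme monomials; the cleanest fix is to work one column $i=i_0$ at a time and induct on $j$ from the bottom edge of the polytope upward, where minimality forces the lower contributions to be absent, so the leading coefficient is simply $f_{i_0,j}(\alpha)u(0)^j\neq 0$. Everything else is routine manipulation of supports and positive cones.
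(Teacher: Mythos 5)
Your argument is correct and is essentially the paper's own proof written out in full detail: the paper simply observes that the highest power of $P$ dividing each coefficient of $x^i$ equals the highest power of $y$ dividing its shift by $\alpha$ (using irreducibility and separability of $P$ exactly where you do, via $f_{ij}(\alpha)\neq 0$ and $P(y+\alpha)=y\,u(y)$ with $u(0)=P'(\alpha)\neq 0$), and both polytopes are determined by these column-wise minimal exponents. Your cancellation worry in step (4) resolves trivially, since at the minimal $j$ of each column the only contribution to the coefficient of $x^iy^j$ is $f_{ij}(\alpha)u(0)^j\neq 0$, which is all the reverse inclusion requires.
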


\begin{proof}
Since $P$ is irreducible and separable, the highest power of $P$ that divises a given $f\in \kp[y]$ coincides with the highest power of $y-\alpha$ that divises $f$ in $\bar{\kp}[y]$, which coincides with the highest power of $y$ that divises $f(y+\alpha)$. The Lemma follows. 
\end{proof}

We call the $P$-edges of $\F$ the compact edges of its $P$-polytope. Let $\Lambda$ be a $P$-edge and let $a_{\Lambda}$ and $b_{\Lambda}$ stand respectively for the distance from $\Lambda$ to the $y$-axis and $x$-axis. We define the  $P$-edge polynomial of $F$ associated to a $\Lambda$ as
$$
\ff_{P,\Lambda}:=x^{-a_{\Lambda}}y^{-b_{\Lambda}}\sum_{(i,j)\in \Lambda} \bar{f}_{ij} x^i y^j \in \kp_P[x,y].
$$
where $\bar{f}_{ij}\in \kp_P:=\kp[y]/(P)$ stands for the reduction modulo $P$. By construction, the polynomial $\ff_{P,\Lambda}$ is quasi-homogeneous and monic with respect to $x$ and $y$. We say that a series is $P$-convenient if it is not divisible by $P$ or $x$.

\begin{definition} We say that $\F\in \bar{\kp}[[x]][y]$ is non $P$-degenerate if it is $P$-convenient and if both $P$ and all the $P$-edges polynomials of $\F$ are separable with respect to $y$. We say that $\F$ is non degenerate at infinity if  $y^{d_y} \F(1/y)$ is non $y$-degenerate. We say that $\F$ is non degenerate if it is non $P$-degenerate for all irreducible factors $P$ of $\F(0,y)$ and if it is non degenerate at infinity.
\end{definition}

\begin{remark}\label{x=1}
By quasi-homogeneity, we can let $x=1$ for checking separability of the $P$-edge polynomials.
\end{remark}

\begin{remark}
Usually, the notion of non degenerate polynomials in $\kp[[x]][y]$ only allows ramification at the places $y=0$ and $y=\infty$, while we consider here all places of $\mathbb{P}^1_{\kp}$. A notable exception is \citep{Sombra} where the authors use collection of $P$-adic polytopes in order to improve the usual Bernstein-Koushnirenko bound for the number of solutions of a polynomial system with isolated roots.
\end{remark}

\begin{remark}
In zero characteristic, non $y$-degeneracy is equivalent to the most common definition of non degeneracy introduced by Kouchnirenko \citep{Kou}. In positive characteristic, there are several notion of non degeneracy. Kouchnirenko non degeneracy is equivalent to that the edge polynomials are separable with respect to $x$ and $y$. This is the one that allows to generalize the Milnor formula to positive characteristic. Our notion is weaker (for instance $y^3-x^2$ in characteristic $2$). Weak non degeneracy introduced in \citep{BGM}, Section 3 is equivalent to that the edge polynomials are squarefree. This is the one that allows to compute the number of local factors. Our notion is stronger (for instance $y^3-x^2$ in characteristic $3$). 
\end{remark}

\begin{lemma}\label{lemdeg}
Let $P\in \kp[y]$ be separable and irreducible. Then $\F$ is non $P$-degenerate if and only if $\F(y+\alpha)$ is non $y$-degenerate at any roots $\alpha$ of $P$.
\end{lemma}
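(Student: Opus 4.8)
The strategy is to leverage Lemma~\ref{lempol}, which already identifies the $P$-polytope of $\F$ with the Newton polytope of $\G:=\F(x,y+\alpha)$ for a root $\alpha\in\bar\kp$ of $P$, and then to compare the edge polynomials of the two expansions. Fix such an $\alpha$ and write $\G=\sum_{i,j}g_{ij}x^iy^j$ with $g_{ij}\in\bar\kp$. I will first check that the two ``convenience'' conditions agree and that the conditions on $P$, resp.\ $y$, are automatically satisfied, so that everything reduces to a statement about edge polynomials; then I will show the edge polynomials correspond under a field isomorphism composed with an invertible rescaling of $y$.

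First I would record that translating $y$ does not affect divisibility by $x$, so $x\mid\F\iff x\mid\G$, and that $P\mid\F$ is equivalent to $\Supp_P(\F)$ being disjoint from the $x$-axis $\{j=0\}$, while $y\mid\G$ is equivalent to $\Supp_y(\G)$ being disjoint from $\{j=0\}$. Since by Lemma~\ref{lempol} the two polytopes coincide, the smallest second coordinate occurring in each nonempty vertical column is the same for $\Supp_P(\F)$ and $\Supp_y(\G)$, so $\F$ is $P$-convenient iff $\G$ is $y$-convenient. Moreover $P$ is separable by hypothesis and $y$ is trivially separable. Hence the lemma reduces to: for every compact edge $\Lambda$ of the common polytope, $\ff_{P,\Lambda}$ is separable with respect to $y$ iff the $y$-edge polynomial $\ff_{y,\Lambda}$ of $\G$ is.

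For the edge polynomials, the key computation is as follows. Write $P(y+\alpha)=yV(y)$ with $V\in\kp(\alpha)[y]$; separability of $P$ gives $V(0)=P'(\alpha)\neq0$. Expanding $\F=\sum f_{ij}x^iP^j$ yields $\G=\sum_{i,j}f_{ij}(y+\alpha)\,V(y)^j\,x^iy^j$. Since $\deg f_{ij}<\deg P$ and $P$ is the minimal polynomial of $\alpha$ over $\kp$, we have $f_{ij}(\alpha)\neq0$ whenever $(i,j)\in\Supp_P(\F)$, so the summand indexed by $(i,j)$ has $y$-order exactly $j$ and contributes $f_{ij}(\alpha)\,V(0)^j=f_{ij}(\alpha)\,P'(\alpha)^j$ to the coefficient of $x^iy^j$ in $\G$. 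If $(i,j)$ is a lattice point of a compact edge $\Lambda$, then $j$ is the least integer with $(i,j)$ in the polytope, so only the index $j'=j$ contributes to $g_{ij}$; therefore $g_{ij}=f_{ij}(\alpha)\,P'(\alpha)^j$ for all lattice points $(i,j)\in\Lambda$ (both sides being $0$ when $(i,j)\notin\Supp_P(\F)$). This is the one step with a genuine risk of error: one must be sure that no ``higher'' contribution with $j'\neq j$ sneaks into the coefficient along the edge, and this is exactly where $\deg f_{ij}<\deg P$, $P$ irreducible and $P$ separable are used.

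Finally I would translate this into separability. Let $\iota:\kp_P\xrightarrow{\ \sim\ }\kp(\alpha)\subset\bar\kp$ be the field isomorphism sending the class of $y$ to $\alpha$, so that $\iota(\bar f_{ij})=f_{ij}(\alpha)$, and apply $\iota$ coefficientwise to $\ff_{P,\Lambda}$. Using Remark~\ref{x=1} to set $x=1$, the identity above gives
$$
\ff_{y,\Lambda}(1,y)=P'(\alpha)^{b_\Lambda}\cdot\iota\!\big(\ff_{P,\Lambda}\big)\!\big(1,\,P'(\alpha)\,y\big),
$$
so $\ff_{y,\Lambda}(1,y)$ is obtained from $\ff_{P,\Lambda}(1,y)$ by applying the field isomorphism $\iota$ to the coefficients, rescaling $y$ by the nonzero constant $P'(\alpha)$, and multiplying by the nonzero constant $P'(\alpha)^{b_\Lambda}$; none of these operations changes squarefreeness of a univariate polynomial. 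Hence $\ff_{P,\Lambda}$ is separable with respect to $y$ iff $\ff_{y,\Lambda}$ is. Combining with the second paragraph shows that $\F$ is non $P$-degenerate iff $\G=\F(x,y+\alpha)$ is non $y$-degenerate; since $\F$ has coefficients in $\kp$, this conclusion is insensitive to the choice of root $\alpha$ (apply $\Gal(\bar\kp/\kp)$), which is precisely the assertion of the lemma.
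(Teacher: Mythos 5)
Your proof is correct and follows essentially the same route as the paper: identify the polytopes via Lemma~\ref{lempol}, establish the coefficient relation $g_{ij}=P'(\alpha)^j f_{ij}(\alpha)$ on the edges, and transport separability through the isomorphism $\kp_P\simeq\kp(\alpha)$ composed with the rescaling $y\mapsto P'(\alpha)y$. You are in fact slightly more careful than the paper at the one delicate point — justifying the coefficient identity only for lattice points on the compact edges, where minimality of $j$ rules out contributions from lower indices — whereas the paper asserts it for all $(i,j)$, which is what is actually needed and true only on the boundary.
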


\begin{proof}
Let $\alpha$ be a root of $P$ and let us write $\F(y+\alpha)=\sum c_{ij} x^i y^j$ for some $c_{ij}\in \bar{\kp}$. A straightforward computation shows that the coefficients in the two expressions are related by
\[\begin{aligned}
c_{ij}=  P'(\alpha)^j f_{ij}(\alpha).
\end{aligned}\]
By Lemma \ref{lempol}, the $y$-edges of $\F(y+\alpha)$ are one-to-one with the $P$-edges of $\F$. Let $\Lambda$ be such an edge. The corresponding $y$-edge polynomial $\ff_{y,\Lambda}$ of $\F(y+\alpha)$ and $P$-edge polyonomial $\ff_{P,\Lambda}$ of $\F$ are related by the formula
$$
\ff_{y,\Lambda}\Big(x,\frac{y}{P'(\alpha)}\Big)=x^{-a_{\Lambda}}y^{-b_{\Lambda}}\sum_{(i,j)\in \Lambda} f_{ij}(\alpha) x^i y^j=\ev_{\alpha}\big(\ff_{P,\Lambda}\big)\in\kp(\alpha)[x,y].
$$
where $\ev_{\alpha}$ is induced be the isomorphism $\kp_P \simeq \kp(\alpha)$ determined by $\alpha$. Since the discriminant of monic polynomials commutes with specialization, we deduce that $\ff_{y,\Lambda}$ is separable with respect to $y$ if and only if $\ff_{P,\Lambda}$ is. 
\end{proof}

\begin{proposition}\label{polytope}
Suppose that $F\in \kp[x,y]$ is non degenerate, then $S=V(\bar{\kp})$.
\end{proposition}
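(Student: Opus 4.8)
The statement is Proposition~\ref{polytope}: if $F\in\kp[x,y]$ is non degenerate, then $S=V(\bar\kp)$. By Corollary~\ref{cor1}, it suffices to prove that the separability order $N$ of $F$ along $x=0$ satisfies $N\le d_x$. Recall that $N=\max_i\lfloor q_i\rfloor$ where $q_i=(r_i+\delta_i)/d_i$ and, by Lemma~\ref{order}(2), when the leading coefficient of $\mathcal F_i$ is $x^{n_i}$ we have $q_i=\val_x\partial_yF(\phi)+(d_y-2)n_i/d_i$ for $\phi$ a root of $\mathcal F_i$. So the plan is to bound each $q_i$ in terms of Newton data of $F$ and exploit that non degeneracy forces the analytic factors to be as ``spread out'' as possible.

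First I would use the dictionary between $P$-polytopes and ordinary Newton polytopes. By definition, $F$ non degenerate means: for every irreducible factor $P$ of $F(0,y)$, $F$ is non $P$-degenerate, and $F$ is non degenerate at infinity. By Lemma~\ref{lempol} and Lemma~\ref{lemdeg}, this translates (over $\bar\kp$) into: for every root $\alpha\in\bar\kp$ of $F(0,y)$, the shifted polynomial $F(x,y+\alpha)$ is non $y$-degenerate, i.e. $y$-convenient with separable $y$-edge polynomials; and similarly at $y=\infty$. The analytic factors $\mathcal F_i$ correspond to the branches of $F=0$ at the points $(0,\alpha)$ (and $(0,\infty)$). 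Fixing such a point and working with $G(x,y):=F(x,y+\alpha)$, whose Newton polytope $\N$ has a single relevant local structure, the irreducible analytic factors of $G$ through the origin are in bijection with the compact edges of $\N$: non degeneracy (separability of the edge polynomials, each edge polynomial being quasi-homogeneous with distinct roots) forces each edge $\Lambda$ of lattice length $\ell_\Lambda$ to split into exactly $\ell_\Lambda$ irreducible analytic factors, and moreover each such factor is \emph{coprime to the others modulo $x$} — this is the standard Newton--Puiseux picture for a non degenerate germ. Concretely, an edge of slope $-a/c$ (in lowest terms) with $\ell$ lattice points contributes $\ell$ analytic factors each of $y$-degree $c$, with ramification index $c$.

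Next comes the core estimate. For such a branch $\mathcal F_i$ coming from an edge $\Lambda$, I would compute $\delta_i=\val_x\Disc_y(\mathcal F_i)$ and $r_i=\val_x\Res_y(\mathcal F_i,\hat{\mathcal F_i})$ directly from the edge geometry: the valuations of the roots of $\mathcal F_i$ are the edge slope (Lemma~\ref{lval}), two distinct roots of $\mathcal F_i$ differ by something of controlled valuation because the edge polynomial has simple roots, and two roots of $\mathcal F_i$ versus $\mathcal F_j$ (with $j\ne i$, $\mathcal F_j$ also through the origin or elsewhere) are \emph{not} congruent mod $x$ by the coprimality-mod-$x$ statement, so $\Res_y(\mathcal F_i,\hat{\mathcal F_i})$ picks up only the ``other branches through $(0,\alpha)$'' and contributes essentially the product of pairwise differences of leading exponents. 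Summing $q_id_i=r_i+\delta_i$ over the branches at a fixed point $(0,\alpha)$ and using $\sum_i q_id_i = \val_x\Disc_y(F)$ (Lemma~\ref{order}(1)) localized at that point, one gets that $\val_x\Disc_y(G)$ equals exactly the Newton number / the sum over edges of the area-type quantity $\sum_\Lambda(\text{width}_x)\cdot(\text{height}_y)$ minus the obvious correction, which is $\le$ (horizontal extent of $\N$ below the relevant portion) $\times$ (something $\le d_y$). The point is that non degeneracy makes every inequality in the chain $N\le \val_x\Disc_y(F)/d \le \dots$ tight enough; a clean way to finish is to show $q_i\le n_i + (\text{horizontal length of }\Lambda)\le d_x$ using $\val_x\partial_yF(\phi)=$ (sum of $\val_x(\phi-\psi)$ over roots $\psi\ne\phi$ of $F$) and that, along a non degenerate edge, these valuations are the slope for the $d_i-1$ siblings and strictly smaller (hence bounded by the slope) for everything else, giving $\val_x\partial_yF(\phi)\le (d_y-1)\cdot(\text{slope})$, and then plugging into Lemma~\ref{order}(2) and bounding the slope and $n_i$ by the width of the polytope, which is $\le d_x$.

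The main obstacle is this last bookkeeping: one has to package the local Newton--Puiseux analysis at each point $(0,\alpha)$ — the fact that a non degenerate edge of horizontal length $w$, vertical length $h$ contributes branches with $q_i\le$ (roughly) $w$, and that the horizontal contributions of distinct edges and distinct points add up to at most $d_x=\deg_xF$ — carefully enough to land at $q_i\le d_x$ rather than $q_i\le d_x d_y$. I would handle the place at infinity by the substitution $y\mapsto 1/y$ exactly as in the definition of non degeneracy at infinity, so that it is covered by the same affine argument applied to $y^{d_y}F(x,1/y)$. Once $N\le d_x$ is established, Corollary~\ref{cor1} gives $S=V(\bar\kp)$ and we are done. $\hfill\square$
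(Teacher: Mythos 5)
Your global strategy is exactly the paper's: reduce via Corollary~\ref{cor1} to showing $q_i\le d_x$ for every analytic branch, pass to $\bar\kp$ and to the local Newton polygons at each point $(0,\alpha)$ via Lemmas~\ref{lempol} and~\ref{lemdeg}, and control $q_i$ through Lemma~\ref{order}(2). But the decisive quantitative step is not actually carried out, and the one inequality you do write down is insufficient. You bound $\val_x\partial_yF(\phi)$ by $(d_y-1)\cdot(\text{slope of the edge of }\phi)$ and then propose to bound the slope by $d_x$; that yields $q_i\lesssim d_x d_y$, not $q_i\le d_x$. The estimate genuinely fails as stated: for $F=(y^2-x^{2k+1})\prod_{i=1}^n(y-c_i)$ with distinct nonzero $c_i$, the branch at the origin has slope $(2k+1)/2$, so $(d_y-1)\cdot(\text{slope})=(n+1)(2k+1)/2\gg d_x=2k+1$, even though the true value $\val_x\partial_yF(\phi)=(2k+1)/2$ is fine. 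You acknowledge this yourself (``the main obstacle is this last bookkeeping \dots to land at $q_i\le d_x$ rather than $q_i\le d_xd_y$''), which is precisely an admission that the key point of the proposition is left open. (A smaller slip: branches at the same point $(0,\alpha)$ are \emph{not} coprime modulo $x$ --- they all reduce to powers of $y-\alpha$; what non degeneracy gives is distinct leading Puiseux coefficients.)

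Your Puiseux-theoretic route can be repaired, but the correct bookkeeping is by edges, not by roots: writing $\val_x\partial_yF(\phi)=\val_x\lc_y(F)+\sum_{\psi\ne\phi}\val_x(\phi-\psi)$, non degeneracy forces $\val_x(\phi-\psi)=\min(\text{slope}_\Lambda,\text{slope}_{\Lambda'})$ when $\psi$ lies on an edge $\Lambda'$ at the same point (and $0$ otherwise), so the $h_{\Lambda'}$ roots of $\Lambda'$ contribute at most $h_{\Lambda'}\cdot(\text{slope}_{\Lambda'})=w_{\Lambda'}$, the horizontal width of $\Lambda'$; summing the widths over all edges at the point gives at most $\val_x F(x,\alpha)\le d_x$. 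The paper avoids this Puiseux analysis entirely (important in positive characteristic): it writes $d_iq_i=\val_x\Disc_y(\F_i)+\sum_{j\ne i}\val_x\Res_y(\F_i,\F_j)$, identifies each term with an intersection multiplicity $(\F_i,\F_j)_0$, and invokes the Bernstein--Khovanskii--Kouchnirenko theorem: non degeneracy turns the inequalities $(\F_i,\F_j)_0\ge[\Delta_i,\Delta_j]$ into equalities, the $\Delta_i$ are triangles with $[\Delta_i,\Delta_j]=\min\{d_ia_j,d_ja_i\}\le d_ia_j$, and $\sum_j a_j=\val_xF(x,0)\le d_x$ closes the argument. Either way, the mixed-volume (or edge-by-edge) computation is the heart of the proposition, and it is exactly the piece missing from your write-up.
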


\begin{proof}
Let as usual $F=u\F_1\cdots \F_s$ be the irreducible factorization of $F$ in $\kp[[x]][y]$. By Corollary \ref{cor1}, it's enough to prove that $q_i\le d_x$ for all $i$. 
By point $(2)$ in Lemma \ref{order}, the $q$-invariant of the irreducible factors of $\F_i$ in $\bar{\kp}[[x]][y]$ are all equal to $q_i$.  Hence, there is no less to suppose that $\kp=\bar{\kp}$. In such a case, Hensel lemma implies that we necessarily have $\F_i(0,y)=(y-\alpha_i)^{d_i}$ for some $\alpha_i\in \bar{\kp}$. Since $q_i$ is invariant under the Moebius transformations $F\leftarrow F(x,y+\alpha_i)$ (or $F\leftarrow y^{d_y}F(x,1/y)$ for $\alpha_i=\infty$), we are reduced to estimate $q_i$ when $\F_i(0,0)=0$. Clearly, $q_i$ only depends on those factors $\F_j$ for which $\F_j(0,0)=0$ so that we can suppose that $F\in \kp[x,y]$ is a product of Weierstrass polynomials which by Lemma \ref{lemdeg} is non $y$-degenerate. By the multiplicative property of the resultant, we have
$$
d_iq_i=\val_x(\Disc_y(\F_i))+\sum_{j\ne i} \val_x \Res_y(\F_i,\F_j).
$$
For given two Weirestrass polynomials $\F,\G\in \kp[[x]][y]$, we have the formula
$$
\val_x \Res_y(\F,\G)=(\F,\G)_0
$$
where $(\F,\G)_0$ stands for the intersection multiplicity
$$
(\F,\G)_0:=\dim_{\kp} \frac{\kp[[x,y]]}{(\F,\G)}
$$
of $\F$ and $\G$ at $(0,0)$, see for instance \citep{Tei} p.28 (the proof adapts to the positive characteristic case). Let us denote by 
$$
\Delta_i:=(\rp^+)^2\setminus \N_{\F_i}
$$ 
the complementary set in $(\rp^+)^2$ of the Newton polytope of $\F_i$. Note that $\Delta_i$ is compact since $\F_i$ is convenient by assumption. By Bernstein-Khovanskii-Kouchnirenko theorem, we have the formula
$$
(\F_i,\F_j)_0\ge [\Delta_i,\Delta_j],
$$
\textit{with equality if the product $\F_i\F_j$ is non $y$-degenerate} (the converse holds in characteristic zero). See for instance Corollary 5.6 in \citep{CN} in the case $\kp=\cp$ or \citep{Kou} for any algebraically closed field. Here,
$$
[\Delta_i,\Delta_j]:=\Vol(\Delta_i+\Delta_j)-\Vol(\Delta_i)-\Vol(\Delta_j)
$$ 
stands for the mixed volume of polytopes. In the same way, we have that 
$$
(\F_i,\partial_y\F_i)_0\ge [\Delta_i,\Delta_i]-d_y,
$$
\textit{with equality if $\F_i$ is non degenerate} (see Theorem 5.6 in \citep{CN}, the proof adapts to the positive characteristic case since $\ff_{\Lambda}$ is assumed to be separable with respect to $y$). Since all $\F_i$'s are irreducible, it follows that
$\Delta_i$ is a triangle with vertices $(0,0),(a_i,0),(0,d_i)$ where $a_i=\val_x(\F_i(x,0))$ (with $a_i=0$ and $\Delta_i$ being a segment if $\F_i$ does not depend on $x$). In such a case, we get that 
$$
[\Delta_i,\Delta_j]=\min\{d_i a_j,d_j a_i\},
$$
see \citep{CN}, Section 5. Hence it follows that 
$$
q_id_i\le \sum_{j=1}^{s} d_i a_j-d_y= d_i \val_x F(x,0)-d_y\le d_i d_x-d_y.
$$
The inequality $q_i\le d_x$ follows. 
\end{proof}


Let $F\in \kp[x,y]$ and suppose given the irreducible factorization $$F(0,y)=\prod_{i=1}^t P_i^{n_i}\in \kp[y].$$
For each $i$, we denote by $s_i$ the total number of irreducible rational factors of the $P_i$-edges polynomials of $F$ and by $\ell_i$ the lattice length of the $P_i$-boundary. Note the inequalities
$$
s_i\le \ell_i \le n_i.
$$
In the same way, denote by $s_{\infty}$ the total number of rational irreducible factors of the edge polynomials of $y^{d_y}F(x,1/y)$ and by $\ell_{\infty}$ the lattice length of its Newton boundary. 

\begin{lemma}\label{lnum}
Suppose given  $F\in \kp[x,y]$ primitive with respect to $y$ and $x$. Then, the respective numbers $s$ and $\bar{s}$ of irreducible analytic factors of $F$ over $\kp$ and $\bar{\kp}$ satisfy
$$
s\le s_{F}:=\sum_{i=1}^t s_i+s_{\infty}\quad {\rm and}\quad \bar{s}\le \bar{s}_{F}:=\sum_{i=1}^t \ell_i\deg P_i+\ell_{\infty},
$$
both inequality being equalities if $F$ is non degenerate along the fiber $x=0$. 
\end{lemma}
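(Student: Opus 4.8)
The plan is to split the count of analytic factors according to the closed point of $\mathbb{P}^1_{\kp}$ over which the corresponding place of the germ $C=\{F=0\}$ along $x=0$ lies. These closed points are exactly the irreducible factors $P_1,\dots,P_t$ of $F(0,y)$ together with the point at infinity, so it suffices to prove, for each $i$, that the number $s^{(i)}$ (resp.\ $\bar{s}^{(i)}$) of irreducible analytic factors of $F$ over $\kp$ (resp.\ over $\bar{\kp}$) whose reduction modulo $x$ is a power of $P_i$ (resp.\ of $y-\alpha$ for a root $\alpha$ of $P_i$) satisfies $s^{(i)}\le s_i$ and $\bar{s}^{(i)}\le \ell_i\deg P_i$, with equalities as soon as $F$ is non $P_i$-degenerate; applying the same argument to $y^{d_y}F(x,1/y)$ at $y=0$ then accounts for the place at infinity and produces the contributions $s_\infty$ and $\ell_\infty$. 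That this splitting is well defined rests on the elementary observation that an analytic factor $\F_k$ with leading coefficient $1$ reduces modulo $x$ to a constant times a power of a single irreducible polynomial of $\kp[y]$ (the $\kp((x))$-Galois group permutes transitively the roots of $\F_k$, hence transitively the constant terms of the corresponding Puiseux series, which are therefore $\kp$-conjugate), while an analytic factor with leading coefficient $x^{n_k}$, $n_k>0$, has all its roots of negative $x$-valuation by Lemma \ref{lval}, hence reduces modulo $x$ to a constant and belongs to the point at infinity. The hypotheses that $F$ be primitive with respect to both $x$ and $y$ ensure that $F(0,y)\ne 0$ and that $y^{d_y}F(x,1/y)$ is again indivisible by $x$, so all the $P$-adic Newton polytopes used below are convenient.

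Next I would fix $i$ and a root $\alpha\in\bar{\kp}$ of $P_i$, and let $F^{(i)}$ be the product of the analytic factors of $F$ over $\kp$ that reduce modulo $x$ to a power of $P_i$. Since the remaining primary parts of $F$, the unit $u$, and the part supported at infinity all reduce modulo $x$ to polynomials coprime to $P_i$ (to constants, in the last two cases), the $P_i$-adic Newton polytope of $F^{(i)}$ coincides with that of $F$; in particular its $P_i$-boundary has lattice length $\ell_i$ and its $P_i$-edge polynomials are the $\ff_{P_i,\Lambda}$. By Lemma \ref{lempol} this polytope equals the Newton polytope of $G:=F^{(i)}(x,y+\alpha)\in\kp_{P_i}[[x]][y]$, and by the computation in the proof of Lemma \ref{lemdeg} the $y$-edge polynomials of $G$ are, up to an invertible rescaling of $y$, the specializations $\ev_\alpha(\ff_{P_i,\Lambda})$, so their $\kp(\alpha)$-irreducible factorizations match the $\kp_{P_i}$-irreducible factorizations of the $\ff_{P_i,\Lambda}$. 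On the analytic side, $s^{(i)}$ equals the number of irreducible factors of $G$ over $\kp_{P_i}[[x]]$ whose reduction modulo $x$ is a power of $y$: the $\bar{\kp}$-branches of $F^{(i)}$ over the $\deg P_i$ conjugate points $(0,\alpha')$ form a single orbit under $\Gal(\bar{\kp}/\kp)$, so the $\kp$-orbits among all of them are in natural bijection with the $\kp(\alpha)$-orbits among the branches over $(0,\alpha)$ alone (a standard fact on induced group actions); the same remark gives $\bar{s}^{(i)}=\deg P_i$ times the number of $\bar{\kp}$-branches of $F^{(i)}$ over $(0,\alpha)$.

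It then remains to establish the purely local statement over a field $L$ (to be applied with $L=\kp_{P_i}$ and with $L=\bar{\kp}$): for $H\in L[[x]][y]$ indivisible by $x$, every irreducible factor of $H$ in $L[[x]][y]$ whose reduction modulo $x$ is a power of $y$ has a single Newton edge, with edge polynomial a power of an $L$-irreducible quasi-homogeneous polynomial, and along each compact edge $\Lambda$ of $H$ the edge polynomial $\ff_{y,\Lambda}(H)$ equals, up to a unit, the product of the $\ff_{y,\Lambda}(\F)$ over the factors $\F$ of $H$ with Newton direction $\Lambda$ (the other factors contributing only constants to it). Consequently the number of such factors with Newton direction $\Lambda$ is at most the number of $L$-irreducible factors of $\ff_{y,\Lambda}(H)$ counted with multiplicity --- which over $L=\bar{\kp}$ is exactly the lattice length of $\Lambda$ --- and it equals that number whenever $\ff_{y,\Lambda}(H)$ is separable. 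Summing over the compact edges of $H$, applying this with $H=G$, and invoking Lemma \ref{lemdeg} to translate the separability of all the $y$-edge polynomials of $G$ into the non $P_i$-degeneracy of $F$, one obtains $s^{(i)}\le s_i$ and $\bar{s}^{(i)}\le \ell_i\deg P_i$, with equalities when $F$ is non $P_i$-degenerate. Adding these over $i=1,\dots,t$ and the infinity contribution gives $s\le s_{F}$ and $\bar{s}\le\bar{s}_{F}$, with equalities when $F$ is non degenerate along $x=0$.

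The step I expect to be the main obstacle is the descent bookkeeping in the second paragraph --- matching the $\kp[[x]]$-factorization of $F^{(i)}$ with the $\kp_{P_i}[[x]]$-factorization of the shifted polynomial $G$ --- since it requires keeping straight which group acts on which set of branches (the $\kp((x))$-Galois group, the subgroup $\Gal(\bar{\kp}/\kp(\alpha))$, or the decomposition group of a rational place) and doing so uniformly, so that the point at infinity, where the place is recorded by the exponent of the leading coefficient $x^{n_k}$ rather than by the constant term of a Puiseux root, is handled on exactly the same footing. Once this is in place, what remains combines the classical Newton--Puiseux description of the factorization of a plane-curve germ along its Newton boundary with the specialization statement already contained in the proof of Lemma \ref{lemdeg}.
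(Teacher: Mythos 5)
Your proof is correct and follows essentially the same route as the paper: decompose the analytic factors by the closed points of the fiber (the $P_i$ and infinity), reduce via Lemmas \ref{lempol} and \ref{lemdeg} to a local Newton-polygon count at each point, and conclude by separability of the edge polynomials in the non degenerate case; the only real difference is that you prove the local ingredients (single edge and power-of-irreducible edge polynomial for an irreducible factor, multiplicativity of edge polynomials, Galois descent between the $\kp[[x]]$- and $\kp_{P_i}[[x]]$-factorizations) that the paper outsources to Cassels and to Lemma 4.10 of Boubakri--Greuel--Markwig. One phrase to fix: the branches over the conjugate points $(0,\alpha')$ do not form a single $\Gal(\bar{\kp}/\kp)$-orbit in general --- it is the set of points that is a single orbit --- but the orbit-counting bijection you actually invoke only needs that, so the argument stands.
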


\begin{proof}
Suppose first $F$ monic w.r.t $y$. By Hensel Lemma, the number of irreducible factors of $F$ in $\kp[[x]][y]$ is the sum  of the numbers of irreducible factors in the local rings $\kp[[x]][y]_{(P_i)}$ when $P_i$ runs over the irreducible factors of $F(0,y)$. Then the assertion for $s$ follows for instance from Chapter 6 in \citep{Cas}. For the absolute case, we consider  the decomposition 
$$
\bar{\kp}[[x]][y]_{(P_i)}=\bigoplus_{P(\alpha)=0}\bar{\kp}[[x,y-\alpha]].
$$ 
By Lemma 4.10 in \citep{BGM}, the number of absolute factors of $F$ in $\bar{\kp}[[x]][y]_{(y-\alpha)}$ is bounded by the lattice length of the Newton boundary of $F(x,y+\alpha)$, which by Lemma \ref{lempol} coincides with $\ell_i$. Moreover, there is equality if $F(x,y+\alpha)$ is non $y$-degenerate, which is the case when $F$ is non degenerate by Lemma \ref{lemdeg}. Summing up over all the roots of $P_i$ and over all $i$, we get the result.  If $F$ is not monic, we conclude in the same way by taking also into account the place at infinity. 
\end{proof}

\begin{remark}
Equalities $s=s_{F}$ and $\bar{s}=\bar{s}_F$ hold in fact with the weaker hypothesis that the edge polynomials are squarefree. 
\end{remark}

\begin{example}
Suppose that 
$$
F(x,y)=y(y^2-2)^{3}-x^{2}(y^{2}-2)+x^5\in \qp[x,y].
$$ 
Then $F(0,y)=y(y^2-2)^{2}$ has two irreducible coprime factors $P_1=y$ and $P_2=y^2-2$. There is a unique $P_1$-edge polynomial  $\ff_{\Lambda,P_1}=-8y+2x^2$, which is obviously separable and irreducible. Hence $s_1=\ell_1=1$. There are two $P_2$-edge polynomials
$$
\ff_1:=\phi y^2 - x^2 \quad {\rm and}\quad \ff_2:=-y + x^3 
$$
where $\phi\in \qp_{P_2}$ stands for the residue class of $y$.  Both polynomials are separable. Since $\phi$ is not a square, $\ff_1$ is irreducible over $\qp_{P_2}$, but has two factors over $\bar{\qp}$. Obviously, $\ff_2$ has exactly $1$ factor over any field. Hence $s_2=1+1=2$ while $\ell_2=2+1=3$. Since there are no points at infinity, we finally get that $F$ has exactly 
$s=s_1+s_2=3$ irreducible factors in $\qp[[x]][y]$ and $\bar{s}=\ell_1+2\ell_2=7$ irreducible factors in $\bar{\qp}[[x]][y]$.
\end{example}

\begin{example}
Suppose that 
$$
F(x,y)=y^6(y^2+1)^{15}-x^{10}(1+y^{21})\in \kp[x,y],
$$ 
where $\kp$ is any field of characteristic $p\ne 2$. Then 
$$
F(0,y)=y^6(y^2+1)^{15}
$$ 
has only two distinct irreducible factors $P_1=y$ and $P_2=y^2+1$. There is a unique $P_1$-edge polynomial  $\ff_{\Lambda,P_1}=y^6-x^{10}$. It is separable with respect to $y$ if and only if $p\ne 2,3$, and we have $s_1=\ell_1=2$ if $p\ne 2$. There is a unique $P_2$-edge polynomial 
$$
\ff_{\Lambda,P_2}=\phi^6 y^{15} -x^{10}(1+\phi^{21})=-y^{15}+(1+\phi)x^{10},
$$
where $\phi$ stands for the residue class of $y$ in $\kp_{P_2}=\kp[y]/(y^2+1)$. This polynomial is separable if and only if $p\ne 2,3,5$ and it is squarefree if and only if $p\ne 2$. For $p\ne 2,3,5$, it has $s_2=2$ irreducible factors over $\kp_{P_2}$ while the lattice length of $\Lambda$ is $\ell_2=5$. Hence, if $p\ne 2,3,5$, then $F$ is non degenerate and has $s=s_1+s_2=4$ factors in $\kp[[x]][y]$ and $\bar{s}=\ell_1+2\ell_2=12$ factors in $\bar{\kp}[[x]][y]$.
\end{example}

\begin{corollary}\label{tpol}
There is a deterministic algorithm that, given $F\in \kp[x,y]$, returns False if $F$ is degenerate and returns the irreducible factors of $F$ over $\kp$ otherwise. The cost of this algorithm is  one univariate factorization of degree at most $d_y$ and 
$$
\sO(d_x d_y s_{F}^{\omega-1}) +\C(d_x)
$$
operations over $\kp$ if $p>d_x(2d_y-1)$ or
$$
\sO(kd_x d_y s_{F}^{\omega-1}) +\sO(k)\C(d_x)
$$ 
operations over $\fp_p$ if $\kp=\fp_{p^k}$. 
\end{corollary}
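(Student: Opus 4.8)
The plan is to make the combinatorial definition of non degeneracy effective, and then, when $F$ turns out to be non degenerate, to run the recombination part of algorithm Critical Factorization with its discriminant step removed. After the reduction of Step~0 of that algorithm we may assume $F$ primitive with respect to $y$. First I would perform the single univariate factorization: $F(0,y)=\prod_{i=1}^{t}P_i^{n_i}\in\kp[y]$ together with, via the substitution $y\mapsto 1/y$, the analogous factorization attached to the place at infinity; this is the only univariate factorization and it has degree at most $d_y$. For each irreducible factor $P$ that occurs I would compute the $P$-adic expansion $F=\sum f_{ij}x^iP^j$, read off the $P$-polytope, its compact edges $\Lambda$ and the edge polynomials $\ff_{P,\Lambda}\in\kp_P[x,y]$, and then --- using Remark~\ref{x=1} to set $x=1$ --- test over $\kp_P=\kp[y]/(P)$ whether $P$ and each $\ff_{P,\Lambda}(1,y)$ are separable with respect to $y$, for instance by a squarefreeness test. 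If $F$ fails to be $P$-convenient for some $P$, or if one of these tests fails, the algorithm returns \textbf{False}: by Lemmas~\ref{lempol} and \ref{lemdeg} this is exactly the situation where $F$ is degenerate.

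Suppose now that $F$ passes the test. Then $F$ is non degenerate, hence in particular separable with respect to $y$, so hypothesis $(H)$ holds and Proposition~\ref{polytope} gives $S=V(\bar\kp)$; consequently $Z=0$ and $V(\kp)=V(\bar\kp)$, since $S\oplus Z=V(\kp)\subset V(\bar\kp)=S$ by Proposition~\ref{NumFac}. I would then run Steps~1--4 and~7 of algorithm Critical Factorization: compute the $(d_x+1)$-truncated analytic factorization of $F$, build and solve the $\fp$-linear system of $\D_a$ (choosing $a$ as in Step~2), intersect the result with $\ker(\N_a)$ when the characteristic is small, obtain $S_0$, and recover the rational factors by the product formula of Step~7. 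That $S_0=S$ follows from Corollary~\ref{cor2} when $p=0$ or $p>d_x(2d_y-1)$, and from the identity $\ker(\D_a)\cap\ker(\N_a)=V(\fp_p)$ combined with $Z=0$ otherwise, both established in the proof of Proposition~\ref{algo1}. The key point is that Steps~5--6 (the valuation of the discriminant and the higher truncated factors) may be skipped: non degeneracy forces the separability order to satisfy $N\le d_x$ by the proof of Proposition~\ref{polytope}, so the discriminant never needs to be computed.

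For the complexity I would reuse the analysis of Proposition~\ref{algo1} with two changes. Dropping Steps~5--6 eliminates the $\sO(d_xd_y^2)$ discriminant term, and by Lemma~\ref{lnum} non degeneracy yields $s=s_F$, so every linear system involved has $s_F$ unknowns and $\sO(d_xd_y)$ equations; computing its reduced echelon kernel costs $\sO(d_xd_ys_F^{\omega-1})$ operations over $\kp$ when $p=0$ or $p>d_x(2d_y-1)$, and $\wt{\sO}(kd_xd_ys_F^{\omega-1})$ operations over $\fp_p$ when $\kp=\fp_{p^k}$ (one operation over $\fp_{p^k}$ costing $\sO(k)$ operations over $\fp_p$, and the Niederreiter step being carried out over $\fp_p$). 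Since $s_F\ge 1$ we have $s_F\le s_F^{\omega-1}$, so the softly linear overheads --- content removal, construction of the linear systems, and recovery of the $F_j$'s --- are absorbed into this bound, and so is the degeneracy test itself: the $P$-adic expansions cost $\wt{\sO}(d_xd_y)$ altogether, while the separability tests over the residue fields $\kp_P$ cost $\wt{\sO}(d_y)$ in total because $\sum_\Lambda\deg_y\ff_{P,\Lambda}\le n_P$ for each $P$ and $\sum_i n_i\deg P_i\le d_y$. The truncated analytic factorization contributes $\C(d_x)$ by definition. Adding everything up yields the announced bounds, together with the single univariate factorization of degree at most $d_y$.

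The step I expect to require the most care is the first one: showing that the degeneracy test really fits inside the stated budget and, more importantly, that it detects degeneracy in all of its forms --- a non $P$-convenient $F$, an inseparable edge polynomial, and a polynomial with a repeated analytic branch (which always surfaces as an inseparable edge polynomial after the translation of Lemma~\ref{lemdeg}, as $F=(y^2-x)^2$ already illustrates) --- so that passing the test genuinely entails hypothesis $(H)$. Everything after that is routine bookkeeping: bounding the $P$-adic expansions and the residue-field squarefreeness tests uniformly in the degrees $\deg P_i$ and the multiplicities $n_i$, and tracking the factor $k$ through the small-characteristic linear algebra.
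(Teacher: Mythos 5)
Your proposal is correct and follows essentially the same route as the paper: test separability of the $P$-edge polynomials after one univariate factorization of $F(0,y)$, then use Lemma~\ref{lnum} to get $s=s_F$ and Proposition~\ref{polytope} to get $N\le d_x$, so that the Critical Factorization machinery runs with $q=d_x$ and the discriminant/higher-truncation steps are skipped. Your explicit observation that non degeneracy forces hypothesis~$(H)$ (a repeated analytic branch always produces an inseparable edge polynomial) is a point the paper leaves implicit, but it does not change the argument.
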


\begin{proof}
The algorithm is as follows. We first compute the factorization $F(0,y)=\prod_{i=1}^{t} P_i^{n_i}$. For each $i$, we test the separability of $P_i$ and of all the $P_i$-edges polynomials of $F$. This step costs at most $\sO(d_x d_y)$ operations over $\kp$. If $F$ is non degenerate along the fiber $x=0$, then $s=s_F$ by Lemma \ref{lnum} and the separability order satisfies $N\le d_x$ by Proposition \ref{polytope}. Hence we can take $q=d_x$ in Theorem \ref{t2}. Corollary \ref{tpol} follows.
\end{proof}

\begin{remark}
Analytic factorization of non degenerate polynomials may be reduced to Newton iteration in the rings $\kp_P[[x]][y]$ after some translations and monomial change of coordinates. A first estimate leads in that case to $\C(d_x)\subset \sO(s_F d_xd_y^2)$, but we believe we can do better. One of the main obstruction is the difficulty to use a dichotomic multifactor Hensel lifting as in the regular case. 
\end{remark}

\begin{remark}
A cheap pretreatment of $F$ is to look at the fibers $x=0$ and $x=\infty$ (or $y=0$ and $y=\infty$ by reversing the roles played by $x$ and $y$) in order to check if there is a fiber over which $F$ is non degenerate and with the smallest $s_F$ or $\bar{s}_F$ as possible. If we take for instance 
$F(x,y)=y^6(y-1)^{15}-x^{10}+x^{9}y^{21}$, then the fiber $x=0$ leads to $s_F=4$ and $\bar{s}_F=7$ while the fiber $x=\infty$ would lead to $s=\bar{s}=1$ from which we immediately deduce that $F$ is absolutely irreducible, whatever the field is. This kind of strategies based on the relations beewteen the (global) Newton polytope and bivariate factorization have already been considered in the litterature, see for instance \citep{Gao2} or \citep{W2} and the references therein. 
\end{remark}

\section{Conclusion}\label{S8}

When compared to the regular case, a great advantage of working along a critical fiber for factorization is that one has in general less analytic factors to recombine (always strictly less in the absolute case $\kp=\bar{\kp}$). Unfortunately, this might require in general a higher truncated precision, and our main Theorem \ref{t1} seems to suggest a bad worst case complexity. However, the important classes of non degenerate polynomials and locally irreducible polynomials illustrate that we sometimes gain in complexity when compared to the regular case. Might this hold in all generality ? We discuss briefly the two main obstructions for this.

\subsection{Fast analytic factorization ?}

The strength of our approach deeply relies on the complexity of $N$-truncated analytic factorization. This is a crucial problem in singularity theory. One approach in characteristic zero or $>d_y$ is to compute the rational Puiseux series. There are well known algorithms for this, with actual complexity $\mathcal{O}(d^5)$ in terms of the total degree $d$ of $F$ in the case of finite fields \citep{PR}. This complexity is a bit too large for our purpose, but the authors told us that they recently developped  a $\mathcal{O}(d^4)$ complexity algorithm. This is the same complexity as for absolute factorization. In order to fit also in the rational factorization complexity class, we would need $\sO(d^{\omega+1})$ for analytic factorization. This is an open problem.

Note that there exists algorithms for testing local analytic irreducibility of a germ of curve that do not require the use of Puiseux series. The main ingredient is that of \textit{approximate roots} and makes essentialy use of resultants computations \citep{Ab}, \citep{kuo}, \citep{Cos}. Up to our knowledge, no complexity analysis have been done yet. We don't know if approximate roots could lead to fast analytic factorization that would avoid Puiseux series computations.

In characteristic $p<d_y$, the concept of Puiseux series does not make sense and analytic factorization is a much more delicate problem. See for instance \citep{ked} for the generalization of Puiseux series in small positive characteristic.

\subsection{Fast recombinations ?}

Even if we get a fast algorithm for computing the $N$-truncated analytic factors, our brute force analysis of the recombination problem in this paper led to a complexity $\sO(Nd_y r^{\omega-2})\subset \sO(d^{\omega+2})$. However, the  polynomials 
$
\big[ \hat{\F_i}\partial_y \F_i\big]^{N+1}
$ 
we want to recombine have a very particular structure. Namely, they vanish with very high order at some points of the curve $F=0$. An approach could be to write these polynomials in a basis constitued of adjoint polynomials. The number of required equations for solving recombinations (divisibility test by $F$ or closedness of differential forms) would then decreaze. This fact is illutrated in some of our previous works, where we studied the relations beetween resolution of singularities and factorization \citep{W1} or toric genometry and factorization \cite{W2}. Namely we developed factorization algorithms whose linear algebra steps belongs to $\sO(p_a s^{\omega-1})$, with $p_a\le d^2$ being the arithmetic genus of the strict transform of the curve $F=0$ after some sequences of blowing-ups.

\end{document}